\pgfplotsset{compat=newest}
\definecolor{teal}{rgb}{0.0, 0.5, 0.5}
\newcounter{mnotecount}[section]
\newcommand{\rmnote}[1]{}
\DeclareFontFamily{U}{mathb}{\hyphenchar\font45}
\DeclareFontShape{U}{mathb}{m}{n}{
      <5> <6> <7> <8> <9> <10> gen * mathb
      <10.95> mathb10 <12> <14.4> <17.28> <20.74> <24.88> mathb12
      }{}
\DeclareSymbolFont{mathb}{U}{mathb}{m}{n}
\theoremstyle{plain}
\newtheorem*{theorem*}{Theorem}
\newtheorem{theorem}{Theorem}[section]
\newtheorem*{lemma*}{Lemma}
\newtheorem{lemma}[theorem]{Lemma}
\newtheorem*{assumption*}{Assumption}
\newtheorem*{proposition*}{Proposition}
\newtheorem{proposition}[theorem]{Proposition}
\newtheorem*{corollary*}{Corollary}
\newtheorem{corollary}[theorem]{Corollary}
\newtheorem*{claim*}{Claim}
\newtheorem*{conjecture*}{Conjecture}
\newtheorem{question}[theorem]{Question}
\newtheorem*{question*}{Question}
\newtheorem*{result*}{Result}
\theoremstyle{definition}
\newtheorem*{definition*}{Definition}
\newtheorem{definition}[theorem]{Definition}
\newtheorem*{example*}{Example}
\newtheorem{example}[theorem]{Example}
\newtheorem*{algorithm*}{Algorithm}
\newtheorem*{remark*}{Remark}
\newtheorem*{remarks*}{Remarks}
\newtheorem{remark}[theorem]{Remark}
\newtheorem*{convention*}{Convention}
\Crefname{l}{Lemma}{Lemmas}    
\Crefname{p}{Proposition}{Propositions}
\Crefname{t}{Theorem}{Theorems}
\Crefname{c}{Corollary}{Corollaries}
\Crefname{r}{Remark}{Remarks}
\Crefname{d}{Definition}{Definitions}
\Crefname{e}{Example}{Examples}
\Crefname{q}{Question}{Questions}
\numberwithin{equation}{section}
\def\al{\alpha}
\def\ga{\gamma}
\def\de{\delta}
\def\ep{\epsilon}
\def\la{\lambda}
\def\si{\sigma}
\def\vh{\varphi}
\def\ps{\psi}
\def\Ga{\Gamma}
\def\La{\Lambda}
\def\Om{\Omega}
\def\C{\mathbb{C}}
\def\I{\mathbb{I}}
\def\N{\mathbb{N}}
\def\R{\mathbb{R}}
\def\Z{\mathbb{Z}}
\def\cA{\mathcal{A}}
\def\cE{\mathcal{E}}
\def\cH{\mathcal{H}}
\def\cL{\mathcal{L}}
\def\cP{\mathcal{P}}
\def\p{\partial}
\def\aa{\mathsf{a}}
\def\bb{\mathsf{b}}
\def\cc{\mathsf{c}}
\def\II{\mathbf{I}}
\def\sol{\mathcal{S}}
\def\eig{\mathcal{E}}
\def\<{\langle}
\def\>{\rangle}
\renewcommand{\o}{\circ}
\def\ol{\overline}
\def\ul{\underline}
\def\Hyp{\on{Hyp}}
\let\on=\operatorname
\newcommand{\sr}[1]%
{\ifmmode{}^\dagger\else${}^\dagger$\fi\ifvmode
\vbox to 0pt{\vss
 \hbox to 0pt{\hskip\hsize\hskip1em
 \vbox{\hsize3cm\raggedright\pretolerance10000
 \noindent #1\hfill}\hss}\vss}\else
 \vadjust{\vbox to0pt{\vss%
 \hbox to 0pt{\hskip\hsize\hskip1em%
 \vbox{\hsize3cm\raggedright\pretolerance10000%
 \noindent #1\hfill}\hss}\vss}}\fi%
}
\providecommand{\mapsfrom}{\kern.2em%
\setbox0=\hbox{$\leftarrow$\kern-.10em\rule[0.26mm]{0.1mm}{1.3mm}}\box0%
\kern.3em}
\title{Continuity of the solution map for hyperbolic polynomials}
\author[Adam Parusi\'nski and  Armin Rainer]
{Adam Parusi\'nski and Armin Rainer}
\address {Adam Parusi\'nski: Universit\'e C\^ote d'Azur,  CNRS,  LJAD, UMR 7351, 06108 Nice, France}
\email{adam.parusinski@univ-cotedazur.fr}
\address{Armin Rainer: Faculty of Mathematics and Geoinformation,
    Institute for Statistics and Mathematical Methods in Economics, E105-04,
TU Wien, Wiedner Hauptstraße 8, 1040 Vienna, Austria}
\email{armin.rainer@tuwien.ac.at}
\begin{document}

\begin{abstract}
    Hyperbolic polynomials are monic real-rooted polynomials.
    By Bronshtein's theorem, the increasingly ordered roots of a hyperbolic polynomial of degree $d$ with 
    $C^{d-1,1}$ coefficients are locally Lipschitz and the solution map ``coefficients-to-roots'' is bounded. 
    We prove continuity of this solution map from 
    hyperbolic polynomials of degree $d$ with $C^d$ coefficients to their increasingly ordered roots 
    with respect to the $C^d$ structure on the source space and the Sobolev $W^{1,q}$ structure, for all $1 \le q<\infty$,
    on the target space. Continuity fails for $q=\infty$.
    As a consequence, we obtain continuity of the local surface area of the roots 
    as well as local lower semicontinuity of the area of the zero sets of hyperbolic polynomials.
    We also discuss applications for the eigenvalues of Hermitian matrices and singular values. 
\end{abstract}

\thanks{This research was funded 
    in part by the Austrian Science Fund (FWF) DOI 10.55776/P32905 and DOI 10.55776/PAT1381823.
For open access purposes, the authors have applied a CC BY public copyright license to any author-accepted manuscript version arising from this submission.}
\keywords{Hyperbolic polynomials, regularity of the roots, continuity of the solution map, Bronshtein's theorem, Tschirnhausen (trans-)form, continuity of surface area,
eigenvalues of Hermitian matrices, singular values}
\subjclass[2020]{
    26C05,   
    26C10,   
    26A16,   
    30C15,   
    46E35,   
    47A55,   
47H30}   
\date{\today}

\maketitle

\setcounter{tocdepth}{1}
\tableofcontents

\clearpage

\section{Introduction}

Determining the optimal regularity of the roots of polynomials whose coefficients depend smoothly on parameters 
is a much studied problem with a long history.
It has important applications in various fields such as partial differential equations and perturbation theory.

The subject started with Rellich's work \cite{Rellich37} on the analytic perturbation theory of linear operators. 
Bronshtein \cite{Bronshtein80} proved Gevrey well-posedness of the hyperbolic Cauchy problem 
with multiple characteristics 
using his result \cite{Bronshtein79} on the Lipschitz continuity of the roots of hyperbolic polynomials. 
Spagnolo \cite{Spagnolo00},  
motivated by his analysis of certain systems of pseudo-differential equations, 
conjectured that the roots of smooth curves of (not necessarily hyperbolic) polynomials admit absolutely continuous parameterizations. 
This conjecture was proved and the optimal Sobolev regularity of the roots was established in a series 
of papers by Parusi\'nski and Rainer \cite{ParusinskiRainerAC,ParusinskiRainer15,Parusinski:2020aa}, 
after the optimal result for radicals had been obtained by 
Ghisi and Gobbino \cite{GhisiGobbino13}. 
For a more comprehensive account of the history of the problem and its ramifications, e.g., in the perturbation theory 
of linear operators, we refer to the recent survey article \cite{Parusinski:2023ab}.

In this paper, we focus on the class of monic \emph{hyperbolic polynomials} for which the regularity problem has a special flavor; 
the general case of monic complex polynomials is treated in \cite{Parusinski:2024ab}. 
A monic real polynomial of degree $d$ is called \emph{hyperbolic} if all its $d$ roots (counted with multiplicities) are real.  
Hyperbolic polynomials appear naturally as the characteristic polynomials of Hermitian matrices for instance.  

We refer by \emph{Bronshtein's theorem} to the statement that  
any continuous system of the roots of a $C^{d-1,1}$ family of hyperbolic polynomials of degree $d$ is 
actually locally Lipschitz continuous (i.e.\ $C^{0,1}$). In general, this is optimal.
Bronshtein \cite{Bronshtein79} originally proved a version for $C^d$ curves of hyperbolic polynomials of degree $d$ 
(under these assumptions the roots can be represented by differentiable functions).
Bronshtein's rather dense proof is hard to follow.
Wakabayashi \cite{Wakabayashi86} gave a complex analytic proof 
of a more general H\"older version of Bronshtein's theorem,
which had been announced by Ohya and Tarama \cite{OhyaTarama86}; 
it was later proved by Tarama \cite{Tarama06} following Bronshtein's original approach.
Kurdyka and P\u{a}unescu \cite{KurdykaPaunescu08} used resolution of singularities to deduce
local Lipschitz continuity of the roots of hyperbolic polynomials with real analytic coefficients.
A simple proof of Bronshtein's theorem, based on the splitting principle, 
which also established 
explicit uniform bounds for the Lipschitz constants of the roots in terms of the 
$C^{d-1,1}$ norms of the coefficients, was given by Parusi\'nski and Rainer \cite{ParusinskiRainerHyp}. 
We will recall this version in \Cref{thm:Bronshtein}. 

Bronshtein's theorem gives rise to
a bounded \emph{solution map} that takes hyperbolic polynomials of degree $d$ with $C^{d-1,1}$ coefficients to $C^{0,1}$ systems of their roots. 
This will be made precise below.

The purpose of this paper is to investigate the continuity of the solution map
and thus answer a question of Antonio Lerario.

\subsection{Hyperbolic polynomials and the solution map}

A monic polynomial of degree $d$,
\[
    P_{\aa}(Z)= Z^d + \sum_{j=1}^d a_{j}Z^{d-j} \in \R[Z],
\]
is called \emph{hyperbolic} if all its $d$ roots are real.
In the following, we will identify the polynomial $P_{\aa}$ with its 
coefficient vector $\aa=(a_1,\ldots,a_d) \in \R^d$. 
Then the set of all hyperbolic polynomials of degree $d$ is identified with the image of 
the map $\si = (\si_1,\ldots,\si_d) : \R^d \to \R^d$, where 
\[
    \si_j(x_1,\ldots,x_d) = (-1)^j \sum_{i_1<\cdots < i_j} x_{i_1}\cdots x_{i_j}
\]
is the $j$-th elementary symmetric function (up to sign). 
By the Tarski--Seidenberg theorem, $\si(\R^d)$ is 
a closed semialgebraic subset of $\R^d$ which we equip with the trace topology.
We denote this space by $\on{Hyp}(d)$ and call it the \emph{space of hyperbolic polynomials of degree $d$}.

For $\aa \in \on{Hyp}(d)$, let
$\la_1^\uparrow(\aa) \le \cdots \le \la_d^\uparrow(\aa)$ denote the increasingly ordered 
roots of $P_{\aa}$. 
Then 
\begin{equation*} 
    \la^\uparrow = (\la_1^\uparrow,\ldots,\la_d^\uparrow) : \on{Hyp}(d) \to \R^d
\end{equation*}
is a continuous map, see \cite[Lemma 4.1]{AKLM98} or, alternatively, \cite[Lemma 6.4]{Parusinski:2024ab} 
combined with \Cref{lem:unordered}.

Let $U \subseteq \R^m$ be open. 
Let $C^{d-1,1}(U,\on{Hyp}(d))$ denote the space of all $C^{d-1,1}$ maps $\aa : U \to \R^d$ such that 
$\aa(U) \subseteq \on{Hyp}(d)$. 
Thus $\aa \in C^{d-1,1}(U,\on{Hyp}(d))$ amounts to a hyperbolic polynomial $P_{\aa}$ of degree $d$ whose coefficients are 
$C^{d-1,1}$ functions defined on $U$.
We equip $C^{d-1,1}(U,\on{Hyp}(d))$ with the trace topology of the natural Fr\'echet topology on $C^{d-1,1}(U,\R^d)$. 
Note that $C^{d-1,1}(U,\on{Hyp}(d))$ is a closed nonlinear subset of $C^{d-1,1}(U,\R^d)$.
Then Bronshtein's theorem (see \Cref{thm:Bronshtein}) implies that the \emph{solution map}
\begin{equation} \label{eq:solmapLip}
   \sol := (\la^\uparrow)_* : C^{d-1,1}(U,\on{Hyp}(d)) \to C^{0,1}(U,\R^d), \quad \aa \mapsto \la^\uparrow \o \aa,
\end{equation}
is well-defined and bounded (i.e., it maps bounded sets to bounded sets).

\subsection{The main results}

We will see in \Cref{ex:Br-continuity} that the solution map 
$\sol : C^{d-1,1}(U,\on{Hyp}(d)) \to C^{0,1}(U,\R^d)$ is \emph{not} continuous: 
the natural topology on the target $C^{0,1}(U,\R^d)$ is too strong.

However, the solution map $\sol$ becomes continuous if we restrict it to $C^{d}(U,\on{Hyp}(d))$, 
carrying the trace topology of the natural Fr\'echet topology on $C^{d}(U,\R^d)$,
and relax the topology on the target space:
for $1 \le q < \infty$, let $C^{0,1}_q(U,\R^d)$ denote the set $C^{0,1}(U,\R^d)$ 
equipped with the trace topology of the inclusion in Sobolev space 
$C^{0,1}(U,\R^d) \to W^{1,q}_{\on{loc}}(U,\R^d)$. See \Cref{sec:spaces} for precise definitions of the function spaces.

The following theorem, which is our main result, solves Open Problem 3.8 in \cite{Parusinski:2023ab}.

\begin{theorem} \label[t]{thm:solmap}
    Let $U \subseteq \R^m$ be open. 
    The solution map    
    \begin{equation*} 
        \sol : C^{d}(U,\on{Hyp}(d)) \to C^{0,1}_q(U,\R^d), \quad \aa \mapsto \la^\uparrow \o \aa,
    \end{equation*}
    is continuous, for all $1\le q<\infty$.
\end{theorem}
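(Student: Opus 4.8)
\emph{Proof plan.}
I would fix a compact $K\subseteq U$ and a sequence $a_n\to a$ in $C^d(U,\Hyp(d))$ and show $\lambda^\uparrow\circ a_n\to\lambda^\uparrow\circ a$ in $W^{1,q}(K,\R^d)$. The uniform convergence $\lambda^\uparrow\circ a_n\to\lambda^\uparrow\circ a$ on $K$ is immediate from the continuity of $\lambda^\uparrow$, so the real issue is the convergence of the gradients. By Bronshtein's theorem (\Cref{thm:Bronshtein}) there is $L$, depending only on $\sup_n\|a_n\|_{C^d(K')}<\infty$ for a compact neighbourhood $K'$ of $K$, with $\|\nabla(\lambda^\uparrow_j\circ a_n)\|_{L^\infty(K)},\,\|\nabla(\lambda^\uparrow_j\circ a)\|_{L^\infty(K)}\le L$; since then $\int_K|g_n-g|^q\le(2L)^{q-1}\int_K|g_n-g|$, it suffices to treat $q=1$, i.e.\ to prove $\nabla(\lambda^\uparrow_j\circ a_n)\to\nabla(\lambda^\uparrow_j\circ a)$ in $L^1(K)$. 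This cannot be obtained by dominated convergence from a pointwise statement: on the open set where $P_{a(\cdot)}$ has only simple roots, $\lambda^\uparrow\circ a$ is $C^d$ by the implicit function theorem and $a_n\to a$ in $C^d_{\mathrm{loc}}$ forces $C^d_{\mathrm{loc}}$-convergence of the roots (hence of their gradients), but the collision locus of $a$ may have positive measure, and that is where the content lies.

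Next I would normalise and induct on $d$. The $C^d$ substitution $Z\mapsto Z-\tfrac1d a_1$ only shifts all roots by the $C^d$-convergent family $\tfrac1d a_1$, so one may assume $a$ and every $a_n$ is centred: $a_1\equiv0$, whence $a_2\le0$ on $U$ and $\sum_j(\lambda^\uparrow_j\circ a)^2=-2a_2$, so $|\lambda^\uparrow_j\circ a|\le\sqrt{-2a_2}$ pointwise. The case $d=1$ is trivial. For the inductive step, fix $\delta>0$ and split $K=A_\delta\cup B_\delta$ with $A_\delta:=\{t\in K:\lambda^\uparrow_d(a(t))-\lambda^\uparrow_1(a(t))\ge\delta\}$ (compact). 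Near any point of $A_\delta$ the roots of $P_a$ have a gap of size $\ge\delta/(d-1)$, so $P_a$ factors into a product of two hyperbolic polynomials of degrees $<d$ with $C^d$ coefficients — the increasingly ordered roots of $P_a$ being the ordered roots of the factors — and $P_{a_n}$ factors accordingly with $C^d$-convergence; the inductive hypothesis then gives $L^1$-convergence of the root gradients near each such point, and a finite subcover yields $\int_{A_\delta}|\nabla(\lambda^\uparrow_j\circ a_n)-\nabla(\lambda^\uparrow_j\circ a)|\to0$ for each fixed $\delta$. Setting $E:=\bigcap_{\delta>0}B_\delta=\{t\in K:a_2(t)=0\}=\{t\in K:P_{a(t)}=Z^d\}$ one has $|B_\delta\setminus E|\to0$, hence $\int_{B_\delta\setminus E}|\nabla(\lambda^\uparrow_j\circ a_n)-\nabla(\lambda^\uparrow_j\circ a)|\le 2L\,|B_\delta\setminus E|\to0$ as $\delta\to0$, uniformly in $n$; and since $\lambda^\uparrow_j\circ a\equiv0$ on $E$ its gradient vanishes a.e.\ there. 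Letting first $n\to\infty$ and then $\delta\to0$, the theorem therefore reduces to the claim $\int_E|\nabla(\lambda^\uparrow_j\circ a_n)|\to0$.

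This last claim is what I expect to be the main obstacle. On $E$ the limit $a$ vanishes; moreover $a_2\le0$ attains its maximum $0$ along $E$ and $|a_k|\le C_d(-a_2)^{k/2}$, so Taylor expansion forces $D^\beta a_k=0$ on $E$ for all $|\beta|\le k-1$ — in particular $\nabla a_2=0$ on $E$ and, at every Lebesgue density point $t_0$ of $E$, also $D^2a_2(t_0)=0$ (otherwise $-a_2$ would be comparable near $t_0$ to a nonzero positive semidefinite quadratic form, forcing $E$ to avoid a cone at $t_0$ and contradicting density). Hence $D^\beta a_{n,k}\to0$ uniformly on $E$ for $|\beta|\le k-1$, and $D^2a_{n,2}\to0$ on a ball around a.e.\ point of $E$. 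For $d=2$, where $\lambda^\uparrow_2\circ a_n=\sqrt{-a_{n,2}}$, this finishes: Glaeser's inequality $|\nabla\sqrt{-a_{n,2}}(t)|^2\le\tfrac12\|D^2a_{n,2}\|_{L^\infty(B(t,r))}$, valid once $-a_{n,2}(t)\le\tfrac12\|D^2a_{n,2}\|_{L^\infty(B(t,r))}r^2$, applied at a.e.\ $t\in E$ with $r$ so small that $\|D^2a_{n,2}\|_{L^\infty(B(t,r))}\to0$ (admissible since $-a_{n,2}(t)\to0$), gives $\nabla\sqrt{-a_{n,2}}\to0$ a.e.\ on $E$, hence in $L^1(E)$ by the uniform bound. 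For $d\ge3$ the plan is to run the analogous argument after the desingularization $Z=\sqrt{-a_{n,2}}\,W$: on $\{a_{n,2}<0\}$ it turns $P_{a_n}$ into a hyperbolic polynomial $\hat P_n$ with bounded coefficients $\hat a_{n,k}=a_{n,k}(-a_{n,2})^{-k/2}$ and $\hat a_{n,2}=-1$, so that $\lambda^\uparrow_j\circ a_n=\sqrt{-a_{n,2}}\,(\lambda^\uparrow_j\circ\hat a_n)$ and $|\nabla(\lambda^\uparrow_j\circ a_n)|\le\sqrt2\,|\nabla\sqrt{-a_{n,2}}|+\sqrt{-a_{n,2}}\,|\nabla(\lambda^\uparrow_j\circ\hat a_n)|$; the first term is handled as for $d=2$, and the second by $\|\sqrt{-a_{n,2}}\|_{C^0(E)}\to0$ together with a uniform bound on $\|\nabla(\lambda^\uparrow_j\circ\hat a_n)\|_{L^1(E)}$ coming from \Cref{thm:Bronshtein} applied to $\hat a_n$. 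The genuinely delicate points — where the finer estimates behind \Cref{thm:Bronshtein} are really needed — are (i) the regularity of the quotients $a_{n,k}(-a_{n,2})^{-k/2}$ near $E$, which must be controlled uniformly in $n$ via the vanishing of the $D^\beta a_{n,k}$ ($|\beta|\le k-1$), and (ii) the fact that $E$ may have positive-measure topological boundary, at which $D^2a_2$ is small on no ball, so the Glaeser scale $r_n(t)$ must be chosen in balance with the local geometry of $E$ (or that exceptional part of $E$ shown to carry a negligible share of the integral). Granting the claim, assembling the three estimates closes the induction and proves the theorem; the failure of continuity for $q=\infty$ is a separate matter, exhibited by an explicit example in the spirit of \Cref{ex:Br-continuity}.
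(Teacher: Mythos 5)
Your overall architecture is closer to the paper's than you seem to realize. You dismiss dominated convergence at the outset, but the argument you then run on the coincidence locus $E=\{a_2=0\}$ is itself a dominated-convergence argument: you aim to show $\nabla(\lambda^\uparrow_j\circ a_n)\to0$ a.e.\ on $E$ and then integrate using the uniform Lipschitz bound. The paper does exactly that, globally: it proves (\Cref{thm:ptwhyp}) that $(\lambda^\uparrow\circ a_n)'\to(\lambda^\uparrow\circ a)'$ a.e., and applies DCT. Away from $Z_{\tilde a_2}$ the paper factorizes via an analytic ``simultaneous splitting'' and inducts on $d$, matching your $A_\delta$ argument (your extra $B_\delta\setminus E$ slab is an artefact of passing through $L^1$-on-pieces instead of pointwise-a.e.; the paper dispenses with it). On $Z_{\tilde a_2}$ the paper's \Cref{lem:zeroset} is precisely your ``main claim'' $\int_E|\nabla(\lambda^\uparrow_j\circ a_n)|\to0$. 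Your density-point/Rolle observations ($\nabla a_2=0$ on $E$, $D^2a_2=0$ at density points, $D^\beta a_k=0$ for $|\beta|\le k-1$) and your Glaeser treatment of $d=2$ are all correct and mirror the paper's use of \Cref{lem:Glaeser}. Also, the paper reduces $m\ge2$ to $m=1$ by Fubini and only develops the hard estimates in one variable, which is cleaner than carrying gradients throughout.

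The genuine gap is exactly where you flag it, at $d\ge3$, and it is not a small one. Your desingularization writes $\lambda^\uparrow_j\circ a_n=\sqrt{-a_{n,2}}\,(\lambda^\uparrow_j\circ\hat a_n)$ with $\hat a_{n,k}=a_{n,k}(-a_{n,2})^{-k/2}$ and then invokes Bronshtein for $\hat a_n$ to get a uniform-in-$n$ bound on $\nabla(\lambda^\uparrow_j\circ\hat a_n)$. But that invocation presupposes a uniform-in-$n$ bound on $\|\hat a_n\|_{C^{d-1,1}}$ on balls of a fixed scale near $E$, and none of the vanishing $D^\beta a_{n,k}(t_0)=0$ ($|\beta|\le k-1$) you derive delivers this: those derivatives vanish in the limit but only converge to $0$ for $a_n$, while the denominators $(-a_{n,2})^{k/2}$ degenerate at the same rate, so the quotients are $0/0$-type and their higher derivatives must be controlled by quantitatively coupling the smallness of $D^\beta a_{n,k}$ to the size of $|a_{n,2}(t_0)|^{1/2}$ on balls of radius $\asymp |a_{n,2}(t_0)|^{1/2}$. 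In addition, if $a_{n,2}(t_0)=0$ at some $t_0\in E$ the expression $\hat a_n(t_0)$ is undefined, and if $a_{n,2}$ is nonzero but extremely small one gets no uniform scale on which Bronshtein applies. This is precisely what the paper's \Cref{thm:ptwkey} (the pointwise Bronshtein theorem) is built to do: it quantifies the admissible scale $\delta$, feeds the rescaled coefficients through the ``$C^{d-1,1}$-admissibility'' machinery (\Cref{lem:ptwass>adm}, \Cref{lem:Aconsequence}, \Cref{prop:indpt}), and only then closes a recursion on degree. Without some version of that single-point, quantitatively scaled estimate, the claim $\int_E|\nabla(\lambda^\uparrow_j\circ a_n)|\to0$ for $d\ge3$ does not follow from your ingredients; all of \Cref{sec:Bronshtein} is devoted to filling exactly this hole. (By contrast, your worry (ii) about boundary points of $E$ is not a real obstruction: continuity of $D^2a_2$ plus $D^2a_2(t_0)=0$ at density points $t_0$ suffices for your Glaeser step, and non-density points of $E$ carry no measure.)
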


As a corollary, we find that the solution map on $C^{d}(U,\on{Hyp}(d))$ is continuous 
into the H\"older space $C^{0,\al}(U,\R^d)$, carrying its natural topology,
for all $0<\al<1$.

\begin{corollary} \label[c]{cor:solmap}
    Let $U \subseteq \R^m$ be open. 
    The solution map    
    \begin{equation*} 
        \sol : C^{d}(U,\on{Hyp}(d)) \to C^{0,\al}(U,\R^d), \quad \aa \mapsto \la^\uparrow \o \aa,
    \end{equation*}
    is continuous, for all $0< \al<1$, but not for $\al=1$.
\end{corollary}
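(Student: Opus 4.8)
The plan is to derive \Cref{cor:solmap} from \Cref{thm:solmap} together with the uniform Lipschitz bounds of Bronshtein's theorem (\Cref{thm:Bronshtein}, equivalently the boundedness of the solution map \eqref{eq:solmapLip}), by a soft interpolation argument. Since $C^{d}(U,\on{Hyp}(d))$ is a closed subset of the Fréchet space $C^d(U,\R^d)$ and $C^{0,\al}(U,\R^d)$ is Fréchet, both are metrizable, so it suffices to check sequential continuity: given $a_n \to a$ in $C^{d}(U,\on{Hyp}(d))$, set $f_n := \la^\uparrow \o a_n$ and $f := \la^\uparrow \o a$; we must show $\|f_n - f\|_{C^{0,\al}(K)} \to 0$ for every compact $K \subseteq U$.

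Next I would assemble two ingredients. (i) \emph{Local uniform convergence.} Applying \Cref{thm:solmap} with any fixed $q>m$ gives $f_n \to f$ in $W^{1,q}_{\on{loc}}(U,\R^d)$; since every compact $K$ is contained in an open ball $B$ with $\ol B\subseteq U$ and $W^{1,q}(B)\hooklongrightarrow C^0(\ol B)$ for $q>m$ (Morrey's inequality), this yields $\|f_n - f\|_{C^0(K)}\to 0$. (Alternatively one may use only the $L^1_{\on{loc}}$-convergence from \Cref{thm:solmap} together with (ii) and the Arzel\`a--Ascoli theorem.) (ii) \emph{Uniform Lipschitz bound.} Fix $K$ and pick $r>0$ such that the closed $2r$-neighborhood $L:=\{x:\dist(x,K)\le 2r\}$ lies in $U$. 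Because $a_n\to a$ in $C^d$, the derivatives up to order $d$ converge uniformly on the compact set $L$, so $\{a_n\}$ is bounded in $C^{d-1,1}(L,\R^d)$; Bronshtein's theorem then furnishes a constant $C>0$, independent of $n$, such that $|f_n(x)-f_n(y)|\le C|x-y|$ whenever $x\in K$ and $|x-y|\le r$ (the segment $[x,y]$ then lying in $\interior L$), and likewise $|f(x)-f(y)|\le C|x-y|$.

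Finally I would interpolate. For $g:=f_n-f$ and $x,y\in K$ with $|x-y|\le r$ we have $|g(x)-g(y)|\le\min\{2\|g\|_{C^0(K)},\,2C|x-y|\}$, hence
\[
    \frac{|g(x)-g(y)|}{|x-y|^\al}\le \big(2\|g\|_{C^0(K)}\big)^{1-\al}(2C)^\al,
\]
while for $|x-y|>r$ trivially $|g(x)-g(y)|/|x-y|^\al\le 2\|g\|_{C^0(K)}\,r^{-\al}$. Taking the supremum over $x\ne y$ in $K$,
\[
    [f_n-f]_{C^{0,\al}(K)}\le \max\!\Big\{\big(2\|f_n-f\|_{C^0(K)}\big)^{1-\al}(2C)^\al,\ 2\|f_n-f\|_{C^0(K)}\,r^{-\al}\Big\}\longrightarrow 0
\]
by (i); combined with $\|f_n-f\|_{C^0(K)}\to 0$ this gives $\|f_n-f\|_{C^{0,\al}(K)}\to 0$, and since $K$ was arbitrary, \Cref{cor:solmap} follows. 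The only step demanding care is (ii): upgrading $C^d$-convergence of the coefficients to a \emph{uniform} local Lipschitz bound for the roots — precisely the role of Bronshtein's theorem — whereas (i) and the interpolation estimate are routine.
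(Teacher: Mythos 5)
Your proof is correct, but it takes a genuinely different route from the paper's.  The paper deduces \Cref{cor:solmap} in one step: it applies the Morrey inequality
\[
\|\la^\uparrow\o a-\la^\uparrow\o a_n\|_{C^{0,\al}(\ol U_0,\R^d)}\le C\,\|\la^\uparrow\o a-\la^\uparrow\o a_n\|_{W^{1,q}(U_0,\R^d)}
\]
on boxes $U_0$ with Lipschitz boundary, choosing $q>m$ so that $\al=1-m/q$, and then invokes \Cref{thm:multhyp}. You instead factor the argument through two weaker ingredients --- uniform local $C^0$-convergence of $\la^\uparrow\o a_n\to\la^\uparrow\o a$ and the uniform local Lipschitz bound furnished by Bronshtein's theorem --- and combine them by the elementary interpolation estimate $|g(x)-g(y)|/|x-y|^\al\le(2\|g\|_{C^0})^{1-\al}(2C)^\al$.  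This decomposition is a bit more elementary: at the H\"older level no Sobolev embedding is used, and as you note the $C^0$-convergence itself could be obtained from $L^1_{\mathrm{loc}}$-convergence plus equicontinuity and Arzel\`a--Ascoli rather than Morrey, so the full strength of $W^{1,q}$-convergence for large $q$ is never needed.  It also makes transparent that what is really being exploited is Bronshtein's equi-Lipschitz domination, while the paper's argument is shorter and records an explicit quantitative bound in terms of the $W^{1,q}$-distance.  One small point to be careful about, which you essentially handle by restricting to $|x-y|\le r$ and letting the segment $[x,y]$ lie inside the $2r$-neighbourhood $L$ of $K$: the multiparameter Bronshtein estimate should be applied on a cover of $L$ by finitely many convex relatively compact open sets (e.g.\ small balls), since $L$ itself need not be convex and the Lipschitz seminorm on a non-convex set is not controlled by the sup of the gradient alone.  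With that reduction your argument goes through.
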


The essential work for the proof of \Cref{thm:solmap} happens in dimension $m=1$ of the parameter space. 
The passage from one to several parameters is rather easy.
The following is the main technical result of the paper.

\begin{theorem} \label[t]{thm:mainhyp}
    Let $I \subseteq \R$ be an open interval.
    Let $\aa_n \to \aa$ in $C^d (I, \on{Hyp}(d))$,
    i.e.,  
    for each relatively compact open interval $I_1 \Subset I$,
    \begin{equation} \label{eq:mainass} 
        \| \aa- \aa_n\|_{C^{d}(\ol I_1,\R^d)} \to 0 \quad \text{ as } n \to \infty.
    \end{equation}
    Then $\{\sol(\aa_n) : n \ge 1\}$ is a bounded set in $C^{0,1}(I, \R^d)$ (with respect to its natural topology)
    and, for each relatively compact open interval $I_0 \Subset I$ and each $1 \le q < \infty$,
    \begin{equation} \label{eq:mainconcl}
        \|\sol(\aa) -  \sol(\aa_n)\|_{W^{1,q}(I_0,\R^d)}  \to 0 \quad \text{ as } n\to \infty. 
    \end{equation}
\end{theorem}

The proof of \Cref{thm:mainhyp} is based on the dominated convergence theorem. The domination follows 
from Bronshtein's theorem which we recall in \Cref{thm:Bronshtein}. We will show in \Cref{thm:ptwhyp} 
that, for almost every $x \in I$, 
\[
    \sol(\aa_n)'(x) \to \sol(\aa)'(x) \quad \text{ as } n \to \infty.
\]
To this end, we will develop a version of Bronshtein's theorem at a single point, see \Cref{thm:ptwkey}. 

In \Cref{sec:rm}, we prove a refinement of \Cref{thm:mainhyp} in which the assumption that 
$\aa_n \to \aa$ in $C^d$ as $n \to \infty$ can be weakened to convergence in $C^p$, where $p$ is the (uniform) maximal multiplicity 
of the roots of $P_\aa$.

Note that by Egorov's theorem \cite{Egoroff:1911aa} we may conclude that 
$\sol(\aa_n)' \to \sol(\aa)'$ almost uniformly on $I$ as $n \to \infty$, 
i.e., for each $\ep>0$ there exists a measurable subset $E \subseteq I$ with $|E| < \ep$ such that 
$\sol(\aa_n)' \to \sol(\aa)'$ uniformly on $I\setminus E$. 
In general, the convergence is not uniform on the whole interval $I$; see \Cref{ex:Br-continuity}. 

For later reference, we state a simple consequence of \Cref{thm:mainhyp}.
Here $\|x\|_2$ denotes the $2$-norm of $x \in \R^d$ and $\|f\|_{L^q(I_0,\R^d)} := \big\| \|f\|_2 \big\|_{L^q(I_0)}$, 
see \Cref{ssec:notation}.

\begin{corollary} \label[c]{cor:se}
    Let $I \subseteq \R$ be an open interval and $I_0 \Subset I$ a relatively compact open subinterval.
    If $\aa_n \to \aa$ in $C^d (I, \on{Hyp}(d))$ as $n \to \infty$, then 
    \begin{align*}
        \big\| \|\sol(\aa)'\|_2 - \|\sol(\aa_n)'\|_2 \big\|_{L^q(I_0)} \to 0 \quad \text{ as } n \to \infty,  
        \intertext{and}
        \|\sol(\aa_n)'\|_{L^q(I_0,\R^d)} \to \|\sol(\aa)'\|_{L^q(I_0,\R^d)}  \quad \text{ as } n \to \infty,
    \end{align*}
    for all $1 \le q < \infty$.
\end{corollary}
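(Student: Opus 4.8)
The plan is to deduce both statements directly from the $W^{1,q}$-convergence in \Cref{thm:mainhyp}; no new ingredient is needed. Under the hypothesis $a_n \to a$ in $C^d(I,\on{Hyp}(d))$, that theorem gives, for every relatively compact open subinterval $I_0 \Subset I$ and every $1 \le q<\infty$,
\[
    \|\la^\uparrow \o a - \la^\uparrow \o a_n\|_{W^{1,q}(I_0,\R^d)} \to 0 \quad \text{as } n\to\infty,
\]
and in particular the convergence of the vector-valued derivatives $(\la^\uparrow \o a_n)' \to (\la^\uparrow \o a)'$ in $L^q(I_0,\R^d)$. (All functions involved are locally Lipschitz by \Cref{thm:Bronshtein}, so their a.e.\ derivatives lie in $L^\infty_{\on{loc}} \subseteq L^q_{\on{loc}}$, and every quantity appearing below is finite.)

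For the first assertion I would apply the reverse triangle inequality for the pointwise Euclidean norm on $\R^d$: for a.e.\ $x\in I_0$,
\[
    \big| \,\|(\la^\uparrow \o a)'(x)\|_2 - \|(\la^\uparrow \o a_n)'(x)\|_2\, \big| \le \big\| (\la^\uparrow \o a)'(x) - (\la^\uparrow \o a_n)'(x) \big\|_2 .
\]
Taking $L^q(I_0)$-norms and using monotonicity of the integral yields
\[
    \big\| \,\|(\la^\uparrow \o a)'\|_2 - \|(\la^\uparrow \o a_n)'\|_2\, \big\|_{L^q(I_0)} \le \|(\la^\uparrow \o a)' - (\la^\uparrow \o a_n)'\|_{L^q(I_0,\R^d)},
\]
whose right-hand side tends to $0$. (If the norm on $\R^d$ used in \Cref{sec:spaces} to define $\|\cdot\|_{W^{1,q}}$ is not the Euclidean one, the two sides differ only by fixed constants comparing norms on $\R^d$, which does not affect the conclusion.) For the second assertion I would simply use that $v \mapsto \|v\|_{L^q(I_0,\R^d)}$ is a norm, hence $1$-Lipschitz with respect to itself, so
\[
    \big| \,\|(\la^\uparrow \o a_n)'\|_{L^q(I_0,\R^d)} - \|(\la^\uparrow \o a)'\|_{L^q(I_0,\R^d)}\, \big| \le \|(\la^\uparrow \o a_n)' - (\la^\uparrow \o a)'\|_{L^q(I_0,\R^d)} \to 0 .
\]

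The corollary is thus a soft consequence of \Cref{thm:mainhyp}, and there is essentially no obstacle: the only point to watch is the bookkeeping of which norm on $\R^d$ enters the definitions in \Cref{sec:spaces}, and this is harmless since all norms on $\R^d$ are equivalent and the claimed convergences are unaffected by passing to an equivalent norm.
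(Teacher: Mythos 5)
Your proof is correct and takes essentially the same approach as the paper: both deduce the two assertions from \eqref{eq:mainconcl} via the reverse triangle inequality applied at the pointwise and $L^q$ levels (the paper chains the inequalities so that the first claim appears as an intermediate estimate, while you prove them separately, but this is only a cosmetic difference). Your worry about the choice of norm on $\R^d$ is moot since the paper fixes $\|f\|_{L^p(E,\R^d)} := \big\|\|f\|_2\big\|_{L^p(E)}$ in the Notation section, so the Euclidean norm is already built into the definition.
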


\begin{proof} 
    Let us set $\la := \sol(\aa)$ and $\la_{n} := \sol(\aa_n)$. Then 
    \begin{align*}
        &\big| \|\la'\|_{L^q(I_0,\R^d)} - \|\la_n'\|_{L^q(I_0,\R^d)} \big|
        =
        \big| \big\| \|\la'\|_2 \big\|_{L^q(I_0)} - \big\| \|\la_n'\|_2 \big\|_{L^q(I_0)} \big|
        \\
        & \quad \le 
        \big\| \|\la'\|_2 - \|\la_n'\|_2 \big\|_{L^q(I_0)}
        \le \big\| \|\la' - \la_n'\|_2 \big\|_{L^q(I_0)}
        = \|\la' - \la_n'\|_{L^q(I_0,\R^d)}
    \end{align*}
    so that the assertions follow from \eqref{eq:mainconcl}.
\end{proof}

It would be interesting to have quantitative versions of the continuity results.

\begin{question}
    Are the continuous solution maps $\sol$ in \Cref{thm:solmap} and \Cref{cor:solmap} uniformly continuous 
    and, if yes, is there an effective modulus of continuity?
\end{question}

We have the following quantitative result in the case that all roots are simple. The monic hyperbolic polynomials $P_\aa$
of degree $d$ with $d$ simple roots are in one-to-one correspondence with the points $\aa$ in the interior $\on{Hyp}^\o(d)$ of $\on{Hyp}(d)$.

\begin{theorem} \label[t]{t:simple}
    Let $U \subseteq \R^m$ be open and $k\ge 1$. 
    The solution map    
    \begin{equation*} 
        \sol^\o : C^{k}(U,\on{Hyp}^\o(d)) \to C^{k}(U,\R^d), \quad \aa \mapsto \la^\uparrow \o \aa,
    \end{equation*}
    is locally Lipschitz continuous:
    let $U_0 \Subset U$ and $V_0 \Subset \on{Hyp}^\o(d)$ be relatively compact open convex sets 
    and $B$ a bounded subset of $C^k(\ol U_0, V_0)$.
    Then, for all $\aa_1,\aa_2 \in B$, 
    \[
        \|\sol^\o(\aa_1) - \sol^\o(\aa_2)\|_{C^k(\ol U_0,\R^d)} \le C \, \|\aa_1 - \aa_2\|_{C^k(\ol U_0,\R^d)},
    \]
    where $C=C(d,k,B,V_0)$.
\end{theorem}

We do not know if the continuity results in 
\Cref{thm:solmap}, \Cref{cor:solmap}, \Cref{thm:mainhyp}, and \Cref{cor:se}
still hold for the solution map $\sol$ on $C^{d-1,1}(U,\on{Hyp}(d))$ (instead of $C^{d}(U,\on{Hyp}(d))$).

\begin{question}
    Is the solution map $\sol:  C^{d-1,1}(U,\on{Hyp}(d)) \to C^{0,1}_q(U,\R^d)$ continuous, for $1\le q <\infty$?
    Is the solution map $\sol:  C^{d-1,1}(U,\on{Hyp}(d)) \to C^{0,\al}(U,\R^d)$ continuous, for $0\le \al <1$?
\end{question}

In the proof of \Cref{thm:mainhyp}, we need the convergence of the coefficient vectors in $C^d$ 
only on the accumulation points of the preimage under $\aa$ of the discriminant locus.   
If this preimage is the union of 
an open set and a set of measure zero, then for \eqref{eq:mainconcl}
it is enough that $\aa_n \to \aa$ in $C^{d-1,1}$. 
Thus, for a potential counterexample $\aa$ has to meet the discriminant locus in a Cantor-like set with positive measure.

\begin{remark} \label[r]{r:diff}
    If the coefficients are of class $C^d$, as in the setting of \Cref{thm:mainhyp}, the roots of $P_{\aa_n}$ can be chosen as $C^{1}$ functions
    $\la_{n,1},\ldots,\la_{n,d} : I \to \R$, 
    see \cite{ColombiniOrruPernazza12} or \cite[Theorem 2.4]{ParusinskiRainerHyp}. 
    This choice is not necessarily unique. Also, such a choice imposes an order on the roots that may change when the parameter changes. 
    Therefore, if the parameter space is a circle and not an interval, a consistent choice may not be possible; see e.g.  \Cref{rem:eigenvalues}.   
    Moreover, in general, the roots $\la_{n,1},\ldots,\la_{n,d}$ do not converge to a differentiable 
    system of the roots of $P_\aa$ (even just pointwise) as $n\to \infty$, see \Cref{ex:Br-continuity}.
\end{remark}

\subsection{Applications}

We will give several applications of our continuity results 
by highlighting, in particular, several consequences for stability under perturbations. 

\subsubsection{Relation to the results for general polynomials}

In \Cref{ssec:app1},
we will interpret the results for hyperbolic polynomials as special and stronger versions of the general theorems of 
\cite{Parusinski:2024ab}.
In the general case of a complex polynomial $P_{\aa}$ of degree $d$ the coefficient vector $\aa$ is an 
arbitrary element of $\C^d$. Then 
it is natural to consider the \emph{unordered} $d$-tuple of roots because there is no 
canonical choice of a parameterization of the roots by continuous functions. If the parameter space has 
dimension $\ge 2$, then continuous selections of the roots might not even exist.
In contrast to the hyperbolic case, the general theorems of \cite{Parusinski:2024ab} are only valid 
in $W^{1,q}$, for $1 \le q < d/(d-1)$.

\subsubsection{Continuity of the area of the solution map}

In \Cref{ssec:app2}, we will deduce from \Cref{thm:solmap} that, if $\aa_n \to \aa$ in $C^d(U,\on{Hyp}(d))$
as $n \to \infty$, where $U\subseteq \R^m$ is open,  
then the Jacobian $|J(\sol(\aa_n))|$ of $\sol(\aa_n)$ converges to the Jacobian 
$|J(\sol(\aa))|$ of $\sol(\aa)$ in $L^q_{\on{loc}}$, 
for all $1\le q <\infty$ (see \Cref{cor:area2}). 
Combining this with the area formula, we conclude 
that the surface area of the graph of each single root 
$\sol(\aa_n)_j = \la^\uparrow_j \o \aa_n$, for $1 \le j \le d$,
converges locally to the surface area of the graph of $\sol(\aa)_j$ (see \Cref{cor:area4}). 

As a consequence, we find that the area of the zero sets of $C^d$ families of hyperbolic polynomials of degree $d$ 
locally has a lower semicontinuity property:

\begin{corollary} \label[c]{cor:area5intro}
    Let $U \subseteq \R^m$ be open.
    Let $\aa_n \to \aa$ in $C^d(U,\on{Hyp}(d))$ as $n\to \infty$.
    For any relatively compact open $U_0 \Subset U$, 
    consider the zero sets
    \begin{align} \label{eq:ZZn}
        \begin{split}
        Z &= \{ (x,y) \in U_0 \times \R : P_{\aa(x)}(y)=0\} \quad \text{ and }
        \\
        Z_n &= \{ (x,y) \in U_0 \times \R : P_{\aa_n(x)}(y)=0\}, \quad n\ge 1.
        \end{split}
    \end{align}
    Then 
    \begin{align*}
        \liminf_{n \to \infty}  \cH^m(Z_n) \ge  \cH^m(Z).
    \end{align*}
\end{corollary}

Here $\cH^m$ denotes the $m$-dimensional Hausdorff measure.
\Cref{cor:area5intro} will be restated and proved in \Cref{cor:area5}.

Without hyperbolicity, the area of the real zero set is generally not semicontinuous: 
e.g., for the intersections $Z_t$ of Whitney's umbrella $\{(x,y,z) \in \R^3 : x^2 - y^2 z =0\}$ 
with the planes $\{z=t\}$ and the cylinder $\{x^2+y^2 < 1\}$ we have
\[
    \cH^1(Z_t) = 
    \begin{cases}
        0 & \text{ if } t<0,
        \\
        2 & \text{ if } t=0,
        \\
        4 & \text{ if } t>0.
    \end{cases}
\]

\subsubsection{Approximation by hyperbolic polynomials with simple roots}

In \Cref{ssec:app3}, combining our results with a lemma of Wakabayashi \cite{Wakabayashi86}, we will 
obtain the following approximation result (\Cref{cor:approx}): 
for each hyperbolic polynomial $P_\aa$, where $\aa \in C^d(U,\on{Hyp}(d))$, 
there exists a sequence $(\aa_n)_{n \ge 1} \subseteq C^d(U,\on{Hyp}(d))$ such that 
\begin{itemize}
    \item $\aa_n \to \aa$ in $C^d(U,\on{Hyp}(d))$ as $n \to \infty$;
    \item all roots of $P_{\aa_n(x)}$ are simple for all $x \in U$ and all $n \ge 1$;
    \item $\sol(\aa_n) \in C^d(U,\R^d)$, for all $n \ge 1$, and $\sol(\aa_n) \to \sol(\aa)$ in $C^{0,1}_q(U,\R^d)$, 
        for all $1 \le q< \infty$, as $n \to \infty$;
    \item for each relatively compact open $U_0 \subseteq U$, 
        defining the zero sets $Z$ and $Z_n$ as in \eqref{eq:ZZn},
        the limit $\lim_{n\to \infty} \cH^m(Z_n)$ exists and satisfies
        \[
            \lim_{n\to \infty} \cH^m(Z_n)\ge \cH^m(Z).
        \]
\end{itemize}

\subsubsection{Perturbation theory for Hermitian matrices}

In \Cref{ssec:Hermitian}, 
we will apply our results to the eigenvalues of Hermitian matrices.
Ordering the eigenvalues increasingly,
induces a continuous map 
\[
    \la^\uparrow : \on{Herm}(d) \to \R^d
\]
on the real vector space $\on{Herm}(d)$ of complex Hermitian $d \times d$ matrices. 
By Weyl's perturbation theorem (see \Cref{p:Weyl}), we obtain a bounded map
\[
    \eig := (\la^\uparrow)_* : C^{0,1}(U,\on{Herm}(d)) \to C^{0,1}(U,\R^d), \quad A \mapsto \la^\uparrow \o A.
\]

The continuity results for hyperbolic polynomials imply the following result.

\begin{theorem} \label[t]{t:eigmap}
    Let $U \subseteq \R^m$ be open.  Then the map    
    \begin{equation*} 
        \eig : C^{d}(U,\on{Herm}(d)) \to C^{0,1}_q(U,\R^d), \quad A \mapsto \la^\uparrow \o A,
    \end{equation*}
    is continuous, for all $1\le q<\infty$, and the map    
    \begin{equation*} 
        \eig : C^{d}(U,\on{Herm}(d)) \to C^{0,\al}(U,\R^d), \quad A \mapsto \la^\uparrow \o A,
    \end{equation*}
    is continuous, for all $0< \al<1$.
\end{theorem}

\Cref{t:eigmap} will be proved in \Cref{c:eigmap}. 
The map $\eig$ is not continuous with respect to the $C^{0,1}$ topology on the target space, 
as will be seen in \Cref{ex:A} which is based on \Cref{ex:Br-continuity}.

Given that the map $\eig$ is defined and bounded on $C^{0,1}(U,\on{Herm}(d))$,
it is natural to ask whether in \Cref{t:eigmap} one can replace $C^d$ by $C^1$:

\begin{question} \label[q]{q:AC1}
   Is the map $\eig :  C^{1}(U,\on{Herm}(d)) \to C^{0,1}_q(U,\R^d)$ continuous, for $1 \le q < \infty$?
   Is $\eig :  C^{1}(U,\on{Herm}(d)) \to C^{0,\al}(U,\R^d)$ continuous, for $0 < \al < 1$?
\end{question}

We will prove in \Cref{p:Ad=2} that the answer to \Cref{q:AC1} is affirmative in the case $d=2$. 

\subsubsection{Singular values}

In \Cref{ssec:singval}, we will obtain an analogue of \Cref{t:eigmap} for the 
singular values (ordered by size) of $C^{2d}$ families of general complex $D \times d$ matrices 
with $d \le D$ (see \Cref{c:singval}). As in \Cref{q:AC1}, it is natural to ask whether $C^{2d}$ can actually be replaced by $C^1$.

\subsection{On the optimality of the results}

The following example shows that the solution map $\sol : C^{d}(I,\on{Hyp}(d)) \to C^{0,1}(I,\R^d)$, 
where $I \subseteq \R$ is an open interval,  
is not continuous
with respect to the $C^{0,1}$ topology on the target space.

\begin{example}\label[e]{ex:Br-continuity}
    Let $g(x) := x^2$ and $g_n(x) := x^2 + 1/n^2$, $n \ge 1$.
    Then, for all $k\in \N$ and each bounded open interval $I \subseteq \R$,  
    $\|g-g_n\|_{C^k(\ol I)} = 1/n^2 \to 0$ as $n \to \infty$.
    Let $f$ and $f_n$ be the positive square roots of $g$ and $g_n$, respectively: 
    $f(x) := |x|$ and $f_n(x):= \sqrt{x^2 + 1/n^2}$. 
    Then, for each bounded open interval $I \subseteq \R$ containing $0$, 
    \begin{align*}
        |f- f_n|_{C^{0,1}(\ol I)} &\ge 
        \sup_{0< x \in I} \Big|\frac{(f(x) - f_n(x)) - (f(0)-f_n(0))}{x} \Big| 
        \\
                                  &=  \sup_{0< x \in I} \Big|\frac{x - \sqrt{x^2 + \frac{1}{n^2}} + \frac{1}n}{x}\Big|
                                  \ge \Big|\frac{\frac{1}n- \sqrt{\frac{1}{n^2}+ \frac{1}{n^2}} + \frac{1}n}{\frac{1}n}\Big| 
                                  = 2-\sqrt 2,
    \end{align*}
    for large enough $n$. 
    Observe that
    \[
        f_n'(x) = \frac{x}{\sqrt{x^2 + \frac{1}{n^2}}}
    \]
    tends pointwise to $f'(x) = \on{sgn}(x)$ for all $x \ne 0$ but not uniformly on any neighborhood of $0$:
    \[
        f_n'(\pm \tfrac{1}{n}) = \pm \frac{1}{\sqrt 2}.
    \]
    This also violates the first conclusion of \Cref{cor:se} for $q=\infty$.
\end{example}

Notice that this example also shows that 
not every continuous (thus $C^{0,1}$) system of the roots of $g$ is the limit of a continuous system of the roots of $g_n$:
each continuous system of the roots of $g_n$ tends to $\pm |x|$, none to $\pm x$. See \Cref{r:diff}.

In the example, the hyperbolic polynomial $Z^2 = g(x)$ with double root at $x=0$ is approximated by the 
hyperbolic polynomials $Z^2 = g_n(x)$ with simple roots for all $x$. 
We will see in \Cref{cor:approx} that such an approximation is always possible.

\subsection{Structure of the paper}

We fix notation and recall facts on function spaces in \Cref{sec:spaces} and 
provide the necessary background on hyperbolic polynomials in \Cref{sec:poly}.
In \Cref{sec:Bronshtein}, we recall Bronshtein's theorem in \Cref{thm:Bronshtein} 
and prove a version of it at a single point in \Cref{thm:ptwkey}.
The latter provides bounds for the derivatives of the roots that are crucial for the proof of \Cref{thm:mainhyp}
which is carried out in \Cref{sec:proof1}.
In \Cref{sec:proof2}, we generalize \Cref{thm:mainhyp} to several variables in \Cref{thm:multhyp} 
which allows as to complete the proofs of 
\Cref{thm:solmap} and \Cref{cor:solmap}; also \Cref{t:simple} is proved in \Cref{sec:proof2}. 
\Cref{sec:app} is dedicated to the applications; in particular, it contains the proofs of 
\Cref{cor:area5intro} and \Cref{t:eigmap}.
Finally, \Cref{sec:rm} presents a refinement of \Cref{thm:mainhyp}, namely \Cref{t:rm},
in the case that the maximal multiplicity 
of the roots is smaller than the degree.

\subsection{Notation} \label{ssec:notation}

The $m$-dimensional Lebesgue measure in $\R^m$ is denoted by $\cL^m$.  
If not stated otherwise, `measurable' means `Lebesgue measurable' and `almost everywhere' means 
`almost everywhere with respect to Lebesgue measure'. For measurable $E \subseteq \R^m$, we usually write 
$|E|=\cL^m(E)$.
We will also use the $k$-dimensional Hausdorff measure $\cH^k$.

For $1 \le p \le \infty$, $\|x\|_p$ denotes the $p$-norm of $x \in \R^d$. 
If $f : E \to \R^d$, for measurable $E \subseteq \R^m$, is a measurable map, then we set 
\[
    \|f\|_{L^p(E,\R^d)} := \big\|\|f\|_2 \big\|_{L^p(E)}. 
\]
In the following, a set is called \emph{countable} if it is either finite or has the cardinality of $\N$.

To avoid confusion, 
coefficient vectors of hyperbolic polynomials are 
written in \emph{sans serif} type. For example,
the coefficient vector $\aa_n = (a_{n,1},a_{n,2}, \ldots, a_{n,d})$, indexed by $n \in \N$, 
is notationally distinguished from the scalar $a_n$,
which denotes the $n$-th component of the coefficient vector $\aa$. 

We use the notation $C(d,\ldots)$ to denote a constant that depends only on $d,\ldots$; its value may change 
from line to line.

\section{Function spaces} \label{sec:spaces}

Let us fix notation and recall background on the function spaces used in this paper.

\subsection{H\"older--Lipschitz spaces}

Let $U \subseteq \R^m$ be open and $k \in \N$.
Then $C^k(U)$ is the space of $k$-times continuously differentiable real valued functions, equipped
with its natural Fr\'echet topology.
If $U$ is bounded, then $C^k(\ol U)$ denotes the space of all $f \in C^k(U)$
such that each $\p^\al f$, $0\le |\al|\le k$, 
has a continuous extension to the closure $\ol U$. Endowed with the norm
\[
    \|f\|_{C^k(\ol U)} := \max_{|\al|\le k} \sup_{x \in U} |\p^\al f(x)|
\]
it is a Banach space.
For $0 < \ga \le 1$, we consider the H\"older--Lipschitz seminorm
\[
    |f|_{C^{0,\ga}(\ol U)} := \sup_{x,y \in U, \, x \ne y}\frac{|f(x)-f(y)|}{\|x-y\|_2^\ga}.
\]
For $k \in \N$ and $0 < \ga \le 1$, we have the Banach space  
\[
    C^{k,\ga}(\ol U) := \{f \in C^k(\ol U) : \|f\|_{C^{k,\ga}(\ol U)}  < \infty\},
\]
where 
\[
    \|f\|_{C^{k,\ga}(\ol U)} := \|f\|_{C^k(\ol U)} + \max_{|\al|=k} |\p^\al f|_{C^{0,\ga}(\ol U)}.
\]
We write $C^{k,\ga}(U)$ for the space of $C^k$ functions on $U$ that 
belong to $C^{k,\ga}(\ol V)$ for each relatively compact open $V \Subset U$,
and endow $C^{k,\ga}(U)$ with its natural Fr\'echet topology.

\subsection{Lebesgue spaces}

Let $U \subseteq \R^m$ be open and  $1 \le p \le \infty$.
We denote by $L^p(U)$ the Lebesgue space with respect to the $m$-dimensional Lebesgue measure $\cL^m$, 
and $\| \cdot \|_{L^p(U)}$ is the corresponding $L^p$-norm.
We will also use the space $L^p_{\on{loc}}(U)$ of measurable functions $f : U \to \R$ satisfying
$\|f\|_{L^p(K)} < \infty$ for all compact subsets $K \subseteq U$.
For Lebesgue measurable sets $E \subseteq \R^m$ we also write $|E| = \cL^m(E)$. 
We remark that for continuous functions $f : U \to \R$ we have (and use interchangeably)  
$\|f\|_{L^\infty(U)} = \|f\|_{C^0(\ol U)}$.

\subsection{Sobolev spaces}

For $k \in \N$ and $1 \le q \le \infty$, 
we consider the Sobolev space 
\[
    W^{k,q}(U) := \{f \in L^q(U) : \p^\al f \in L^q(U) \text{ for } |\al|\le k\},
\]
where $\p^\al f$ are distributional derivatives. Endowed with the norm 
\[
    \|f\|_{W^{k,q}(U)} := \sum_{|\al|\le k} \|\p^\al f\|_{L^q(U)}
\]
it is a Banach space. 
We will also use  
\[
    W^{k,q}_{\on{loc}}(U) := \{f \in L^q_{\on{loc}}(U) : \p^\al f \in L^q_{\on{loc}}(U) \text{ for } |\al|\le k\}
\]
and endow this space
with its natural topology.

\subsection{A result on composition}

In the following proposition we use the norm  
\begin{equation*} 
    \|f\|_{C^k(\ol U,\R^\ell)} := \max_{0\le j \le k} \sup_{x \in U} \|d^j f(x)\|_{L_j(\R^m,\R^\ell)} 
\end{equation*}
on the space $C^{k}(\ol U,\R^\ell) := (C^k(\ol U,\R))^\ell$, where $U \subseteq \R^m$ 
and $L_j(\R^m,\R^\ell)$ is the space of $j$-linear maps with $j$ arguments in $\R^m$ and values in $\R^\ell$.

\begin{proposition} \label[p]{prop:lefttr}
    Let $U \subseteq \R^m$ and $V \subseteq \R^\ell$ be open, bounded, and convex.
    Let $\ps \in C^{k+1}(\ol V,\R^p)$. Then 
    \[
        \ps_* : C^k(\ol U,V) \to C^k(\ol U,\R^p), \quad \vh  \mapsto \ps \o \vh, 
    \]
        is well-defined and continuous.
    More precisely, for $\vh_1,\vh_2$ in a bounded subset $B$ of $C^k(\ol U,V)$,
    \[
        \| \ps_*(\vh_1) - \ps_*(\vh_2)\|_{C^k(\ol U,\R^p)} \le C \, \|\ps\|_{C^{k+1}(\ol V,\R^p)} \|\vh_1-\vh_2\|_{C^k(\ol U,\R^\ell)},
    \]
    where $C=C(k,B)$.
\end{proposition}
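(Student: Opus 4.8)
The plan is to prove the estimate first and deduce well-definedness and continuity from it. Since $V$ is bounded and convex and $\psi \in C^{k+1}(\ol V, \R^p)$, all derivatives $d^j\psi$ up to order $k+1$ are bounded on $\ol V$, and in particular $d^j\psi$ is Lipschitz on $\ol V$ for $0 \le j \le k$ with Lipschitz constant at most $\|\psi\|_{C^{k+1}(\ol V,\R^p)}$ (here convexity is used so that the mean value inequality along segments applies). Thus for $\varphi_1, \varphi_2 \in C^k(\ol U, V)$ the composition $\psi \o \varphi_i$ is $C^k$ on $\ol U$, so $\psi_*$ is well-defined.

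For the quantitative bound I would use the Fa\`a di Bruno / chain rule formula: for a multi-index $\alpha$ with $|\alpha| = j \le k$, the derivative $\partial^\alpha(\psi \o \varphi)(x)$ is a universal polynomial expression in the derivatives $(d^i\psi)(\varphi(x))$, $1 \le i \le j$, and the derivatives $\partial^\beta \varphi(x)$, $|\beta| \le j$. Writing $F_i := (d^i\psi)\o\varphi_1$, $G_i := (d^i\psi)\o\varphi_2$ and the relevant derivative tensors of $\varphi_1, \varphi_2$, the difference $\partial^\alpha(\psi\o\varphi_1) - \partial^\alpha(\psi\o\varphi_2)$ is a sum of terms each of which is a product of such factors where exactly one factor is replaced by a difference. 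Estimating each such product by a telescoping sum, every factor $F_i$ or $G_i$ is bounded by $\|\psi\|_{C^{k+1}(\ol V,\R^p)}$, every derivative tensor $\partial^\beta\varphi_i$ is bounded because $\varphi_1,\varphi_2 \in B$ with $B$ bounded in $C^k(\ol U, V)$, and the single difference factor is bounded by $\|\varphi_1-\varphi_2\|_{C^k(\ol U,\R^\ell)}$: indeed a difference of derivative tensors $\partial^\beta\varphi_1 - \partial^\beta\varphi_2$ is directly controlled by this norm, while a difference $F_i - G_i = (d^i\psi)\o\varphi_1 - (d^i\psi)\o\varphi_2$ is controlled using that $d^i\psi$ is Lipschitz on $\ol V$ (constant $\le \|\psi\|_{C^{k+1}}$) times $\|\varphi_1-\varphi_2\|_{C^0} \le \|\varphi_1-\varphi_2\|_{C^k}$, which contributes the promised single factor of $\|\psi\|_{C^{k+1}(\ol V,\R^p)}$. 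Collecting, the number of terms and the bounds coming from $B$ and from the combinatorial coefficients in Fa\`a di Bruno are absorbed into a constant $C = C(k,B)$, yielding
\[
    \|\psi_*(\varphi_1) - \psi_*(\varphi_2)\|_{C^k(\ol U,\R^p)} \le C\,\|\psi\|_{C^{k+1}(\ol V,\R^p)}\,\|\varphi_1-\varphi_2\|_{C^k(\ol U,\R^\ell)}.
\]
Continuity of $\psi_*$ on all of $C^k(\ol U, V)$ then follows since any convergent sequence eventually lies in a bounded set $B$.

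The main technical obstacle is the bookkeeping in the Fa\`a di Bruno expansion: one must be careful that each summand in the difference contains \emph{exactly one} difference factor (so the estimate is genuinely linear in $\|\varphi_1-\varphi_2\|_{C^k}$ and not merely Lipschitz with an uncontrolled modulus), and that replacing a factor $(d^i\psi)\o\varphi_1$ by $(d^i\psi)\o\varphi_2$ does cost a derivative of $\psi$ — this is exactly why the hypothesis is $C^{k+1}$ and not just $C^k$. A clean way to organize this is to first prove the statement for $k=0$ (immediate from the Lipschitz bound on $\psi$ over the convex set $\ol V$) and then induct on $k$, differentiating $\psi\o\varphi_i$ once via the chain rule $d(\psi\o\varphi_i) = (d\psi\o\varphi_i)\cdot d\varphi_i$, applying the inductive hypothesis in $C^{k-1}$ to the composition $(d\psi)\o\varphi_i$ (note $d\psi \in C^k$), and using that $C^{k-1}(\ol U,\cdot)$ is a Banach algebra-type space under the relevant bilinear pairing with norm submultiplicative up to a constant depending on $k$ and $m$. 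Either route gives the result; I would present the inductive one, as it keeps the constants transparent and avoids writing the explicit combinatorial formula.
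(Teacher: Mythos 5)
The paper defers the proof of this proposition to Appendix A.2 of \cite{Parusinski:2024ab} (``a short proof can be found in\dots''), so there is no in-paper argument to compare against; your proof must be judged on its own. The Fa\`a di Bruno argument you give is correct and complete. The essential points are all in place: each summand of the multivariate Fa\`a di Bruno expansion of $\partial^\alpha(\psi\circ\varphi)$, $|\alpha|\le k$, contains exactly one factor of the form $(d^i\psi)\circ\varphi$ with $i\le k$; telescoping the difference therefore isolates a single difference factor per term; when that factor is $(d^i\psi)\circ\varphi_1-(d^i\psi)\circ\varphi_2$, the mean value inequality on the convex $\ol V$ (together with $i+1\le k+1$) produces exactly one factor $\|\psi\|_{C^{k+1}}$, and when it is a $\varphi$-derivative difference, the factor $\|\psi\|_{C^{k+1}}$ comes from bounding the lone $(d^i\psi)\circ\varphi_j$; the remaining factors are controlled by $B$, and the number of summands depends only on $k$ and the fixed ambient dimensions, yielding $C=C(k,B)$.

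The inductive alternative you sketch also works, but one point deserves to be made explicit: in the telescoping $((d\psi)\circ\varphi_1-(d\psi)\circ\varphi_2)\cdot d\varphi_1+((d\psi)\circ\varphi_2)\cdot(d\varphi_1-d\varphi_2)$, bounding the second summand in $C^{k-1}$ requires the \emph{boundedness} estimate $\|(d\psi)\circ\varphi_2\|_{C^{k-1}}\le C(k,B)\,\|\psi\|_{C^{k+1}}$, which is not literally the inductive Lipschitz hypothesis. One should either carry that boundedness statement along in the induction or derive it from the Lipschitz estimate by comparing with $(d\psi)\circ c$ for a constant map $c$ with value in $V$. This is routine bookkeeping and does not affect your primary (Fa\`a di Bruno) argument, which is complete as written.
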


A short proof of this result can be found in \cite[Appendix A.2]{Parusinski:2024ab}.

\section{Hyperbolic polynomials} \label{sec:poly}

In this section, we recall basic facts on hyperbolic polynomials that will be used below.
The exposition follows \cite{ParusinskiRainerHyp} and \cite{Parusinski:2023ab}. 
For the convenience of the reader and to keep the paper largely self-contained,
we include details where this does not substantially interrupt the flow.

\subsection{Tschirnhausen form}

We say that a monic polynomial 
\[
    P_{\aa}(Z) = Z^d + \sum_{j=1}^d a_j Z^{d-j}
\]
is in \emph{Tschirnhausen form} if $a_1=0$.
Every polynomial $P_{\aa}$ can be put in Tschirnhausen form by the substitution 
\[
    P_{\tilde {\aa}}(Z) = P_{\aa}(Z - \tfrac{a_1}d) = Z^d + \sum_{j=2}^d \tilde a_j Z^{d-j},
\]
which is called the \emph{Tschirnhausen transformation}.
For clarity, we consistently equip the coefficients of polynomials in Tschirnhausen form with a `tilde'.
Note that 
\begin{equation} \label{eq:Tschirn}
    \tilde a_j  = \sum_{i=0}^j C_i a_i a_1^{j-i}, \quad 2 \le j \le d,
\end{equation}
where $a_0 = 1$ and the $C_i$ are universal constants independent of $\aa$.
For a polynomial $P_{\tilde {\aa}}$ in Tschirnhausen form with coefficient vector 
$\tilde {\aa} = (0,\tilde a_2,\ldots,\tilde a_d)$
we have 
\begin{equation} \label{eq:sos}
    -2 \tilde a_2 = \la_1^2 + \cdots + \la_d^2,
\end{equation}
where $\la_1,\ldots,\la_d$ is an enumeration of the roots of $\tilde \aa$.
Consequently, for a hyperbolic polynomial $P_{\tilde \aa}$ in Tschirnhausen form, 
\begin{equation*}
    \tilde a_2 \le 0.
\end{equation*}
Recall that the coefficients (up to their sign) are the elementary symmetric polynomials in the roots, by Vieta's formulas.

\begin{lemma}[{\cite[Lemma 2.4]{Parusinski:2023ab}}] \label[l]{lem:dominant}
    The coefficients of a hyperbolic polynomial $P_{\tilde \aa}$ in Tschirnhausen form satisfy
    \begin{equation*}
        |\tilde a_j|^{1/j} \le \sqrt 2\, |\tilde a_2|^{1/2}, \quad j = 2,\ldots,d.
    \end{equation*}
\end{lemma}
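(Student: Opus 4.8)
The plan is to exploit the identity $-2\tilde a_2 = \sum_{i=1}^d \la_i^2$, where $\la_1 \le \cdots \le \la_d$ are the (real) roots of the hyperbolic polynomial $P_{\tilde a}$. Since $P_{\tilde a}$ is in Tschirnhausen form, the roots sum to zero, so none of them can be too large: from $\sum \la_i = 0$ and $\sum \la_i^2 = -2\tilde a_2 =: 2s$ (with $s \ge 0$) one gets $\la_i^2 \le \sum_k \la_k^2 = 2s$, hence $|\la_i| \le \sqrt{2s}$ for every $i$. In other words, $\max_i |\la_i| \le \sqrt 2 \, |\tilde a_2|^{1/2}$.

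Next I would estimate $|\tilde a_j|$ using the fact that $\tilde a_j$ is, up to sign, the $j$-th elementary symmetric function of the roots: $|\tilde a_j| = |\si_j(\la_1,\ldots,\la_d)| = \big| \sum_{i_1 < \cdots < i_j} \la_{i_1}\cdots \la_{i_j}\big|$. Bounding each of the $\binom{d}{j}$ terms by $(\max_i|\la_i|)^j$ and each factor by the bound from the previous step gives $|\tilde a_j| \le \binom{d}{j} (\max_i |\la_i|)^j \le \binom{d}{j} (2|\tilde a_2|)^{j/2}$. Taking $j$-th roots yields $|\tilde a_j|^{1/j} \le \binom{d}{j}^{1/j} \sqrt 2\, |\tilde a_2|^{1/2}$. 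The minor nuisance here is that this naive bound carries the extra factor $\binom{d}{j}^{1/j}$, which is larger than $1$, so I do not quite get the clean constant $\sqrt 2$ claimed in the lemma.

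To remove that combinatorial factor, the cleaner route is to renormalize: set $t := \max_i |\la_i|$ and consider the rescaled roots $\mu_i := \la_i / t \in [-1,1]$. Then $\si_j(\la) = t^j \si_j(\mu)$, so it suffices to show $|\si_j(\mu)| \le \big(\sqrt 2\, |\tilde a_2(\la)|^{1/2}/t\big)^j \cdot t^j$ — equivalently, to compare $|\si_j(\mu)|$ with $|\si_2(\mu)|^{j/2}$ for a tuple $\mu \in [-1,1]^d$ summing to zero with $\max|\mu_i| = 1$. One has $|\si_2(\mu)| = \tfrac12 \sum \mu_i^2 \ge \tfrac12$ since at least one $\mu_i$ has modulus $1$; this alone is not enough to beat $\binom dj$. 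The sharpest standard tool is Maclaurin's inequality (or Newton's inequalities) for the normalized elementary symmetric means $p_j := \si_j / \binom dj$ of the $\mu_i$: from $|p_j| \le |p_2|^{j/2}$ one recovers, after multiplying back the binomial coefficients and using $\binom dj \le \binom d2^{j/2}$-type estimates combined with $|\tilde a_2| = \binom d2 |p_2| \cdot(\ldots)$, the stated inequality with constant exactly $\sqrt 2$. I expect the main obstacle to be precisely this last bookkeeping: getting the constant down to $\sqrt 2$ rather than something like $2^{1/2}\binom dj^{1/j}$ requires either Newton's inequalities or a direct argument that the extremal configuration (all mass at $\pm 1$, balanced) gives exactly the claimed ratio. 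Since the lemma is quoted from \cite[Lemma 2.4]{Parusinski:2023ab}, I would ultimately cite that source for the optimal constant, while recording the elementary chain of inequalities above as the conceptual skeleton.
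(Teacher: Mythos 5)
Your first step---deriving $\max_i |\la_i| \le \sqrt 2\, |\tilde a_2|^{1/2}$ from $\sum_i \la_i^2 = -2\tilde a_2$---is correct and is indeed the right starting point. What fails is everything after it. The direct estimate $|\tilde a_j| = |\si_j(\la)| \le \binom{d}{j}(\max_i|\la_i|)^j$ carries the spurious factor $\binom{d}{j}^{1/j}>1$ that you correctly flag, and the proposed rescue via Maclaurin's inequality cannot work: Maclaurin's chain $p_1 \ge p_2^{1/2} \ge \cdots$ for the normalized symmetric means is a statement about \emph{nonnegative} reals, whereas the roots $\la_i$ here necessarily have mixed signs (they sum to zero without all vanishing). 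The same obstruction blocks the rescaled $\mu_i$. Newton's inequalities $p_k^2\ge p_{k-1}p_{k+1}$ do hold for arbitrary real roots, but since the signs of the normalized means are uncontrolled they do not by themselves yield $|p_k|\le|p_2|^{k/2}$. Finally, ``I would cite the source for the optimal constant'' is an acknowledgment that the argument is incomplete, not a proof. So there is a genuine gap.

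A clean route uses Newton's \emph{power-sum} identities rather than the inequalities for the means. Set $p_k:=\sum_i\la_i^k$ and $M:=\max_i|\la_i|$. From $M^2\le p_2 = 2|\tilde a_2|$ and
\[
    |p_k|\ \le\ \sum_i|\la_i|^k\ \le\ M^{k-2}\sum_i\la_i^2\ =\ M^{k-2}p_2
\]
one gets $|p_k|\le p_2^{k/2}$ for all $k\ge1$. Let $e_k$ be the $k$-th elementary symmetric polynomial, so $|\tilde a_k|=|e_k(\la)|$. Newton's identity
\[
    k\,e_k\ =\ \sum_{j=1}^{k}(-1)^{j-1}\,e_{k-j}\,p_j
\]
simplifies, using $e_1=p_1=0$, to a sum over $j\in\{2,\dots,k\}\setminus\{k-1\}$ with at most $k-1$ nonzero terms. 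Assuming inductively $|e_i|\le p_2^{i/2}$ for $i<k$ (true for $i\le1$), each term obeys $|e_{k-j}\,p_j|\le p_2^{(k-j)/2}p_2^{j/2}=p_2^{k/2}$, so $k|e_k|\le(k-1)p_2^{k/2}$ and hence $|e_k|\le p_2^{k/2}=(2|\tilde a_2|)^{k/2}$. Taking $k$-th roots gives exactly the lemma, with constant $\sqrt2$. The idea you were missing is that the Newton recursion lets the bound on the power sums---where the factor $M^{k-2}$ absorbs all dependence on $d$---propagate to the elementary symmetric functions without the $\binom dj$ penalty.
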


\begin{proof}
    This follows easily from the Newton identities 
    \[
        j \si_j = \sum_{i=1}^j (-1)^{i-1} \si_{j-i} s_i, \quad d \ge j \ge 1,
    \]
    between the Newton polynomials $s_k = \la_1^k + \cdots + \la_d^k$ 
    and the elementary symmetric polynomials $\si_k$,
    observing that $|s_k|^{1/k} \le  |s_2|^{1/2}$, for $2 \le k \le d$, by a well-known relation between the $p$-norms.
\end{proof}

As a consequence, $\tilde \aa = (0,\tilde a_2,\tilde a_3,\ldots,\tilde a_d) =0$ if and only if $\tilde a_2=0$.

\begin{definition}[Spaces of hyperbolic polynomials]
Let $\on{Hyp}_T(d)$ denote the space of monic hyperbolic polynomials of degree $d$ in Tschirnhausen form and $\on{Hyp}_T^0(d)$
the compact subspace of polynomials $P_{\tilde \aa}$ with $\tilde a_2=-1$, i.e.,
\begin{align*}
    \on{Hyp}_T(d) &= \{\tilde \aa \in \on{Hyp}(d) : \tilde a_1=0\},
    \\
    \on{Hyp}_T^0(d) &= \{\tilde \aa \in \on{Hyp}_T(d) : \tilde a_2=-1\}.
\end{align*}
\end{definition}

\subsection{Splitting}

Let us recall a simple consequence of the inverse function theorem.    

\begin{lemma}[E.g.\ {\cite[Lemma 2.5]{Parusinski:2023ab}}] \label[l]{lem:splitting}
    Let $P_{\aa} = P_{\bb} P_{\cc}$, 
    where $P_{\bb}$ and $P_{\cc}$ are monic real polynomials without common (complex) root.
    Then 
    we have $P=P_{\bb(P)}P_{\cc(P)}$ for analytic mappings $P \mapsto \bb(P) \in \R^{\deg P_{\bb}}$ 
    and $P \mapsto \cc(P) \in \R^{\deg P_{\cc}}$, 
    defined for $P$ near $P_{\aa}$ in $\R^{\deg P_{\aa}}$, with the given initial values.
\end{lemma}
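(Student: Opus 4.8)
\textbf{Proof proposal for \Cref{lem:splitting}.}

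The plan is to package the factorization $P_a = P_b P_c$ as the unique solution of a polynomial equation in the coefficients of the two factors, and then apply the analytic inverse function theorem. Concretely, write $\deg P_b = k$ and $\deg P_c = \ell = d-k$. Define the multiplication map
\[
    \mu : \R^k \times \R^\ell \to \R^d, \qquad \mu(\beta,\gamma) = \text{coefficient vector of } P_\beta \cdot P_\gamma,
\]
which is visibly polynomial, hence real analytic. We have $\mu(b,c) = a$. The claim will follow from the inverse function theorem once I show that the differential $d\mu_{(b,c)} : \R^k \times \R^\ell \to \R^d$ is an isomorphism, since then $\mu$ is a local analytic diffeomorphism near $(b,c)$ and its local inverse $P \mapsto (b(P),c(P))$ supplies the desired analytic maps with $\mu(b(P),c(P)) = P$, i.e.\ $P = P_{b(P)} P_{c(P)}$, and $(b(P_a),c(P_a)) = (b,c)$ gives the initial values.

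The heart of the matter — and the only nontrivial step — is the invertibility of $d\mu_{(b,c)}$. Since source and target both have dimension $d$, it suffices to check injectivity. A tangent vector at $(b,c)$ corresponds to a pair of polynomials $(\beta(Z),\gamma(Z))$ with $\deg \beta < k$ and $\deg \gamma < \ell$ (the perturbations of the non-leading coefficients), and one computes
\[
    d\mu_{(b,c)}(\beta,\gamma) \;\longleftrightarrow\; \beta(Z) P_c(Z) + P_b(Z) \gamma(Z),
\]
a polynomial of degree $< d$. So I must show: if $\beta P_c + P_b \gamma = 0$ with $\deg\beta < \deg P_b$ and $\deg\gamma < \deg P_c$, then $\beta = \gamma = 0$. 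This is exactly where the hypothesis that $P_b$ and $P_c$ have no common complex root enters: from $\beta P_c = -P_b \gamma$ and $\gcd(P_b,P_c)=1$ in $\C[Z]$, unique factorization forces $P_b \mid \beta$; but $\deg \beta < \deg P_b$, so $\beta = 0$, and then $P_b\gamma = 0$ gives $\gamma = 0$. (Equivalently, one recognizes $d\mu_{(b,c)}$ as a Sylvester-type map whose determinant is, up to sign, the resultant $\operatorname{Res}(P_b,P_c)$, which is nonzero precisely when the two polynomials are coprime.)

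Finally I would record that the inverse function theorem in the real-analytic category does apply: $\mu$ is a polynomial map between open subsets of $\R^d$ with invertible differential at $(b,c)$, so there are open neighborhoods on which $\mu$ has a real-analytic inverse; composing the component maps of this inverse with the canonical identifications gives the analytic mappings $P \mapsto b(P) \in \R^{\deg P_b}$ and $P \mapsto c(P) \in \R^{\deg P_c}$ defined for $P$ near $P_a$, satisfying $P = P_{b(P)} P_{c(P)}$ and taking the prescribed values at $P_a$. I do not anticipate any obstacle beyond the coprimality/resultant computation above; the rest is bookkeeping with degrees and the standard inverse function theorem.
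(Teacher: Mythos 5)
Your proof is correct and fills in precisely the argument the paper has in mind: the paper gives no proof, citing the lemma as ``a simple consequence of the inverse function theorem,'' and your write-up is exactly the standard way to make that phrase precise. The multiplication map $\mu(\beta,\gamma)=P_\beta P_\gamma$ on (non-leading) coefficient vectors is polynomial, its differential at $(b,c)$ is $(\beta,\gamma)\mapsto \beta P_c + P_b\gamma$ (the Sylvester map), and injectivity of that map under $\gcd(P_b,P_c)=1$ is the crucial step — your divisibility/degree argument handles it cleanly, and the observation that $\det d\mu_{(b,c)} = \pm\operatorname{Res}(P_b,P_c)$ is the usual shortcut. Since the source and target both have dimension $\deg P_a$, injectivity gives invertibility, and the real-analytic inverse function theorem produces the local analytic maps $P\mapsto b(P)$, $P\mapsto c(P)$ with $P=P_{b(P)}P_{c(P)}$ and the prescribed values at $P_a$. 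Nothing is missing.
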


\begin{proof}
The product $P_{\aa} = P_{\bb} P_{\cc}$ defines on the coefficients a polynomial map 
$\vh$ such that $\aa = \vh(\bb,\cc)$.
Its Jacobian determinant 
equals the resultant of $P_{\bb}$ and $P_{\cc}$ which is nonzero, by assumption.
    Thus $\vh$ can be inverted locally, by the inverse function theorem.
\end{proof}

Let $P_{\tilde \aa} \in \Hyp_T(d)$ be such that $\tilde \aa \ne 0$, equivalently, $\tilde a_2 \ne 0$.
Then the polynomial
\[
    Q_{\ul \aa}(Z):= |\tilde a_2|^{-d/2} P_{\tilde \aa}(|\tilde a_2|^{1/2} Z) = 
    Z^d - Z^{d-2} + \sum_{j=3}^d |\tilde a_2|^{-j/2} \tilde a_j Z^{d-j}
\]    
belongs to $\Hyp_T^0(d)$.
By \Cref{lem:splitting},
we have a splitting 
\[
    Q_{\ul \aa} = Q_{\ul \bb} Q_{\ul \cc},
\]
on some open neighborhood $\ul U \subseteq \R^d$ of $\ul \aa$
such that $d_{\bb} := \deg Q_{\ul \bb} <d$, $d_{\cc}:=\deg Q_{\ul \cc} <d$, and
\[
\ul b_i = \ps_i(|\tilde a_2|^{-3/2}\tilde a_3,\ldots,|\tilde a_2|^{-d/2}\tilde a_d), \quad i = 1,\dots, \deg Q_{\ul \bb},
\]
where $\ps_i$ are real analytic functions; likewise for $\ul c_i$. 
If $Q_{\ul \aa}$ is hyperbolic, then also $Q_{\ul \bb}$ and $Q_{\ul \cc}$ are hyperbolic.
If $\ul \la_1 \le \cdots \le \ul \la_d$ are the roots of $Q_{\ul \aa}$, then 
we may assume that, on 
$\ul U \cap \Hyp_T^0(d)$,
$\ul \la_1 \le \cdots \le \ul \la_{d_\bb}$ are the roots of $Q_{\ul \bb}$ and 
$\ul \la_{d_\bb+1} \le \cdots \le \ul \la_{d}$ are the roots of $Q_{\ul \cc}$; 
this follows from continuity of the map $\la^\uparrow$ and 
the simple topology of $\on{Hyp}_T^0(d)$ (induced by the embedding in $\R^d$); cf.\ \cite[Theorem 8.1]{Parusinski:2023ab}.

The splitting $Q_{\ul \aa} = Q_{\ul \bb} Q_{\ul \cc}$ induces a splitting
\[
    P_{\tilde \aa} = P_{\bb} P_{\cc}
\]
on an open neighborhood $\tilde U$ of $\tilde \aa$, 
where 
\begin{equation} \label{eq:formulasbhyp}
    b_i = |\tilde a_2|^{i/2} \ps_i(|\tilde a_2|^{-3/2}\tilde a_3,\ldots,|\tilde a_2|^{-d/2}\tilde a_d), \quad i = 1,\dots, \deg P_{\bb}.
\end{equation}
The coefficients $\tilde b_i$ of $P_{\tilde \bb}$, resulting from $P_{\bb}$ by the Tschirnhausen transformation, 
have an analogous representation, i.e., 
\begin{equation} \label{eq:formulasbtihyp}
    \tilde b_i = |\tilde a_2|^{i/2} \tilde \ps_i(|\tilde a_2|^{-3/2}\tilde a_3,\ldots,|\tilde a_2|^{-d/2}\tilde a_d), \quad i = 1,\dots, \deg P_{\bb}.
\end{equation} 
Shrinking $\tilde U$ slightly, we may assume that all partial derivatives of all orders of the real analytic functions $\ps_i$ 
and $\tilde \ps_i$ are bounded on $\tilde U$.

Furthermore, since the roots of $P_{\tilde \aa}$ are given by $\la_j :=|\tilde a_2|^{1/2} \cdot \ul \la_j$, 
for $1\le j \le d$, 
we have that, 
on 
$\tilde U \cap \Hyp_T(d)$,
$\la_1 \le \cdots \le \la_{d_{\bb}}$ are the roots of $P_{\bb}$ and 
$\la_{d_{\bb}+1} \le \cdots \le  \la_{d}$ are the roots of $P_{\cc}$.

\begin{lemma}[{\cite[Lemma 3.13]{Parusinski:2023ab}}] \label[l]{lem:b2a2hyp}
    In this situation, we have $|\tilde b_2| \le 4\, |\tilde a_2|$.  
\end{lemma}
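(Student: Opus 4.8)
The plan is to express both $\tilde a_2$ and $\tilde b_2$ through the roots $\la_1 \le \cdots \le \la_d$ of $P_{\tilde a}$ and then to invoke the variational characterization of the arithmetic mean. Since $P_{\tilde a} = P_b P_c$ with $P_b$ monic and $P_{\tilde a}$ hyperbolic on $B(\tilde a,r) \cap \Hyp_T(d)$, the roots of $P_b$ (counted with multiplicities) form a subset of $\{\la_1,\ldots,\la_d\}$; as recalled above, they are precisely $\la_1,\ldots,\la_{d_b}$, where $d_b = \deg P_b$.

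First I would use the identity $-2\tilde a_2 = \sum_{j=1}^d \la_j^2$, valid for any polynomial in Tschirnhausen form. Applying the Tschirnhausen transformation to $P_b$ translates its roots by their mean $\bar\la := \tfrac{1}{d_b}\sum_{j=1}^{d_b} \la_j$, so $P_{\tilde b}$ has roots $\la_j - \bar\la$ for $1 \le j \le d_b$, and hence $-2\tilde b_2 = \sum_{j=1}^{d_b}(\la_j - \bar\la)^2$. Both $\tilde a_2$ and $\tilde b_2$ are $\le 0$, since $P_{\tilde a}$ and --- by hyperbolicity of $P_b$ --- also $P_{\tilde b}$ are hyperbolic polynomials in Tschirnhausen form.

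Since $t \mapsto \sum_{j=1}^{d_b}(\la_j - t)^2$ is minimized at $t = \bar\la$, this yields
\[
    -2\tilde b_2 = \sum_{j=1}^{d_b}(\la_j - \bar\la)^2 \le \sum_{j=1}^{d_b} \la_j^2 \le \sum_{j=1}^{d} \la_j^2 = -2\tilde a_2,
\]
so that $|\tilde b_2| = -\tilde b_2 \le -\tilde a_2 = |\tilde a_2| \le 4\,|\tilde a_2|$. (Alternatively, one may rescale first: $Q_{\ul b}$ is a monic factor of $Q_{\ul a} \in \Hyp_T^0(d)$, the same computation gives $|\ul{\tilde b}_2| \le 1$, and undoing the dilation by $|\tilde a_2|^{1/2}$ multiplies the second Tschirnhausen coefficient by $|\tilde a_2|$.) I do not expect a genuine obstacle here: the only delicate point is that the roots of $P_b$ are the $d_b$ smallest roots of $P_{\tilde a}$ throughout $B(\tilde a,r)\cap \Hyp_T(d)$, which has already been arranged above via continuity of $\la^\uparrow$ and the simple topology of $\Hyp_T(d)$. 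In fact the argument produces the sharp constant $1$; the weaker bound with the factor $4$ is all that is needed later and leaves comfortable room to spare.
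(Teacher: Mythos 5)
Your proof is correct and in fact establishes the sharper constant $1$ rather than $4$. The key identity $-2\tilde b_2 = \sum_{j=1}^{d_b}(\la_j-\bar\la)^2$ (valid because $P_{\tilde b}$ is hyperbolic in Tschirnhausen form), combined with the observation that $\bar\la$ minimizes $t\mapsto\sum_{j=1}^{d_b}(\la_j-t)^2$ and that the roots of $P_b$ are a sub-multiset of those of $P_{\tilde a}$, gives
\[
-2\tilde b_2 \;=\; \sum_{j=1}^{d_b}(\la_j-\bar\la)^2 \;\le\; \sum_{j=1}^{d_b}\la_j^2 \;\le\; \sum_{j=1}^{d}\la_j^2 \;=\; -2\tilde a_2,
\]
which is exactly what you wrote. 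Worth noting: your argument does not actually need the roots of $P_b$ to be the $d_b$ \emph{smallest} roots of $P_{\tilde a}$ (only that they form a subcollection), so the appeal to the arranged ordering is a harmless but unnecessary crutch. The present paper only cites the statement from an external source, so a direct comparison is not possible, but the cited constant $4$ is almost certainly the artefact of a cruder estimate (e.g.\ $(\la_j-\bar\la)^2\le 2\la_j^2 + 2\bar\la^2$ followed by Cauchy--Schwarz on $\bar\la^2 \le \tfrac1{d_b}\sum\la_j^2$, which yields $-2\tilde b_2 \le 4\sum\la_j^2$). Your route through the variational characterization of the mean is cleaner, fully rigorous, and delivers the optimal constant; since only boundedness by some $C(d)|\tilde a_2|$ is used downstream, both versions serve equally well.
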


\begin{proof}
    Using \eqref{eq:sos} and $|b_1| \le \sum_{j=1}^{d_\bb}|\la_j| \le \sqrt d_\bb\, \big(\sum_{j=1}^{d_\bb} \la_j^2\big)^{1/2}$, we find
    \begin{align*}
        2 \, |\tilde b_2| &= \sum_{j=1}^{d_\bb} \Big(\la_j + \frac{b_1}{d_\bb}\Big)^2
        = \sum_{j=1}^{d_\bb} \la_j^2  + \frac{2b_1}{d_\bb} \sum_{j=1}^{d_\bb} \la_j + \frac{b_1^2}{d_\bb} \le (1 + 2 + 1) \sum_{j=1}^{d_\bb} \la_j^2 \le 8\, |\tilde a_2|.
    \end{align*}
\end{proof}

\subsection{Universal splitting}

For each $d \ge 2$ fix the following data.    
Choose a finite cover of $\Hyp_T^0(d)$ by open sets $\ul U_1,\ldots,\ul U_s$ 
such that on each $\ul U_i$ we have a splitting 
$Q_{\ul \aa} = Q_{\ul \bb}Q_{\ul \cc}$ and, consequently, a splitting
$P_{\tilde \aa} = P_{\bb} P_{\cc}$ as above
together with analytic functions $\ps_i$ and $\tilde \ps_i$, 
and we fix this splitting. As seen above, we may assume that the roots $\la_1 \le \cdots \le \la_d$ of $P_{\tilde \aa}$ 
are labelled such that $\la_1 \le \cdots \le \la_{d_{\bb}}$ are the roots of $P_{\bb}$ and 
$\la_{d_{\bb}+1} \le \cdots \le  \la_{d}$ are the roots of $P_{\cc}$.

By the Lebesgue covering lemma, there exists $\de>0$ such that each subset of $\Hyp_T^0(d)$ 
of diameter less that $\de$ is contained in some $\ul U_i$. 
Choose $r \in (0, \min\{\de/2,1\})$.
Then 
for each $\ul {\mathsf p} \in \Hyp_T^0(d)$
there exists $1 \le i \le s$ with
\begin{equation} \label{eq:radius}
    B(\ul {\mathsf p}, r) \cap \Hyp_T^0(d) \subseteq \ul U_i\cap \Hyp_T^0(d).
\end{equation}

\begin{definition}[Universal splitting] \label[d]{def:univsplit}
    We refer to this data as a \emph{universal splitting of hyperbolic polynomials of degree $d$ in 
    Tschirnhausen form} and to $r$ as the \emph{radius of the splitting}.
\end{definition}

\section{Bronshtein's theorem and a variant at a single point} \label{sec:Bronshtein}

We recall Bronshtein's theorem in \Cref{thm:Bronshtein}.
We shall need a version at a single point with a suitable bound for the 
derivative of the roots. This version is given in \Cref{thm:ptwkey}.

\subsection{Bronshtein's theorem}

The following result is a version of Bronshtein's theorem \cite{Bronshtein79} 
with uniform bounds
due to \cite{ParusinskiRainerHyp}, see also \cite[Theorem 3.2]{Parusinski:2023ab}.

\begin{theorem} \label[t]{thm:Bronshtein}
    Let $I \subseteq \R$ be an open interval 
    and $\aa \in C^{d-1,1}(I,\on{Hyp}(d))$.     
    Then any continuous root $\la \in C^0(I)$ of $P_{\aa}$ is locally Lipschitz 
    and, for any pair of relatively compact open intervals $I_0 \Subset I_1 \Subset I$,
    \begin{equation} \label{eq:Lipconst}
        |\la|_{C^{0,1}(\ol I_0)}\le C\, \max_{1 \le j \le d} \|a_j\|^{1/j}_{C^{d-1,1}(\ol I_1)},
    \end{equation}
    with $C = C(d) \, \max \{\delta ^{-1}, 1\}$, where $\de := \on{dist}(I_0, \R \setminus I_1)$. 
\end{theorem}

A multiparameter version follows easily; see \cite{ParusinskiRainerHyp} and \cite[Theorem 3.4]{Parusinski:2023ab}.

Note that Wakabayashi~\cite{Wakabayashi86} proved a H\"older version of \Cref{thm:Bronshtein} 
(without uniform bounds of the type \eqref{eq:Lipconst}), 
see also Tarama \cite{Tarama06} for a different proof.

\subsection{Reclusive points}

The local version of Bronshtein's theorem, \Cref{thm:ptwkey}, holds at all points of $I$ except for a countable subset of points, 
which we call reclusive points. A point $x\in I$ is reclusive if either all the roots of $P_{\tilde \aa(x)}$ are zero and $x$ is 
isolated for this property, or it satisfies a similar condition for one of the local factors of $P_{\tilde \aa(x)}$, 
see \Cref{def:recpoints} for a precise formulation. 

\begin{definition}[Zero sets]
    Let $\tilde \aa : I \to \Hyp_T(d) \subseteq \R^d$. We consider the zero set
    \[
        Z_{\tilde \aa} := \{x \in I : \tilde \aa(x) = 0\}= \{x\in I : \text{ all roots of $P_{ \aa(x)}$ coincide}\} 
    \]
    which coincides with $Z_{\tilde a_2} = \{x \in I : \tilde a_2(x) = 0\}$, by \Cref{lem:dominant}. 
    (For notational simplicity, we will generally use $Z_{\tilde \aa}$.)
    We write $\on{acc}(Z_{\tilde \aa})$ and $\on{iso}(Z_{\tilde \aa}) := Z_{\tilde \aa} \setminus \on{acc}(Z_{\tilde \aa})$ 
    for the sets of accumulation points and isolated points of $Z_{\tilde \aa}$, respectively.
\end{definition}

\begin{lemma} \label[l]{l:zero}
    Let $\tilde \aa : I \to \Hyp_T(d) \subseteq \R^d$.
    Then 
    \[
        Z_{\tilde \aa} = \{x \in I : \text{ all roots of } P_{\tilde \aa(x)} \text{ vanish}\}.
    \]
\end{lemma}

\begin{proof}
    Since $Z_{\tilde \aa} = Z_{\tilde a_2}$, this is immediate from \eqref{eq:sos}. 
\end{proof}

Let $I \subseteq \R$ be an open interval
and $\tilde \aa \in C^{d-1,1}(I,\on{Hyp}_T(d))$;
recall that this means $\tilde \aa \in C^{d-1,1}(I,\R^d)$ and $\tilde \aa(I) \subseteq \on{Hyp}_T(d)$.
Let $x_0 \in I$ be such that $\tilde a_2(x_0) \ne 0$.
Then not all roots of $P_{\tilde \aa(x_0)}$ coincide and hence $P_{\tilde \aa}$ splits in a neighborhood of $x_0$. We may assume that it is a \emph{full splitting}, i.e.,
if $\{\la_1,\ldots,\la_k\}$ are the distinct roots of $P_{\tilde \aa(x_0)}$ with multiplicities $\{m_1,\ldots,m_k\}$ 
then 
\begin{equation} \label{eq:fullsplit}
    P_{\tilde \aa} = P_{\bb_1} P_{\bb_2} \cdots P_{\bb_k} \quad \text{ in a neighborhood of } x_0,
\end{equation}
where $\deg P_{\bb_j} = m_j$ and $P_{\bb_j(x_0)}(Z) = (Z-\la_j)^{m_j}$, for all $1 \le j \le k$.
Note that the full splitting is unique up to the order of the factors.
Since the Tschirnhausen transformation $\bb_j \leadsto \tilde \bb_j$ effects a shift of the roots by $b_{j,1}/m_j = -\la_j$, 
we have $x_0 \in Z_{\tilde \bb_j}$, for all $1 \le j \le k$.

\begin{definition}[Reclusive points] \label[d]{def:recpoints}
Let $\tilde \aa \in C^{d-1,1}(I,\on{Hyp}_T(d))$.  
We say that $x_0 \in I$ is \emph{reclusive for $\tilde \aa$} if 
\begin{itemize}
    \item $x_0 \in \on{iso}(Z_{\tilde \aa})$, 
    \item or $x_0 \not \in Z_{\tilde \aa}$ and 
        $x_0 \in \on{iso}(Z_{\tilde \bb_{j}})$ for some $j \in \{1\,\ldots,k\}$, 
        where we refer to the full splitting \eqref{eq:fullsplit}.
\end{itemize}
\end{definition}

Note that, by \Cref{l:zero}, $x_0$ is an isolated point of $Z_{\tilde \bb_{j}}$ if and only if $x_0$ is an isolated point of 
\[
    E_{\bb_j}:=  \{x : \text{ all roots of } P_{\bb_j(x)} \text{ coincide} \}.
\]

\begin{lemma} \label[l]{lem:splitrec}
    Let $I \subseteq \R$ be an open interval
    and $\tilde \aa \in C^{d-1,1}(I,\on{Hyp}_T(d))$.
    Let $x_0 \in I$ be such that $\tilde a_2(x_0) \ne 0$ and assume that $x_0$ is not reclusive for $\tilde \aa$. 
    If $P_{\tilde \aa} = P_{\bb}P_{\cc}$ is any splitting near $x_0$, 
    then $x_0$ is not reclusive for $\tilde \bb$ and $\tilde \cc$ (which result from $\bb$ and $\cc$ by 
     the Tschirnhausen transformation).
\end{lemma}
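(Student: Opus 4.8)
The plan is to reduce everything to the definition of reclusive, using the fact that the full splitting of $P_{\tilde a}$ at $x_0$ refines any given splitting $P_{\tilde a} = P_b P_c$ near $x_0$. Write $\{\la_1,\dots,\la_k\}$ for the distinct roots of $P_{\tilde a(x_0)}$ with multiplicities $m_1,\dots,m_k$, and let $P_{\tilde a} = P_{b_1}\cdots P_{b_k}$ be the full splitting as in \eqref{eq:fullsplit}, so that $\tilde b_{j,2}(x_0) = 0$ for each $j$. Since $P_b$ and $P_c$ have no common root at $x_0$, each factor $P_{b_j}$ of the full splitting is (after shrinking the neighborhood) a factor of either $P_b$ or $P_c$; hence, by uniqueness of the splitting from \Cref{lem:splitting}, $P_b = \prod_{j \in A} P_{b_j}$ and $P_c = \prod_{j \in A^c} P_{b_j}$ for a suitable partition $A \sqcup A^c = \{1,\dots,k\}$. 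In particular the full splitting of $P_{\tilde b}$ at $x_0$ is exactly $\{P_{b_j} : j \in A\}$ (up to the Tschirnhausen shift, which does not change roots), and similarly for $\tilde c$.

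Next I would unwind what "not reclusive for $\tilde a$ at $x_0$" gives us. Since $\tilde a_2(x_0) \ne 0$ by hypothesis, being non-reclusive means: for every $j \in \{1,\dots,k\}$, $x_0$ is \emph{not} an isolated point of $E_{b_j} = \{x : \text{all roots of } P_{b_j(x)} \text{ coincide}\}$, equivalently not an isolated point of $Z_{\tilde b_{j,2}}$. Now suppose, for contradiction, that $x_0$ were reclusive for $\tilde b$. Because $\tilde b_2(x_0) \ne 0$ as well (this follows from \Cref{lem:b2a2hyp} together with $\tilde a_2(x_0)\ne 0$; one needs to check $\tilde b_2(x_0) \ne 0$ — see below), reclusiveness for $\tilde b$ would have to come from the second bullet: $x_0$ is an isolated point of $Z_{\tilde b'_{j,2}}$ for some factor $P_{b'_j}$ in the full splitting of $P_{\tilde b}$ at $x_0$. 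But by the previous paragraph that full splitting consists precisely of the $P_{b_j}$, $j \in A$, so $x_0$ would be an isolated point of $Z_{\tilde b_{j,2}}$ for some $j \in A$ — contradicting non-reclusiveness of $\tilde a$. The same argument applies to $\tilde c$ with $A^c$.

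The main obstacle I anticipate is the bookkeeping around the Tschirnhausen shift and the normalization $\tilde a_2(x_0)\ne 0$: one must verify that the condition "$\tilde a_2(x_0)\ne 0$" carries over to "$\tilde b_2(x_0)\ne 0$" so that we are genuinely in the second bullet of the definition for $\tilde b$, and here \Cref{lem:b2a2hyp} gives $|\tilde b_2| \le 4|\tilde a_2|$ which is the wrong direction — so instead one should argue directly: if $\tilde b_2(x_0) = 0$ then, since $\tilde b$ is hyperbolic in Tschirnhausen form, \Cref{lem:dominant} forces all roots of $P_{\tilde b(x_0)}$, hence all $\la_j$ with $j \in A$, to be equal to the common center, while $\tilde a_2(x_0) \ne 0$ with \Cref{lem:dominant} for $\tilde a$ rules out $\tilde a$ itself being trivial; a short argument comparing the root configurations (using that $\tilde a$, $\tilde b$, $\tilde c$ are all in Tschirnhausen form so their roots are centered, i.e.\ sum to zero) shows $\tilde b_2(x_0) = 0$ would actually be possible in general, so the cleaner route is: note that $x_0$ is reclusive for $\tilde b$ iff $x_0$ is an isolated point of $E_{\tilde b} := \bigcup_{j\in A}\{x : P_{b_j(x)} \text{ has a single root}\}$ (this is the remark after the definition, applied to $\tilde b$), and this set is $\bigcup_{j \in A} E_{b_j}$, an isolated point of which would be an isolated point of some $E_{b_j}$ — which is exactly excluded. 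This last reformulation sidesteps the Tschirnhausen-normalization issue entirely, and I would organize the write-up around it. Finally one checks that the case $\tilde a_2(x_0) = 0$ does not arise here since it is explicitly excluded by hypothesis, and that isolatedness is a local notion so shrinking neighborhoods is harmless. $\qed$
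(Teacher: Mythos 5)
Your structural setup is right: the full splitting of $P_{\tilde a}$ at $x_0$ refines the given splitting $P_{\tilde a}=P_bP_c$ into factors indexed by $A\sqcup A^c=\{1,\ldots,k\}$, the Tschirnhausen shift preserves multiplicity structure so that the full splitting factors of $\tilde b$ satisfy $E_{\hat b_i}=E_{b_i}$, and non-reclusiveness of $\tilde a$ (together with $\tilde a_2(x_0)\neq 0$) gives that $x_0$ is not isolated in $E_{b_j}$ for \emph{any} $j$. You also correctly flag that one cannot simply claim $\tilde b_2(x_0)\neq 0$.

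However, the ``cleaner route'' you then propose is not valid, and it is precisely the step on which your whole proof hinges. You assert that $x_0$ is reclusive for $\tilde b$ if and only if $x_0$ is an isolated point of the \emph{union} $\bigcup_{j\in A}E_{b_j}$. This equivalence is false when $|A|\geq 2$ (equivalently $\tilde b_2(x_0)\neq 0$): the second bullet of the definition of reclusive requires isolation in \emph{some one} $Z_{\tilde b_{i,2}}$, equivalently some one $E_{b_i}$. If, say, $|A|=2$ with $x_0$ isolated in $E_{b_1}$ but $E_{b_2}$ accumulating at $x_0$, then $\tilde b$ is reclusive at $x_0$ yet $x_0$ is not isolated in $E_{b_1}\cup E_{b_2}$. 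So ``reclusive $\Rightarrow$ isolated in the union'' fails, and therefore your contrapositive ``not isolated in the union $\Rightarrow$ not reclusive'' is unsound. Your chain of deductions derives only the weaker fact ``$x_0$ not isolated in $\bigcup_{j\in A}E_{b_j}$,'' which does not by itself rule out reclusiveness of $\tilde b$; you need the stronger fact you already have in hand (non-isolation in \emph{each} $E_{b_j}$) fed into the actual two-case definition. The paper does exactly this: if $\tilde b_2(x_0)\neq 0$, the second bullet forces $x_0$ isolated in some $E_{\hat b_i}=E_{b_i}$, contradiction; if $x_0$ is isolated in $Z_{\tilde b_2}$, then $\tilde b_2(x_0)=0$ forces $|A|=1$, so $P_b=P_{b_1}$, $Z_{\tilde b_2}=Z_{\tilde b_{1,2}}$, and the paper's remark after the definition converts this into $x_0$ isolated in $E_{b_1}$, again a contradiction. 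Replacing your false ``iff'' with this short case split repairs the proof.
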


\begin{proof}
    After possibly reordering the factors in \eqref{eq:fullsplit}, 
    we may assume that, in a neighborhood of $x_0$,
    \[
        P_{\bb} = P_{\bb_1}\cdots P_{\bb_j} \quad \text{ and } \quad P_{\cc} = P_{\bb_{j+1}}\cdots P_{\bb_k}.
    \]
    The Tschirnhausen transformation $\bb \leadsto \tilde \bb$ effects a shift on all roots of 
    $P_{\bb}$ by $b_1/\deg P_{\bb}$ and 
    retains the splitting,
    \[
        P_{\tilde \bb} = P_{\hat \bb_1}\cdots P_{\hat \bb_j}.
    \]
    It follows that $E_{\hat \bb_i} = E_{\bb_i}$ for all $1\le i \le j$.
    Suppose for contradiction that $x_0$ is reclusive for $\tilde \bb$.
    If $x_0$ is an isolated point of $Z_{\tilde \bb}$, then $j=1$, by \Cref{l:zero}, and hence $x_0$ is reclusive for $\tilde \aa$.
    If $x_0 \not\in Z_{\tilde \bb}$ and there is $i \in \{1,\ldots,j\}$ such that $x_0$ is an isolated point 
    of $E_{\hat \bb_i} = E_{\bb_i}$, 
    then again $x_0$ is reclusive for $\tilde \aa$. Since we assumed that $x_0$ is not reclusive for $\tilde \aa$, 
we conclude that $x_0$ is not reclusive for $\tilde \bb$.

The proof that $x_0$ is not reclusive for $\tilde \cc$ is analogous.
\end{proof}

\begin{lemma} \label[l]{lem:countable}
    Let $I \subseteq \R$ be an open interval
    and $\tilde \aa \in C^{d-1,1}(I,\on{Hyp}_T(d))$.
    The set of all $x_0 \in I$ that are reclusive for $\tilde \aa$ is countable.
\end{lemma}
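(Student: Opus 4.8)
The claim is that the set $R$ of points reclusive for $\tilde a$ is countable. The key observation is that a point is reclusive precisely when it is an \emph{isolated} point of one of the zero sets $Z_{\tilde a_2}$ or (on the open set where $\tilde a_2 \ne 0$) one of the sets $E_{b_j}$ attached to a local full splitting. Isolated points of any subset of $\R$ form a countable set — a standard fact: around each isolated point one can pick a rational open interval meeting the set only in that point, and distinct isolated points give distinct such intervals. So each ``source'' of reclusive points contributes only countably many, and the whole difficulty is to bound the number of distinct sources and glue the local pictures into a global one.

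First I would dispose of the first bullet: the set of isolated points of $Z_{\tilde a_2} \subseteq I$ is countable by the elementary fact above. Second, I would handle the second bullet on the open set $V := I \setminus Z_{\tilde a_2} = \{x \in I : \tilde a_2(x) \ne 0\}$. The plan is to cover $V$ by countably many open intervals $J$ on each of which $P_{\tilde a}$ admits a fixed full splitting $P_{\tilde a} = P_{b_1}\cdots P_{b_k}$ as in \eqref{eq:fullsplit} (with $k$ and the structure of the splitting constant on $J$). Such a countable cover exists because $V$ is an open subset of $\R$, hence a countable union of open intervals, and on each component one can further subdivide into countably many subintervals on which a splitting persists — using \Cref{lem:splitting} together with the fact that, away from $Z_{\tilde a_2}$, after normalizing by $|\tilde a_2|^{1/2}$ one lands in the compact space $\on{Hyp}_T^0(d)$ and a universal splitting (\Cref{def:univsplit}) is available locally; one then refines to a locally finite, hence countable, cover. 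On each such interval $J$, a point $x_0 \in J$ is reclusive via the second bullet only if $x_0$ is an isolated point of $E_{b_j} \cap J$ for one of the finitely many $j \in \{1,\dots,k\}$; each of these finitely many sets contributes only countably many isolated points, so $R \cap J$ is countable, and therefore $R \cap V = \bigcup_J (R \cap J)$ is a countable union of countable sets, hence countable.

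Combining, $R = (\text{isolated points of } Z_{\tilde a_2}) \cup (R \cap V)$ is a union of two countable sets, hence countable, which is the assertion.

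The main obstacle is the bookkeeping in the second step: making precise that $V$ can be covered by countably many intervals on each of which a \emph{single} full splitting into a fixed number $k$ of local factors is valid, so that ``isolated point of $E_{b_j}$'' refers, on that interval, to finitely many honestly-defined sets. The mechanism is already present in the paper — \Cref{lem:splitting}, the normalization to $\on{Hyp}_T^0(d)$, and the universal splitting of \Cref{def:univsplit} — so the argument is a matter of assembling these with a Lindelöf-type countability argument for open subsets of $\R$; no new analytic input is needed. One should also note \Cref{lem:splitrec}, which guarantees that reclusiveness is compatible with refining the splitting, so that passing between the ``full splitting'' picture and any coarser splitting on a subinterval does not create or destroy reclusive points — this is what makes the local-to-global passage consistent.
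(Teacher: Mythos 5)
There is a genuine gap in the key step of your Step 2. You cover $V = I \setminus Z_{\tilde a_2}$ by countably many intervals $J$ carrying a fixed splitting $P_{\tilde a}=P_{b_1}\cdots P_{b_k}$, and then assert that a point $x_1 \in J$ reclusive via the second bullet must be an isolated point of one of the finitely many sets $E_{b_j}$. That assertion is false in general. The definition of ``reclusive at $x_1$'' refers to the \emph{full} splitting of $P_{\tilde a}$ at $x_1$, which is typically strictly finer than the splitting chosen on $J$ (which is full only at the point you centered $J$ on). If a factor $P_{b_j}$ of the $J$-splitting splits further at $x_1$, say $P_{b_j}=P_{\hat b_{l_1}}P_{\hat b_{l_2}}\cdots$ near $x_1$, then $x_1\notin E_{b_j}$ at all (since $P_{b_j(x_1)}$ has several distinct roots), so $x_1$ cannot be an isolated point of $E_{b_j}$; yet $x_1$ may perfectly well be reclusive because it is an isolated point of $E_{\hat b_{l_1}}$, which is not among your finitely many sets. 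Nor can you choose the cover so that the splitting is full at every point of each $J$: within $V$ the multiplicity pattern of the roots can change on a set as wild as a Cantor set. Your appeal to \Cref{lem:splitrec} does not repair this, because that lemma runs in the opposite direction (non-reclusiveness for $\tilde a$ implies non-reclusiveness for the factors); what you would actually need is a statement controlling reclusiveness of $\tilde a$ in terms of the coarser $E_{b_j}$'s, and the missing case ($x_1\notin E_{b_j}$) would then have to be handled by a descent to $\tilde b_j$ and an induction on the degree — none of which is spelled out.

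By contrast, the paper sidesteps all of this bookkeeping with a short global argument. Writing $\la=\la^\uparrow\circ\tilde a$ and $\ell_i(x):=x_{i+1}-x_i$, it observes that any reclusive $x_0$ is an isolated point of one of the finitely many globally-defined sets $\{x\in I:\ell_{i}(\la(x))=0 \text{ for all } i\in I'\}$ with $\emptyset\ne I'\subseteq\{1,\dots,d-1\}$: for the first bullet take $I'=\{1,\dots,d-1\}$ (this set equals $Z_{\tilde a_2}$), and for the second bullet note that, near $x_0$, $E_{b_j}$ coincides with such a set for the block of consecutive indices carried by the factor $P_{b_j}$. Since there are finitely many choices of $I'$ and each set has countably many isolated points, the conclusion follows with no covering, no recursion, and no local splitting analysis. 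If you want to preserve the spirit of your approach you would need to build in an explicit induction on degree at the point where the $J$-splitting fails to be full; but the paper's gap-set formulation is strictly simpler and I would recommend adopting it.
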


\begin{proof}
    Let $\la := \sol(\tilde \aa)$. Then $\la$ is a curve in $\{y \in \R^d : y_1 \le y_2 \le \cdots \le y_d\}$.
    For $1 \le i < d$, let $\ell_{i}(y) := y_{i+1}-y_i$. 
    If $x_0 \in I$ is reclusive for $\tilde \aa$, then there exist $1 \le i_1<\cdots <i_k < d$ such that 
    $x_0$ is an isolated point of 
    \[
        \{x \in I : \ell_{i_j}(\la(x)) =0 \text{ for all } 1\le j\le k\}.
    \]
    The set of isolated points of the latter set is countable. The statement follows.  
\end{proof}

\subsection{A version of Bronshtein's theorem at a single point}

For $x_0 \in \R$ and $r>0$, let $I(x_0,r)$ denote the open interval centered at $x_0$ with radius $r$, 
\[
    I(x_0,r) := \{x \in \R : |x-x_0|<r\}.
\]
Its closure is denoted by $\ol I(x_0,r)$.

\begin{theorem} \label[t]{thm:ptwkey}
    Let $x_0 \in \R$ and $\de>0$.
    Let $\tilde \aa \in C^{d-1,1}(\ol I(x_0,\de),\on{Hyp}_T(d))$.
    Assume that $x_0$ is not reclusive for $\tilde \aa$.
    Let $\la \in C^0(I(x_0,\de))$ be a continuous root of $P_{\tilde \aa}$ and assume that $\la'(x_0)$ exists.
    Then
    \[
        |\la'(x_0)|\le C(d)\, A(\de),
    \]
    where 
    \begin{align}
        A(\de)&:= 6 \max\{A_1(\de),A_2(\de)\}, \label{eq:defAde} 
        \\
        A_1(\de) &:= \max \big\{ \de^{-1}|\tilde a_{2}(x_0)|^{1/2}, |\tilde a_{2}'|_{C^{0,1}(\ol I(x_0,\de))}^{1/2}\big\}, \nonumber
        \\
        A_2(\de) &:= \max_{2 \le j \le d} \big\{ |\tilde a_{j}^{(d-1)}|_{C^{0,1}(\ol I(x_0,\de))} \cdot \|\tilde a_{2}\|_{L^\infty(I(x_0,\de))}^{(d-j)/2}  \big\}^{1/d}. \nonumber
    \end{align}
    Here $\tilde a_2'$ denotes the first derivative of $\tilde a_2$ and 
    $\tilde a_{j}^{(d-1)}$ the derivative of order $d-1$ of $\tilde a_j$.
\end{theorem}

The proof follows the general strategy of the proof of \Cref{thm:Bronshtein} in \cite{ParusinskiRainerHyp} and \cite{Parusinski:2023ab},
but some modifications are required. 
Before we prove \Cref{thm:ptwkey} let us recall two important tools.

\subsection{Local Glaeser inequality}

Glaeser's inequality \cite{Glaeser63R} gives Bronshtein's theorem in the simplest nontrivial case:
for nonnegative $C^1$ functions $f$ on $\R$ with $f'' \in L^\infty(\R)$ we have 
\[
    f'(x)^2 \le 2\, f(x) \|f''\|_{L^\infty(\R)}, \quad x \in \R.
\]
We need a local version.

\begin{lemma}[{\cite[Lemma 3.14]{Parusinski:2023ab}}] \label[l]{lem:Glaeser}
    Let $I\subseteq \R$ be an open bounded interval.
    Let $f \in C^{1,1}(\ol I)$ satisfy $f \ge 0$ or $f \le 0$ on $I$.
    Assume that $x_0 \in I$ satisfies $f(x_0) \ne 0$
    and let $M>0$ be such that $I_0 := I(x_0,M^{-1} |f(x_0)|^{1/2}) \subseteq I$.    
    Then 
    \[
        |f'(x_0)| \le (M +M^{-1} |f'|_{C^{0,1}(\ol I_0)}) |f(x_0)|^{1/2}.
    \]
    Therefore, if additionally $|f'|_{C^{0,1}(\ol I_0)}\le M^2$, then  
    \[
        |f'(x_0)| \le 2M\, |f(x_0)|^{1/2}.
    \]
\end{lemma}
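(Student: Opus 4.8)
The plan is to run the classical one-variable argument behind Glaeser's inequality, but with the length of the integration interval pinned to the radius of $I_0$ (so that the relevant Lipschitz constant of $f'$ is the controlled quantity $|f'|_{C^{0,1}(\ol I_0)}$) instead of being optimized freely as in the global statement. First I would reduce to the case $f \ge 0$ on $I$: replacing $f$ by $-f$ leaves $|f(x_0)|$, $|f'(x_0)|$, and $|f'|_{C^{0,1}(\ol I_0)}$ unchanged and turns $f \le 0$ into $f \ge 0$. So assume $f \ge 0$. Put $L := |f'|_{C^{0,1}(\ol I_0)}$ and let $r := M^{-1}|f(x_0)|^{1/2}$ be the radius of $I_0$; by hypothesis $\ol I_0 = [x_0-r,\,x_0+r] \subseteq \ol I$, so $f'$ is $L$-Lipschitz on $\ol I_0$ and $f \ge 0$ on $\ol I_0$ by continuity.

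For the core estimate, suppose $f'(x_0)\ge 0$. For $t \in [-r,0]$ the Lipschitz bound gives $f'(x_0+t) \ge f'(x_0) + Lt$, and integrating over $[-r,0]$ together with $f(x_0-r)\ge 0$ yields
\[
    0 \le f(x_0-r) = f(x_0) - \int_{-r}^{0} f'(x_0+t)\,dt \le f(x_0) - f'(x_0)\,r + \tfrac{1}{2} L r^2 ,
\]
so $f'(x_0)\,r \le f(x_0) + \tfrac12 L r^2$. If instead $f'(x_0)\le 0$, the symmetric computation on $[0,r]$ using $f(x_0+r)\ge 0$ gives $-f'(x_0)\,r \le f(x_0) + \tfrac12 L r^2$. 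In either case $|f'(x_0)|\,r \le f(x_0) + \tfrac12 L r^2$; dividing by $r$ and substituting $r = M^{-1}|f(x_0)|^{1/2}$ and $f(x_0) = |f(x_0)|$ gives
\[
    |f'(x_0)| \le \frac{f(x_0)}{r} + \frac{Lr}{2} = M\,|f(x_0)|^{1/2} + \frac{L}{2M}\,|f(x_0)|^{1/2} \le \big(M + M^{-1}|f'|_{C^{0,1}(\ol I_0)}\big)\,|f(x_0)|^{1/2},
\]
which is the first assertion. The second is then immediate: if $|f'|_{C^{0,1}(\ol I_0)} \le M^2$ then $M^{-1}|f'|_{C^{0,1}(\ol I_0)} \le M$, so the bracket is $\le 2M$. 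Finally, when $f(x_0)=0$ the point $x_0$ is an interior minimum of the nonnegative (respectively maximum of the nonpositive) function $f$, which is differentiable there, whence $f'(x_0)=0$; alternatively this is the degenerate case $r=0$ of the inequality just proved.

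I do not expect a genuine obstacle here. The only points requiring care are: (i) using $r$ — the radius of $I_0$ — rather than the Glaeser-optimal length $|f'(x_0)|/L$ as the integration length, which is dictated by the need to keep the path inside $\ol I_0$ and is responsible for the additive form $M + M^{-1}L$ of the constant; (ii) the sign bookkeeping in the two one-sided integrations; and (iii) recording that the global one-sidedness hypothesis ($f \ge 0$ or $f \le 0$ on all of $I$) is exactly what licenses $f(x_0\mp r)\ge 0$ in the displayed inequality.
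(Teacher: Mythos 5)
Your argument is correct: the two one-sided Taylor/Lipschitz estimates over the full radius $r=M^{-1}|f(x_0)|^{1/2}$ of $I_0$, combined with $f(x_0\mp r)\ge 0$, give $|f'(x_0)|\le M|f(x_0)|^{1/2}+\tfrac12 M^{-1}|f'|_{C^{0,1}(\ol I_0)}|f(x_0)|^{1/2}$, which is even slightly sharper than the stated bound, and the remaining assertions follow as you say. The paper itself gives no proof here but cites \cite[Lemma 3.14]{Parusinski:2023ab}, whose argument is this same local Glaeser-type estimate, so your route is essentially the standard one.
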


It should be added that, for a function 
$f \in C^{1,1}(\ol I)$ satisfying $f \ge 0$ or $f \le 0$ on $I$,
we have that $f(x_0)=0$ implies $f'(x_0)=0$, so that the conclusion of the lemma also holds 
trivially at zeros $x_0$ of $f$.

\begin{proof}
    Suppose that $f\ge 0$; otherwise consider $-f$.
    Thus $f(x_0)>0$ and 
    \[
        0 \le f(x_0 + h) = f(x_0) + f'(x_0)h + \int_0^1 f'(x_0+hs) -f'(x_0) \, ds\cdot h. 
    \]
    Setting $h:= \pm M^{-1}|f(x_0)|^{1/2}$, implies the lemma.
\end{proof}

\subsection{Interpolation}

Let us recall an interpolation inequality for intermediate derivatives.

\begin{lemma}[{\cite[Lemma 3.16]{Parusinski:2023ab}}] \label[l]{lem:interpol}
    Let $f \in C^{m,1}(\ol I)$, where $I \subseteq \R$ is a bounded open interval.
    Then, for $1 \le j \le m$, 
    \[
        |f^{(j)}(x)| \le C(m)\, |I|^{-j} \big( \|f\|_{L^\infty(I)} + |f^{(m)}|_{C^{0,1}(\ol I)} |I|^{m+1}\big), \quad x \in I.
    \]
\end{lemma}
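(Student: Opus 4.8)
The plan is to prove this by first rescaling to the unit interval and then inverting a Vandermonde--Taylor linear system; this is the classical Landau--Kolmogorov-type estimate for intermediate derivatives, and on a bounded interval the only issue is to keep the constant independent of the interval.

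First I would reduce to $I=(0,1)$. Writing $I=(\alpha,\beta)$ and $L:=|I|$, set $g(s):=f(\alpha+Ls)$ for $s\in(0,1)$. Then $g\in C^{k,1}([0,1])$, $g^{(i)}(s)=L^i f^{(i)}(\alpha+Ls)$, $\|g\|_{L^\infty(0,1)}=\|f\|_{L^\infty(I)}$, and a direct computation gives $|g^{(k)}|_{C^{0,1}([0,1])}=L^{k+1}|f^{(k)}|_{C^{0,1}(\ol I)}$. Hence it suffices to establish, for a constant $C(k)$ and all $s\in(0,1)$ and $1\le j\le k$,
\[
    |g^{(j)}(s)|\le C(k)\big(\|g\|_{L^\infty(0,1)}+|g^{(k)}|_{C^{0,1}([0,1])}\big),
\]
since multiplying through by $L^{-j}$ and substituting the identities above recovers exactly the claimed inequality.

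To prove the reduced estimate, note that $g\in C^{k,1}$ means $g^{(k)}$ is Lipschitz, hence absolutely continuous, so $g^{(k+1)}$ exists a.e.\ with $M:=\|g^{(k+1)}\|_{L^\infty(0,1)}=|g^{(k)}|_{C^{0,1}([0,1])}$. Taylor's formula with integral remainder then yields, for all $s,t\in[0,1]$,
\[
    g(t)=\sum_{i=0}^{k}\frac{g^{(i)}(s)}{i!}(t-s)^i+\rho(s,t),\qquad |\rho(s,t)|\le \frac{M}{(k+1)!}\,|t-s|^{k+1}\le\frac{M}{(k+1)!}.
\]
Now fix the $k+1$ nodes $t_\ell:=\ell/k$ for $\ell=0,\ldots,k$, independently of $s$, and read the $k+1$ equations above ($\ell=0,\dots,k$) as a linear system for the unknowns $g^{(0)}(s),\ldots,g^{(k)}(s)$ with coefficient matrix $A(s):=\big(\tfrac{(t_\ell-s)^i}{i!}\big)_{0\le \ell,i\le k}$. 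The key observation is that $A(s)$ is a column-scaled Vandermonde matrix in the distinct nodes $t_\ell-s$, so
\[
    \det A(s)=\Big(\prod_{i=0}^{k} i!\Big)^{-1}\prod_{0\le \ell<\ell'\le k}(t_{\ell'}-t_\ell)
\]
is a nonzero constant independent of $s$, while every entry of $A(s)$ is bounded by $1$ for $s\in[0,1]$. By Cramer's rule the entries of $A(s)^{-1}$ are therefore bounded by a constant $C(k)$ uniformly in $s$. Solving for $g^{(j)}(s)$ and using $|g(t_\ell)|\le\|g\|_{L^\infty(0,1)}$ together with the remainder bound gives
\[
    |g^{(j)}(s)|\le\sum_{\ell=0}^{k}\big|(A(s)^{-1})_{j\ell}\big|\big(\|g\|_{L^\infty(0,1)}+|\rho(s,t_\ell)|\big)\le C(k)\big(\|g\|_{L^\infty(0,1)}+M\big),
\]
which is the reduced estimate; undoing the rescaling completes the proof.

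The only point needing care is the uniformity of $\|A(s)^{-1}\|$ in $s$, which I obtain from Cramer's rule plus the fact that $\det A(s)$ is a fixed nonzero Vandermonde determinant of the fixed nodes and the entries of $A(s)$ are uniformly bounded; no genuine obstacle arises. One could alternatively argue by induction on $k$, bootstrapping from the case $k=1$ (where one picks the endpoint of $I$ farther than $|I|/2$ from $x$ and uses the first-order Taylor estimate), but the Vandermonde inversion avoids the bookkeeping of tracking all lower-order derivatives simultaneously.
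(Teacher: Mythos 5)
Your argument is correct. Note first that the paper does not prove this lemma at all: it is quoted verbatim from \cite[Lemma 3.16]{Parusinski:2023ab}, so there is no internal proof to compare against; your proposal is a self-contained derivation of the cited estimate. The two pillars of your proof are sound. The rescaling step is exact: $g^{(i)}(s)=L^i f^{(i)}(\alpha+Ls)$ and $|g^{(k)}|_{C^{0,1}([0,1])}=L^{k+1}|f^{(k)}|_{C^{0,1}(\ol I)}$, so the reduced inequality on $(0,1)$ does recover the stated one with the factor $|I|^{-j}$. The Taylor-plus-Vandermonde step is also fine: since $g^{(k)}$ is Lipschitz (hence absolutely continuous), the order-$k$ Taylor expansion with remainder bounded by $\tfrac{M}{(k+1)!}|t-s|^{k+1}$ is justified, and the crucial observation that $\det A(s)$ is independent of $s$ is correct because the Vandermonde determinant depends only on the differences $t_{\ell'}-t_\ell$, which are translation invariant; combined with the entrywise bound $|A(s)_{\ell i}|\le 1$ and Cramer's rule this gives a uniform bound on $A(s)^{-1}$, and the conclusion follows. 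Two minor points worth making explicit: the nodes $t_0=0$, $t_k=1$ lie on the boundary, so one should use that $f\in C^{k,1}(\ol I)$ means all derivatives extend continuously to $\ol I$ (hence the Taylor identity and the bound $|g(t_\ell)|\le\|g\|_{L^\infty(0,1)}$ hold at the endpoints), and the identification $\|g^{(k+1)}\|_{L^\infty}=|g^{(k)}|_{C^{0,1}}$ (or, equivalently, the direct remainder estimate via the Lipschitz modulus) deserves the one-line justification you essentially give. Compared with the more common inductive/divided-difference proofs of such Landau--Kolmogorov-type bounds, your fixed-node matrix inversion has the advantage of producing all intermediate derivatives at once with an explicit constant, at the cost of the (correctly handled) uniformity-in-$s$ issue.
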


\begin{proof}
    Fix $x \in I$. Then $[x,x+|I|/2)$ or $(x-|I|/2,x]$ is contained in $I$.
    Let $y$ be a point in the respective interval. 
    By Taylor's formula,
    \begin{align*}
        \MoveEqLeft \Big| \sum_{j=0}^{m} \frac{f^{(j)}(x)}{j!} (y-x)^j\Big| 
        \\
        &= \Big| f(y) - (y-x)^{m} \int_0^1 \frac{(1-t)^{m-1}}{(m-1)!} (f^{(m)}(x+t(y-x)) - f^{(m)}(x)) \, dt\Big|
        \\
        &\le \|f\|_{L^\infty(I)} + |I|^{m+1} |f^{(m)}|_{C^{0,1}(\ol I)}. 
    \end{align*}
    This implies the lemma in view of the following fact:
    if a polynomial $T(x) = a_0 + a_1 x + \cdots + a_m x^m \in \C[x]$ satisfies $|T(x)| \le A$ for $x \in [0,B] \subseteq \R$,
    then 
    \[
        |a_j| \le C(m)\, A B^{-j}, \quad 0 \le j \le m.
    \]
    Indeed, for $A=B=1$ this follows easily, by comparison with the  
    interpolation polynomial for the equidistant points
    $0 = x_0 < x_1 < \cdots < x_m=1$. In the general case, consider $A^{-1}T(Bx)$.
\end{proof}

\subsection{Proof of \Cref{thm:ptwkey}}

The rest of the section is devoted to the proof of \Cref{thm:ptwkey}.
We may assume that $d \ge 2$, since \Cref{thm:ptwkey} is trivially true for $d=1$.
The following definition will prove convenient for the inductive proof based on the splitting.

We will work on intervals centered at $x_0$ whose radius depends on the size of $\tilde a_2(x_0)$. 
More precisely, assuming that $\tilde a_2(x_0) \ne 0$ we set, for any constant $A>0$,
\begin{equation} \label{eq:II}
        \II(x_0,A) := I(x_0,A^{-1} |\tilde a_2(x_0)|^{1/2})
\end{equation}
and denote by $\ol \II(x_0,A)$ the closure of $\II(x_0,A)$.

\begin{definition}[$C^{d-1,1}$-admissible data] \label[d]{def:ptadm}
    Let $x_0 \in \R$ and $\de>0$.
    Let $\tilde \aa \in C^{d-1,1}(\ol I(x_0,\de),\on{Hyp}_T(d))$ be  
    such that $\tilde a_2(x_0) \ne 0$.
    Let $A>0$ be a constant.
    We say that 
$(\tilde \aa,x_0,\de,A)$ is \emph{$C^{d-1,1}$-admissible} if
    the following holds:
    \begin{enumerate}
        \item  $\II(x_0,A)  \subseteq I(x_0,\de)$. \label{eq:A1pt}
        \item For all $x \in \II(x_0,A)$,
            \begin{equation}\label{eq:A2pt}
          \frac{1}{2} \le \frac{\tilde a_2(x)}{\tilde a_2(x_0)} \le 2.      
            \end{equation}
        \item For all $2\le j \le d$,
            \begin{equation}\label{eq:A4pt}
                |\tilde a_j^{(d-1)}|_{C^{0,1}(\ol \II(x_0,A))} \le A^d \, |\tilde a_2(x_0)|^{(j-d)/2}.
            \end{equation}
        \item For all $2\le j \le d$, $1 \le k \le d-1$, and $x \in \II(x_0,A)$,
            \begin{equation}\label{eq:A3pt}
          |\tilde a_j^{(k)}(x)| \le A^k \, |\tilde a_2(x_0)|^{(j-k)/2}.      
            \end{equation}
    \end{enumerate}
\end{definition}

\begin{lemma} \label[l]{lem:enough}
    Let $x_0 \in \R$ and $A,\de>0$.
    Let $\tilde \aa \in C^{d-1,1}(\ol I(x_0,\de),\on{Hyp}_T(d))$ be  
    such that $\tilde a_2(x_0) \ne 0$.
    Assume that
    \begin{itemize}
        \item $\II(x_0,A)  \subseteq I(x_0,\de)$,
        \item \eqref{eq:A4pt} holds, and
        \item \eqref{eq:A3pt} holds for $2 \le j \le d$ and $k \ge j$.
    \end{itemize}
    Then there is a constant $C(d)\ge 1$ such that $(\tilde \aa,x_0,\de,C(d)A)$ is $C^{d-1,1}$-admissible.
\end{lemma}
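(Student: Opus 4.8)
The plan is to set $\tilde A := C(d)\, A$ with a dimensional constant $C(d)\ge 1$ to be fixed along the way, and to verify the four clauses of \Cref{def:ptadm} for the quadruple $(\tilde a, x_0, \de, \tilde A)$. Two of them are essentially free: since $C(d)\ge 1$ we have $I_{\tilde A}(x_0)\subseteq I_A(x_0)\subseteq I(x_0,\de)$, which is \eqref{eq:A1pt}; and since the $C^{0,1}$-seminorm over the smaller interval $\ol I_{\tilde A}(x_0)$ is dominated by the one over $\ol I_A(x_0)$, the hypothesis \eqref{eq:A4pt} gives $|\tilde a_j^{(d-1)}|_{C^{0,1}(\ol I_{\tilde A}(x_0))}\le A^d|\tilde a_2(x_0)|^{(j-d)/2}\le \tilde A^d|\tilde a_2(x_0)|^{(j-d)/2}$, which is \eqref{eq:A4pt} for $(\tilde a, x_0, \de, \tilde A)$. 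So the work is in establishing \eqref{eq:A2pt} and the cases $1\le k<j$ of \eqref{eq:A3pt} that are not assumed.

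For \eqref{eq:A2pt} I would rerun the Glaeser argument from the proof of \Cref{lem:ptwass>adm}(2), with $A$ playing the role of $A_1(\de)$. First, the assumed estimates give $|\tilde a_2'|_{C^{0,1}(\ol I_A(x_0))}\le A^2$ — from \eqref{eq:A3pt} with $j=k=2$ when $d\ge 3$, and from \eqref{eq:A4pt} with $j=2$ when $d=2$. Since $P_{\tilde a(x)}$ is hyperbolic in Tschirnhausen form we have $\tilde a_2\le 0$, so \Cref{lem:Glaeser} applies to $f=\tilde a_2$ with $M=A$ and $I=I_0=I_A(x_0)$, yielding $|\tilde a_2'(x_0)|\le 2A\,|\tilde a_2(x_0)|^{1/2}$. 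A second-order Taylor estimate centered at $x_0$ then gives $|\tilde a_2(x)-\tilde a_2(x_0)|\le \tfrac12|\tilde a_2(x_0)|$ for $x\in I_{6A}(x_0)$ — this is \eqref{eq:A2pt} on $I_{6A}(x_0)$ — and, run on the larger interval $I_A(x_0)$, it gives the crude bound $\|\tilde a_2\|_{L^\infty(I_A(x_0))}\le C\,|\tilde a_2(x_0)|$, which I record for the next step. Demanding $C(d)\ge 6$ then forces $I_{\tilde A}(x_0)\subseteq I_{6A}(x_0)$, so \eqref{eq:A2pt} holds for $(\tilde a, x_0, \de, \tilde A)$.

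For the missing cases of \eqref{eq:A3pt} I would interpolate on the \emph{fixed} interval $I_A(x_0)$, not on the shrunken $I_{\tilde A}(x_0)$. Combining $\|\tilde a_2\|_{L^\infty(I_A(x_0))}\le C\,|\tilde a_2(x_0)|$ with \Cref{lem:dominant} gives $\|\tilde a_j\|_{L^\infty(I_A(x_0))}\le C(d)\,|\tilde a_2(x_0)|^{j/2}$; together with the bound on $|\tilde a_j^{(d-1)}|_{C^{0,1}(\ol I_A(x_0))}$ from \eqref{eq:A4pt} and $|I_A(x_0)|=2A^{-1}|\tilde a_2(x_0)|^{1/2}$, \Cref{lem:interpol} (applied with top order $d-1$) yields
\[
    |\tilde a_j^{(k)}(x)|\le C(d)\,A^k\,|\tilde a_2(x_0)|^{(j-k)/2},\qquad x\in I_A(x_0),\ 2\le j\le d,\ 1\le k\le d-1,
\]
with $C(d)$ a dimensional constant; note the powers of $A$ and of $|\tilde a_2(x_0)|$ produced by $|I_A(x_0)|^{-k}$ match the target exactly. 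Now enlarge $C(d)$ once more so that it also dominates this interpolation constant. Since $k\ge 1$ we have $C(d)\le C(d)^k=(\tilde A/A)^k$, hence $|\tilde a_j^{(k)}(x)|\le \tilde A^k\,|\tilde a_2(x_0)|^{(j-k)/2}$ for $x\in I_{\tilde A}(x_0)\subseteq I_A(x_0)$, which is \eqref{eq:A3pt} for $(\tilde a, x_0, \de, \tilde A)$ in the range $1\le k<j$; for $j\le k\le d-1$ it is already assumed (and $\tilde A\ge A$). This settles all four clauses.

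The only delicate point is the constant bookkeeping in the last step: the interpolation estimate is automatically of the form (dimensional constant)$\,\cdot\, A^k|\tilde a_2(x_0)|^{(j-k)/2}$, and such a constant can be absorbed into the clean power $\tilde A^k$ exactly because $k\ge 1$ and we replace $A$ by $C(d)A$ — but this absorption only works if the interpolation is performed on $I_A(x_0)$, which is why one must first extract control of $\tilde a_2$ on all of $I_A(x_0)$ (the crude bound $\|\tilde a_2\|_{L^\infty(I_A(x_0))}\le C\,|\tilde a_2(x_0)|$) before interpolating.
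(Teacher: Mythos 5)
Your proof is correct and follows essentially the same route as the paper: establish the bound $|\tilde a_2'|_{C^{0,1}(\ol I_A(x_0))}\le A^2$ from the hypotheses, invoke \Cref{lem:Glaeser} to control $\tilde a_2'(x_0)$, derive \eqref{eq:A2pt} on a shrunken interval by a Taylor estimate, and obtain the missing cases $k<j$ of \eqref{eq:A3pt} via \Cref{lem:interpol} combined with \Cref{lem:dominant}. The only cosmetic differences are that the paper applies \Cref{lem:interpol} with top order $j-1$ and interpolates on $I_{6A}(x_0)$ (where \eqref{eq:A2pt} is already known), whereas you use top order $d-1$ and first extract a crude $L^\infty$ bound on $\tilde a_2$ over all of $I_A(x_0)$ before interpolating — both choices lead to the same powers and the same absorption of the dimensional constant into $(C(d)A)^k$ using $k\ge 1$; your handling of the $d=2$ edge case (falling back on \eqref{eq:A4pt} since \eqref{eq:A3pt} with $j=k=2$ is vacuous there) is a nice bit of extra care the paper glosses over.
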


\begin{proof}
    We first observe that we have $|\tilde a_2'|_{C^{0,1}(\ol \II(x_0,A))} \le  A^2$. 
    Indeed, if $d=2$ this is immediate from \eqref{eq:A4pt}
    and if $d\ge 3$ then it follows from \eqref{eq:A3pt} with $j=k=2$.
    By Lemma \ref{lem:Glaeser}, we conclude that
    \[
        |\tilde a_2'(x_0)| \le 2  A \, |\tilde a_2(x_0)|^{1/2}.
    \]
    Thus, for $x \in \II(x_0,6A)$, 
    \begin{align*}
       |\tilde a_2(x) - \tilde a_2(x_0)| &\le |\tilde a_2'(x_0)| |x-x_0| + |\tilde a_2'|_{C^{0,1}(\ol \II(x_0,A))}\, |x-x_0|^2
      \\
                                       &\le  \frac{1}{3}\, |\tilde a_2(x_0)| + \frac{1}{36} \, |\tilde a_2(x_0)| < \frac{1}{2}\, |\tilde a_2(x_0)|,
    \end{align*}
    implying \eqref{eq:A2pt} on $\II(x_0,6A)$.

    Finally, we check that \eqref{eq:A3pt} also holds for $k<j$, for $A$ replaced by $C(d)A$, where $C(d) \ge 1$ is a suitable constant (note that $\II(x_0,C(d)A) \subseteq \II(x_0,A)$).
    By \Cref{lem:interpol}, for $1 \le k \le j-1$ and $x \in \II(x_0,6A)$,
    \begin{align*}
        |\tilde a_j^{(k)}(x)| &\le C(d)\, |\II(x_0,6A)|^{-k} \big(\|\tilde a_j\|_{L^\infty(\II(x_0,6A))} + |\tilde a_j^{(j-1)}|_{C^{0,1}(\ol \II(x_0,6A))}\, |\II(x_0,6A)|^{j}\big)
        \\
                              &\le C(d)\, (3A)^k |\tilde a_2(x_0)|^{-k/2}  \big(2^j\,|\tilde a_2(x_0)|^{j/2} + A^j \cdot (3A)^{-j} |\tilde a_2(x_0)|^{j/2}\big)
                              \\
                              &\le \tilde C(d)\, A^k\, |\tilde a_2(x_0)|^{(j-k)/2},
    \end{align*}
    since
    $|\tilde a_j(x)| \le (\sqrt 2\,|\tilde a_2(x)|^{1/2})^j \le 
    2^j\,|\tilde a_2(x_0)|^{j/2}$, by \Cref{lem:dominant} and \eqref{eq:A2pt} on $\II(x_0,6A)$,
    and
    $|\tilde a_j^{(j-1)}|_{C^{0,1}(\ol \II(x_0,6A))} \le A^j$, by \eqref{eq:A4pt} or \eqref{eq:A3pt} for $k=j$.
\end{proof}

\begin{lemma} \label[l]{lem:ptwass>adm}
    Let $x_0 \in \R$ and $\de>0$.
    Let $\tilde \aa \in C^{d-1,1}(\ol I(x_0,\de),\on{Hyp}_T(d))$ be  
    such that $\tilde a_2(x_0) \ne 0$.
    Let $A(\de)$ be defined by \eqref{eq:defAde}. 
    Then $(\tilde \aa,x_0,\de,C(d)A(\de))$ is $C^{d-1,1}$-admissible for some constant $C(d) \ge 1$.
\end{lemma}

\begin{proof}
    (1) By \eqref{eq:defAde}, $A(\de) \ge A_1(\de) \ge  \de^{-1}|\tilde a_{2}(x_0)|^{1/2}$ and thus 
    \begin{equation} \label{eq:IIincl}
        \II(x_0,A(\de)) \subseteq \II(x_0,A_1(\de)) \subseteq I(x_0,\de).
    \end{equation}

    (2)
    By \Cref{lem:Glaeser} and the definition of $A_1(\de)$, 
    \[
        |\tilde a_2'(x_0)| \le 2 A_1(\de) \, |\tilde a_2(x_0)|^{1/2}. 
    \]
    Then, for $x \in \II(x_0, 6A_1(\de))$, we find (as in the proof of \Cref{lem:enough})
    \begin{align*}
        |\tilde a_2(x) -\tilde a_2(x_0)| 
                                       \le \frac{1}{2}|\tilde a_2(x_0)|,
    \end{align*}
    using \eqref{eq:IIincl} and the definition of $A_1(\de)$,
    which implies \eqref{eq:A2pt} with $A=A(\de)$.
    
    (3) By the definition of $A_2(\de)$, \eqref{eq:A4pt} with $A=A(\de)$ is clear.

    (4)
    By \Cref{lem:dominant} and \eqref{eq:A2pt}, we have 
     $|\tilde a_j(x)| \le 
    2^j\,|\tilde a_2(x_0)|^{j/2}$, for $x \in \II(x_0,A(\de))$.
    In conjunction with \eqref{eq:A4pt}, it implies \eqref{eq:A3pt} with $A=C(d)A(\de)$, by 
    \Cref{lem:interpol}. We clearly may assume that $C(d)\ge 1$ so that $\II(x_0,C(d)A(\de)) \subseteq \II(x_0,A(\de))$.
\end{proof}

\begin{lemma} \label[l]{lem:Aconsequence}
    Let $(\tilde \aa,x_0,\de,A)$ be $C^{d-1,1}$-admissible. 
    Then the functions $\ul a_j := |\tilde a_2|^{-j/2} \tilde a_j$, $2 \le j \le d$, are well-defined and of class $C^{d-1,1}$ on $\II(x_0,A)$ and they satisfy
    \begin{align}
          &|\ul a_j^{(d-1)}|_{C^{0,1}(\ol \II(x_0,A))} \le C(d)\, A^d \, |\tilde a_2(x_0)|^{-d/2}, \quad 2 \le j \le d,    \label{eq:A6pt}
          \\
          &|\ul a_j^{(k)}(x)| \le C(d)\,  A^k \, |\tilde a_2(x_0)|^{-k/2},\quad 2\le j \le d,\, 1 \le k \le d-1,\,   x \in \II(x_0,A). \label{eq:A5pt}
    \end{align}
\end{lemma}

\begin{proof}
    By \eqref{eq:A2pt} and \eqref{eq:sos}, $|\tilde a_2| >0$ on $\II(x_0,A)$. Thus
    the functions $\ul a_j$ are well-defined and of class $C^{d-1,1}$ on $\II(x_0,A)$.

    Let $1 \le s \le d-1$ and $r \in \R$. Then 
     Fa\`a di Bruno's formula implies 
  \begin{equation} \label{FaadiBruno}
    \p^{s} \big(\tilde a_2^{r} \big) = 
    \sum_{\ell \ge 1}^{s} \sum_{\ga \in \Ga(\ell,s)} c_{\ga,\ell,r}  
    \, \tilde a_2^{r-\ell} \tilde a_2^{(\ga_1)} \cdots \tilde a_2^{(\ga_\ell)}
  \end{equation}
  where $\Ga(\ell,s) = \{\ga \in \N_{>0}^\ell : |\ga| = s\}$ and
  \[
    c_{\ga,\ell,r}=  \frac{s!}{\ell! \ga!} r(r-1) \cdots (r-\ell+1).
  \]
  Thus, by \eqref{eq:A2pt} and \eqref{eq:A3pt}, for $x \in \II(x_0,A)$,
  \begin{align} \label{eq:derpower}
    |\p^{s} \big(\tilde a_2^{r} \big)(x)| 
    &\le  
    \sum_{\ell \ge 1}^{s} \sum_{\ga \in \Ga(\ell,s)} |c_{\ga,\ell,r}|  
    \, |\tilde a_2^{r-\ell}(x)| |\tilde a_2^{(\ga_1)}(x)|
    \cdots |\tilde a_2^{(\ga_\ell)}(x)|
    \notag \\
    &\le  
    \sum_{\ell \ge 1}^{s} \sum_{\ga \in \Ga(\ell,s)} |c_{\ga,\ell,r}|  
    \, 2^{r-\ell}|\tilde a_2(x_0)|^{r-\ell} \, A^{s}\,|\tilde a_2(x_0)|^{\ell - s/2}
    \notag \\
    &=
    A^{s}\, |\tilde a_2(x_0)|^{r - s/2}
    \sum_{\ell \ge 1}^{s} \sum_{\ga \in \Ga(\ell,s)} |c_{\ga,\ell,r}|\, 2^{r -\ell}.
  \end{align}
  Consequently, by the Leibniz formula, \eqref{eq:A2pt}, and \eqref{eq:A3pt},
  \begin{align*}
      |\ul a_j^{(k)}(x)| \le \sum_{s=0}^{k} \binom{k}{s} |\p^s(|\tilde a_2|^{-j/2})(x)| |\tilde a_j^{(k-s)}(x)| \le C(d)\,A^k\, |\tilde a_2(x_0)|^{ - k/2},
  \end{align*}
  for $1 \le k \le d-1$ and $x \in \II(x_0,A)$, that is \eqref{eq:A5pt}.

  To see \eqref{eq:A6pt} it suffices to repeat the above argument, using that, for functions $f_1,\ldots,f_m$ on an interval $I$, 
  \[
      \Big|\prod_{i=1}^m f_i\Big|_{C^{0,1}(\ol I)} \le \sum_{i=1}^m |f_i|_{C^{0,1}(\ol I)} \prod_{j \ne i} \|f_j\|_{L^{\infty}(I)} 
\]
and 
\begin{equation*}
    |f^{r}|_{C^{0,1}(\ol I)} \le |r|\, \| f^{r-1} \|_{L^\infty(I)} \|f'\|_{L^\infty(I)}, 
\end{equation*}
if $f$ is differentiable.
\end{proof}

\begin{proposition} \label[p]{prop:indpt}
    Let $(\tilde \aa,x_0,\de,A)$ be $C^{d-1,1}$-admissible.
    Then there exist $\de_1>0$ and a constant $C(d)>1$ such that the following holds.
    There is a splitting 
    \[
        P_{\tilde \aa} = P_{\bb} P_{\cc}, \quad \text{ on } I(x_0,\de_1), 
    \]
    where $P_{\bb}$ and $P_{\cc}$ are monic hyperbolic polynomials of degree $<d$ with coefficients in $C^{d-1,1}(\ol I(x_0,\de_1))$. 
    We have, for all $1 \le i \le \deg P_{\bb}$,  
    \begin{align}
        &|b_i^{(d-1)}|_{C^{0,1}(\ol I(x_0,\de_1))} \le C(d) A^d \, |\tilde a_2(x_0)|^{(i-d)/2}. \label{eq:bipt2}
        \\
        &|b_i^{(k)}(x)| \le C(d) A^k \, |\tilde a_2(x_0)|^{(i-k)/2},\quad 1 \le k \le d-1,  ~ x \in I(x_0,\de_1), \label{eq:bipt1}
    \end{align}
    If, after Tschirnhausen transformation, $\tilde b_2(x_0) \ne 0$, then $(\tilde \bb,x_0,\de_1,C(d)A)$ is $C^{d-1,1}$-admissible.
    The analogous statements hold for $\tilde \cc$.
\end{proposition}

\begin{proof}
    Consider the continuous bounded (cf.\ \Cref{lem:dominant} and \Cref{lem:Aconsequence}) curve 
    \[
        \ul \aa := (0,-1,\ul a_3,\ldots,\ul a_d) : \II(x_0,A) \to \on{Hyp}_T^0(d) \subseteq  \R^d,
    \]
    where $\ul a_j := |\tilde a_2|^{-j/2} \tilde a_j$.
    Then, by \eqref{eq:A5pt}, there exists $C_1 =C_1(d)$ such that
    \begin{equation} \label{eq:length}
        \|\ul \aa'(x)\|_2 \le C_1 A\, |\tilde a_2(x_0)|^{-1/2}, \quad x \in \II(x_0,A).
    \end{equation}
    We may assume that $C_1 >1$.
    Let $0<r<1$ be the radius of the splitting (see \Cref{def:univsplit}) and 
    define 
    \[
        \de_1 := \frac{|\tilde a_2(x_0)|^{1/2}\,r}{C_1A}.
    \]
    Then $I(x_0,\de_1) \subseteq  \II(x_0,A)$ and  $\ul \aa(I(x_0,\de_1)) \subseteq B(\ul \aa(x_0),r)$, by \eqref{eq:length}.
    Consequently, we have a splitting
    \[
        P_{\tilde \aa} = P_{\bb} P_{\cc}, \quad \text{ on } I(x_0,\de_1), 
    \]
    by \eqref{eq:radius}.

    Next we check \eqref{eq:bipt2} and \eqref{eq:bipt1}. By \eqref{eq:formulasbhyp},
    \begin{equation} \label{eq:bi}
        b_i = |\tilde a_2|^{i/2} \cdot \ps_i\o \ul \aa. 
    \end{equation}
    We claim that, for $1\le s \le d-1$ and $x \in \ol I(x_0,\de_1)$,
    \begin{equation} \label{eq:psaa}
        |(\ps_i\o \ul \aa)^{(s)}(x)| \le C(d)\, A^s \, |\tilde a_2(x_0)|^{-s/2}.
    \end{equation}
    We have 
    \begin{align*}
        (\ps_i\o \ul \aa)' &= \sum_{j=1}^d ((\p_j \ps_i) \o \ul \aa)\cdot \ul a_j',
        \\
        (\ps_i\o \ul \aa)^{(s)} &= \sum_{j=1}^d \p^{s-1}\big(((\p_j \ps_i) \o \ul \aa)\cdot \ul a_j'\big) 
        \\
                                &= \sum_{j=1}^d \sum_{k=0}^{s-1} \binom{s-1}{k} ((\p_j \ps_i) \o \ul \aa)^{(k)} \ul a_j^{(s-k)}.
    \end{align*}
    For $s=1$ the claim \eqref{eq:psaa} follows from \eqref{eq:A5pt}. For $s \ge 2$ the claim follows by induction and \eqref{eq:A5pt}.

    Now \eqref{eq:bipt1} is a consequence of the Leibniz formula, \eqref{eq:derpower}, \eqref{eq:bi}, and \eqref{eq:psaa}.
    To see \eqref{eq:bipt2} we proceed analogously, combining \eqref{eq:A6pt} with the observations at the end of the proof of \Cref{lem:Aconsequence}.

    Suppose that $\tilde b_2(x_0) \ne 0$ and let us show that $(\tilde \bb,x_0,\de_1,C(d)A)$, 
    for a suitable constant $C(d)>1$, is $C^{d-1,1}$-admissible.
    Set 
    \[
        B:= \frac{2C_1 A}{r}.
    \]
    By \Cref{lem:b2a2hyp}, we have 
    \begin{equation} \label{eq:b2a2}
        |\tilde b_2(x_0)| \le 4 \, |\tilde a_2(x_0)|
    \end{equation}
    which implies
    \[
        B^{-1}|\tilde b_2(x_0)|^{1/2} \le  \frac{|\tilde a_2(x_0)|^{1/2} \,r}{C_1A} = \de_1,
    \]
    so that $\mathbf{J}(x_0,B) := I(x_0,B^{-1}|\tilde b_2(x_0)|^{1/2}) \subseteq I(x_0,\de_1)$.
    From \eqref{eq:bipt2} and \eqref{eq:bipt1}, we easily get 
    the same bounds for $\tilde b_i$ instead of $b_i$ (by means of \eqref{eq:Tschirn}).
    By \eqref{eq:b2a2},
    we may replace $\tilde a_2(x_0)$ by $\tilde b_2(x_0)$ on the right-hand side of these estimates if $k\ge i$ (note that $d>i$).
    Now it suffices to invoke \Cref{lem:enough}.

    The same arguments yield the analogous statement about $\tilde \cc$.
\end{proof}

\begin{proposition} \label[p]{prop:indpt2}
Let $(\tilde \aa,x_0,\de,A)$ be $C^{d-1,1}$-admissible and assume that $x_0$ is not reclusive for $\tilde \aa$.
If $\la \in C^0(I(x_0,\de))$ is a root of $P_{\tilde \aa}$ and $\la'(x_0)$ exists, 
    then 
    \begin{equation} \label{eq:lapr}
        |\la'(x_0)| \le C(d) A.
    \end{equation}
\end{proposition}

\begin{proof}
    By assumption, $\tilde a_2(x_0) \ne 0$ and hence $d\ge 2$.
    By \Cref{prop:indpt}, there exists $\de_1>0$ such that there is a splitting 
    $P_{\tilde \aa} = P_{\bb}P_{\cc}$ on $I(x_0,\de_1)$.
    We may assume that $\la$ is a root of $P_{\bb}$ and hence 
    \begin{equation} \label{eq:rootTschirn}
        \la(x) = -\frac{b_1(x)}{\deg P_{\bb}} + \mu(x), \quad x \in I(x_0,\de_1),
    \end{equation}
where $\mu$ is a continuous root of $P_{\tilde \bb}$ and $\mu'(x_0)$ exists 
    (since we assumed that $\la'(x_0)$ exists). 
    By \eqref{eq:bipt1} for $i=k=1$, we have 
    \begin{equation} \label{eq:b1}
        |b_1'(x_0)| \le C(d) A.
    \end{equation}
    By \Cref{lem:splitrec}, $x_0$ is not reclusive for $\tilde \bb$, since $x_0$ is not reclusive for $\tilde \aa$. 
    
    Let us now prove the proposition by induction on $d$. 
        
    If $d=2$, then $\deg P_{\bb} =1$ and $\tilde \bb \equiv 0$. Thus $\la(x)=-b_1(x)$ for $x \in I(x_0,\de_1)$ so that \eqref{eq:b1} gives \eqref{eq:lapr}. 

    Assume that $d\ge 3$.
    If $\tilde b_2(x_0) \ne 0$, then $(\tilde \bb,x_0,\de_1,C(d)A)$ is $C^{d-1,1}$-admissible, by \Cref{prop:indpt}.
    By the induction hypothesis, 
    \[
        |\mu'(x_0)| \le C(d) A.
    \]
    Thus \eqref{eq:lapr} follows from \eqref{eq:rootTschirn} and \eqref{eq:b1}.

    If $\tilde b_2(x_0) = 0$, then $x_0$ (being not reclusive for $\tilde \bb$) is an accumulation point of $Z_{\tilde \bb}$.
    Consequently, 
    $\mu'(x_0)=0$
    and \eqref{eq:lapr} again follows from \eqref{eq:rootTschirn} and \eqref{eq:b1}.
\end{proof}

\begin{proof}[Proof of \Cref{thm:ptwkey}]
    Let $x_0 \in \R$ and $\de>0$.
Let $\tilde \aa \in C^{d-1,1}(\ol I(x_0,\de),\on{Hyp}_T(d))$.
Assume that $x_0$ is not reclusive for $\tilde \aa$.
Let $\la \in C^0(I(x_0,\de))$ be a continuous root of $P_{\tilde \aa}$ and assume that $\la'(x_0)$ exists.
   
    If $\tilde a_2(x_0) \ne 0$, then, by \Cref{lem:ptwass>adm}, 
    $(\tilde \aa,x_0,\de,C(d)A(\de))$ is $C^{d-1,1}$-admissible, where $A(\de)$ is defined by \eqref{eq:defAde} and $C(d)\ge 1$.
    Then \Cref{prop:indpt2} yields 
    \[
       |\la'(x_0)| \le C(d) A(\de). 
    \]
If $\tilde a_2(x_0) = 0$, then $x_0$ (being not reclusive for $\tilde \aa$) is an accumulation point of $Z_{\tilde \aa}$. 
    Hence $\la'(x_0) =0$ and the assertion is trivially true.
\end{proof}

\section{Proof of the main technical result} \label{sec:proof1}

The goal of this section is the proof of \Cref{thm:mainhyp}.

Let $I \subseteq \R$ be an open interval.
    Let $\aa_n \to \aa$ in $C^d (I, \on{Hyp}(d))$, 
    i.e.,  
    for each relatively compact open interval $I_1 \Subset I$,
    \begin{equation*}
        \|\aa- \aa_n\|_{C^{d}(\ol I_1,\R^d)} \to 0 \quad \text{ as } n \to \infty.
    \end{equation*}
    It follows from \Cref{thm:Bronshtein} that the set 
    $\{\sol(\aa_n) : n \ge 1\}$ is bounded in $C^{0,1}(I,\R^d)$.
    We must show that, for each relatively compact open interval $I_0 \Subset I$ and each $1 \le q < \infty$,
    \begin{equation} \label{eq:W1q}
        \|\sol(\aa) -  \sol(\aa_n)\|_{W^{1,q}(I_0,\R^d)}  \to 0 \quad \text{ as } n\to \infty. 
    \end{equation}

\subsection{Strategy of the proof} \label{ssec:strategy}

The proof of \eqref{eq:W1q} is subdivided into three steps. The first two steps are dedicated to the proof of
\begin{equation}     \label{eq:mainconclder}       
    \|\sol(\aa)' -  \sol(\aa_n)'\|_{L^{q}(I_0,\R^d)}  \to 0 \quad \text{ as } n\to \infty, 
\end{equation}
for $1 \le q < \infty$, 
using the dominated convergence theorem. In the third step, we show that 
\begin{equation} \label{eq:C0conv}
    \|\sol(\aa) -  \sol(\aa_n)\|_{L^{\infty}(I_0,\R^d)}  \to 0 \quad \text{ as } n\to \infty. 
\end{equation}
Then \eqref{eq:mainconclder} and \eqref{eq:C0conv} imply \eqref{eq:W1q}.

\subsection*{Step 1}

We claim that the sequence of derivatives $\sol(\aa_n)'$ is dominated almost everywhere on $I_0$ by a positive constant.

To see this, fix $I_0 \Subset I_1 \Subset I$. 
By the assumption of \Cref{thm:mainhyp},
$\{\aa_n|_{I_1} : n \ge 1\}$ is a bounded subset of $C^{d-1,1}(\ol I_1,\R^d)$. 
By \Cref{thm:Bronshtein}, the derivative of $\sol(\aa_n)$ exists almost everywhere in $I_0$ and 
satisfies 
\[
    \|\sol(\aa_n)'\|_{L^\infty(I_0,\R^d)}\le C\, \sup_{n\ge 1} \max_{1 \le j \le d} \|a_{n,j}\|^{1/j}_{C^{d-1,1}(\ol I_1)}=: B < \infty.
\]
This implies the claim.

\subsection*{Step 2}

The aim of this step is to prove the following result.

\begin{theorem} \label[t]{thm:ptwhyp}
    Let $I \subseteq \R$ be an open interval.
    Let $\aa_n \to \aa$ in $C^d (I, \on{Hyp}(d))$ as $n \to \infty$.
    Then, for almost every $x \in I$, 
    \[
        \sol(\aa_n)'(x) \to \sol(\aa)'(x) \quad \text{ as } n \to \infty.  
    \]
\end{theorem}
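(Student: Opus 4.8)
The plan is to reduce the almost-everywhere pointwise convergence of $(\la^\uparrow \o a_n)'$ to $(\la^\uparrow \o a)'$ to a statement at a single generic point $x_0$, and then apply \Cref{thm:ptwkey} twice: once to get uniform (in $n$) bounds for $(\la^\uparrow \o a_n)'(x_0)$, and once (after passing to a subsequential limit) to identify the limit as $(\la^\uparrow \o a)'(x_0)$. The first thing I would do is pick a full-measure set of \emph{good} points $x_0 \in I$: by \Cref{thm:Bronshtein} each $\la_j^\uparrow \o a$ and each $\la_j^\uparrow \o a_n$ is locally Lipschitz, so by Rademacher's theorem the derivatives $(\la^\uparrow \o a)'(x_0)$ and $(\la^\uparrow \o a_n)'(x_0)$ (all $n$) exist off a null set; by \Cref{lem:countable} (applied after a Tschirnhausen transformation, which only shifts roots by the $C^d$-function $a_1/d$ and hence preserves differentiability and the convergence hypothesis) the reclusive points of $a$ form a countable, hence null, set; and I would also discard the null set where some coordinate difference $\ell_i(\la^\uparrow \o a)$ fails to have the property that $x_0$ is either in its interior-of-zero-set or in the complement of its zero set together with $x_0$ being a density point — in short, I want $x_0$ such that for each $i$, the germ of $\{\ell_i(\la^\uparrow\o a)=0\}$ at $x_0$ is either ``$x_0$ interior to the zero set'' or ``$x_0$ has a punctured neighborhood of nonzeros''; this is automatic at all but countably many points since isolated zeros are countable, but I also want $x_0$ to be a point where the relevant $\tilde a_2$-type functions behave regularly.

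Next, fix such a good $x_0$ and choose $\de>0$ with $\ol I(x_0,\de) \Subset I$. Pass to Tschirnhausen form on $\ol I(x_0,\de)$: write $\tilde a$ and $\tilde a_n$, noting $\tilde a_n \to \tilde a$ in $C^d(\ol I(x_0,\de),\R^d)$ by \eqref{eq:Tschirn} and the continuity of polynomial maps, and that $x_0$ is not reclusive for $\tilde a$ (reclusiveness is unchanged by Tschirnhausen transformation — cf.\ the proof of \Cref{lem:splitrec}). The key point is that $x_0$ is not reclusive \emph{for $\tilde a_n$ for $n$ large}: indeed the quantities appearing in the reclusiveness definition — $\tilde a_2(x_0)$ or, after splitting, the $\tilde b_{j,2}$ — are either bounded away from $0$ near $x_0$ uniformly in $n$, or the relevant root-differences vanish identically on a neighborhood for $a$ and hence (by $C^1$-closeness and the structure of $\on{Hyp}_T$) the corresponding factor of $P_{\tilde a_n}$ has all roots in a tiny tube, forcing $\tilde a_n$ to also have $x_0$ as a non-isolated zero of the relevant function; here one uses that $x_0$ is a density point / accumulation point, not an isolated zero, for $\tilde a$. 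Then \Cref{thm:ptwkey} applied to $\tilde a_n$ on $\ol I(x_0,\de)$ gives $|(\la_j^\uparrow \o \tilde a_n)'(x_0)| \le C(d)\, A_n(\de)$, and because $\tilde a_n \to \tilde a$ in $C^d$, all three quantities $A_1(\de), A_2(\de)$ — which are built from $\tilde a_{n,2}(x_0)$, $|\tilde a_{n,2}'|_{C^{0,1}}$, $|\tilde a_{n,j}^{(d-1)}|_{C^{0,1}}$, $\|\tilde a_{n,2}\|_{L^\infty}$ — converge to the corresponding quantities for $\tilde a$ (the $C^{0,1}$-seminorm of the $(d-1)$st derivative is controlled by the $C^d$-norm). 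So $(\la^\uparrow \o a_n)'(x_0)$ is a bounded sequence in $\R^d$.

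Now extract convergent subsequences: it suffices to show every subsequential limit $v$ of $(\la^\uparrow \o a_n)'(x_0)$ equals $(\la^\uparrow \o a)'(x_0)$. Along such a subsequence, and shrinking $\de$ a little, I would like to say $\la^\uparrow \o a_n \to \la^\uparrow \o a$ in, say, $C^{0,\al}(\ol I(x_0,\de))$ for some $\al<1$ — this follows from the uniform Lipschitz bound (\Cref{thm:Bronshtein}) together with the uniform-on-compacts convergence of the coefficient vectors and the elementary fact that a bounded-in-$C^{0,1}$ sequence converging pointwise (which holds since $\la^\uparrow$ is continuous on $\on{Hyp}(d)$ and $a_n(x)\to a(x)$ for every $x$) converges in $C^{0,\al}$. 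The difficulty — and this is where I expect the real work — is squeezing out the \emph{derivative} convergence at $x_0$ from this: one does not get it for free because, as \Cref{ex:Br-continuity} shows, derivatives can jump. The mechanism must be: by the running of the proof of \Cref{thm:ptwkey}/\Cref{prop:indpt2}, $(\la_j^\uparrow\o\tilde a_n)'(x_0)$ is, up to a universal shift coming from $b_{1,n}'(x_0)$ (which converges because $b_{1,n}$ is an analytic function of the $C^d$-convergent data, hence $b_{1,n}\to b_1$ in $C^1$ near $x_0$ via \Cref{prop:indpt} and \Cref{prop:lefttr}), determined recursively by a splitting $P_{\tilde a_n}=P_{b_n}P_{c_n}$ whose factors converge in $C^1$ near $x_0$ to those of $P_{\tilde a}$; the induction on $d$ then reduces to either the base case ($\la_j = -b_{1,n}$, trivial) or to the sub-case where the relevant reduced quadratic coefficient $\tilde b_{j,2}$ vanishes at $x_0$ as an accumulation point — in which case both $(\la_j^\uparrow \o\tilde a_n)'(x_0)$ and $(\la_j^\uparrow\o\tilde a)'(x_0)$ are forced to be $0$ modulo the convergent $b_1'$-shift, because near $x_0$ the corresponding roots are pinched together along a sequence of points accumulating at $x_0$. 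Assembling: along the subsequence $(\la^\uparrow\o a_n)'(x_0)\to(\la^\uparrow\o a)'(x_0)$, so the full sequence converges, which is \Cref{thm:ptwhyp}. The main obstacle is making the ``non-reclusive for $\tilde a_n$, eventually'' step and the ``$C^1$-convergence of the splitting factors, recursively'' step precise, since $\de_1$ in \Cref{prop:indpt} depends on the admissibility constant $A$ and one must check these constants stay under control uniformly in $n$ — which they do, precisely because $A_n(\de)\to A(\de)$.
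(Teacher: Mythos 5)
Your overall route---reduce to Tschirnhausen form, fix a generic point, apply \Cref{thm:ptwkey} to each $\tilde a_n$, and induct on $d$ via the splitting---is essentially the paper's, but two of your steps fail as written. The claim ``$x_0$ is not reclusive for $\tilde a_n$ for $n$ large'' is false, and the justification you offer (that $C^1$-closeness forces the zero set of $\tilde a_{n,2}$ to accumulate at $x_0$, or pinches roots together) cannot be made rigorous, because zero-set structure is not stable under small $C^d$ perturbations. For instance, take $d=2$, $\tilde a_2 \equiv 0$ (so \emph{no} point of $I$ is reclusive for $\tilde a$), and $\tilde a_{n,2}(x) := -n^{-3}\,g(nx)$ where $g \ge 0$ is smooth with bounded derivatives, $g(0)=0$, and $g>0$ away from $0$; then $\tilde a_n \to \tilde a$ in $C^2$, yet $0$ is an isolated zero of every $\tilde a_{n,2}$, hence reclusive for every $\tilde a_n$. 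The correct repair is not a propagation statement but an exclusion: by \Cref{lem:countable} applied to each $\tilde a_n$ separately, the set $\{x \in I : \exists n, \; x \text{ is reclusive for } \tilde a_n\}$ is a countable union of countable sets, hence null, and one simply discards it before fixing $x_0$.

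Your ``pinching'' argument for the sub-case $\tilde b_2(x_0)=0$ is not justified either: the fact that the roots of $P_{\tilde a}$ (or of the factor $P_{\tilde b}$) coincide along a sequence accumulating at $x_0$ says nothing about the roots of $P_{\tilde a_n}$ at or near $x_0$, and $C^{0,\alpha}$-closeness of the root curves does not transfer derivative information at a point---\Cref{ex:Br-continuity} is precisely a warning against this inference. The actual mechanism is: if $x_0 \in \on{acc}(Z_{\tilde a_2})$, then by \Cref{lem:dominant} also $x_0 \in \on{acc}(Z_{\tilde a_j})$ for all $j$, and then by repeated Rolle $\tilde a_j^{(k)}(x_0)=0$ for all $2 \le j \le d$ and $0 \le k \le d$. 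Hence for $\de$ small the quantities $\|\tilde a_j\|_{C^d(\ol I(x_0,\de))}$ are arbitrarily small, and by $C^d$-convergence so are $\|\tilde a_{n,j}\|_{C^d(\ol I(x_0,\de))}$ and $|\tilde a_{n,2}(x_0)|$ for $n$ large; plugging these into the bound $A(\de)$ of \Cref{thm:ptwkey} applied to $\tilde a_n$ makes $|(\la^\uparrow\o\tilde a_n)'(x_0)|$ small. It is only here that $C^d$ (rather than $C^{d-1,1}$) convergence is truly used. With these two points repaired, the induction on degree that you outline does close up, though the paper organizes the recursion over intervals (via a simultaneous splitting and a notion of $C^d$-admissibility that propagates under splitting) rather than over fixed points, which avoids the subsequence extraction and the accompanying identification step entirely.
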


By Step 1 and 2, the dominated convergence theorem yields that 
\eqref{eq:mainconclder} holds, for each relatively compact open interval $I_0 \Subset I$ and
each $1 \le q <\infty$.  

\subsection*{Step 3}

Now we show \eqref{eq:C0conv}.
Fix $x_0 \in I_0$. We have 
\begin{equation} \label{eq:ptw}
    \sol(\aa_n)(x_0)=\la^\uparrow(\aa_n(x_0))  \to \la^\uparrow(\aa(x_0))=\sol(\aa)(x_0) \quad \text{ as } n \to \infty,
\end{equation}
since the map $\la^\uparrow : \on{Hyp}(d) \to \R^d$ is continuous (cf.\ \cite[Lemma 4.1]{AKLM98}). 
For arbitrary $x \in I_0$, 
\begin{align*}
    \|\sol(\aa)(x) - \sol(\aa_n)(x)\|_2 &= \Big\|\sol(\aa)(x_0) - \sol(\aa_n)(x_0) + \int_{x_0}^x \sol(\aa)'(t) - \sol(\aa_n)'(t) \,dt \Big\|_2  
    \\
                                        &\le \|\sol(\aa)(x_0) - \sol(\aa_n)(x_0)\|_2 + \|\sol(\aa)'-\sol(\aa_n)'\|_{L^1(I_0,\R^d)}.
\end{align*}
Thus, \eqref{eq:mainconclder} and \eqref{eq:ptw} imply \eqref{eq:C0conv}. 

\begin{remark}
    Alternatively, \eqref{eq:C0conv} is a consequence of
    \cite[Corollary 6.5]{Parusinski:2024ab} and \Cref{lem:unordered}; 
    for this argument
    it is actually enough that $\aa_n \to \aa$ in $C^0(\ol I_0,\on{Hyp}(d))$ 
    as $n \to \infty$.
\end{remark}

Therefore, in order to prove \Cref{thm:mainhyp} 
it remains to show \Cref{thm:ptwhyp}. 
The rest of the section is devoted to the proof of \Cref{thm:ptwhyp}.

\subsection{On the zero set of $\tilde \aa$}

Recall that $\on{acc}(Z_{\tilde \aa})$ denotes the set of accumulation points of the zero set $Z_{\tilde \aa}$ 
of $\tilde \aa : I \to \Hyp_T(d)$. By \Cref{lem:dominant}, $Z_{\tilde \aa} = Z_{\tilde a_2}$.

\begin{lemma} \label[l]{lem:zeroset}
    Let $I \subseteq \R$ be a bounded open interval.
    Let $\tilde \aa_n \to \tilde \aa$ in $C^d(I, \on{Hyp}_T(d))$ as $n\to \infty$.
    Then, for almost every $x_0 \in Z_{\tilde \aa}$, 
    \begin{equation} \label{eq:to0}
    \sol(\tilde \aa_n)'(x_0) \to 0 \quad \text{ as } n \to \infty.  
    \end{equation}
\end{lemma}

\begin{proof}
    We will show that \eqref{eq:to0} holds for all $x_0 \in J$, where
    \begin{align*}
        J :=  \on{acc}(Z_{\tilde \aa}) &\cap \bigcap_{n\ge 1}\{x \in I : x \text{ is not reclusive for } \tilde \aa_n\} 
        \\
                                       &\cap \bigcap_{n \ge 1} \{x \in I : \sol(\tilde \aa_n)'(x) \text{ exists} \}.
    \end{align*}
    The set $J$ has full measure in $Z_{\tilde \aa}$, by \Cref{lem:countable} and Rademacher's theorem.

    Fix $x_0 \in J \subseteq \on{acc}(Z_{\tilde \aa})$.
    Then $\tilde a_j^{(k)}(x_0) = 0$ for all $2 \le j \le d$ and $0 \le k \le d$, by Rolle's theorem.
    Let $\ep>0$ be fixed.
    By continuity, there exists $\de>0$ such that $I(x_0,\de) \Subset I$ 
    and 
    \begin{equation} \label{eq:conta}
        \|\tilde a_j\|_{C^d(\ol I(x_0,\de))} \le \frac{\ep^j}{2}, \quad 2 \le j \le d.
    \end{equation}
    Since $\tilde \aa_n \to \tilde \aa$ in $C^d(I, \on{Hyp}_T(d))$ as $n\to \infty$, there exists $n_0 \ge 1$ such that, for $n \ge n_0$, 
    \begin{equation} \label{eq:ana}
        \|\tilde a_j - \tilde a_{n,j}\|_{C^d(\ol I(x_0,\de))} \le \frac{\ep^j}{2}, \quad 2 \le j \le d,
    \end{equation}
    and 
    \begin{equation} \label{eq:an2x0}
        |\tilde a_{n,2}(x_0)| \le \de^2 \ep^2. 
    \end{equation}
    By \eqref{eq:conta} and \eqref{eq:ana}, for $n \ge n_0$ and $2 \le j \le d$,
    \begin{equation} \label{eq:contan}
        \|\tilde a_{n,j}\|_{C^d(\ol I(x_0,\de))} 
        \le \|\tilde a_j\|_{C^d(\ol I(x_0,\de))}+\|\tilde a_j - \tilde a_{n,j}\|_{C^d(\ol I(x_0,\de))} 
        \le \ep^j.
    \end{equation}

    Since $x_0 \in J$ is not reclusive for $\tilde \aa_n$ and $\sol(\tilde \aa_n)'(x_0)$ exists, 
    we may apply \Cref{thm:ptwkey} to $\tilde \aa_n$ and conclude
    \[
        \|\sol(\tilde \aa_n)'(x_0)\|_2 \le C(d) \, A(\de),
    \]
    where $A(\de)$ is defined in \eqref{eq:defAde} with $\tilde \aa$ replaced by $\tilde \aa_n$. 
    By \eqref{eq:an2x0} and \eqref{eq:contan}, 
    \[
        A(\de) \le 6\, \ep.
    \]
    Since $\ep>0$ was arbitrary, we conclude that 
    \[
        \sol(\tilde \aa_n)'(x_0) \to 0 \quad \text{ as } n \to \infty.  
    \]
    The proof is complete.
\end{proof}

\subsection{Admissible data}

At points $x_0$ with $\tilde a_2(x_0)\ne 0$ we have a splitting of $P_{\tilde \aa}$ and we 
may use induction on the degree. The following definition is a preparation for the 
induction argument.

Let us recall (from \eqref{eq:II}) that 
\[
        \II(x_0,A) := I(x_0,A^{-1}|\tilde a_2(x_0)|^{1/2}).
\]

\begin{definition}[$C^d$-admissible data] \label[d]{def:adm}
    Let $I_1 \subseteq \R$ be an open bounded interval and $I_0 \Subset I_1$ a relatively compact open subinterval.
    Let $\tilde \aa \in C^{d}(\ol I_1,\on{Hyp}_T(d))$.
    Let $A>0$ be a constant. 
    We say that $(\tilde \aa,I_0,I_1,A)$ is \emph{$C^d$-admissible} if, for 
    every $x_0 \in I_0 \setminus \{x : \tilde a_2(x) = 0\}$, the following holds:
    \begin{enumerate}
        \item $\II(x_0,A)  \subseteq I_1$. \label{eq:A1}
        \item For all $x \in \II(x_0,A)$,
            \begin{equation}
                \frac{1}{2} \le \frac{\tilde a_2(x)}{\tilde a_2(x_0)} \le 2.  \label{eq:A2}
            \end{equation}
        \item For all $2\le j \le d$, $1 \le k \le d$, and $x \in \II(x_0,A)$,
            \begin{equation}
                |\tilde a_j^{(k)}(x)| \le  A^k \, |\tilde a_2(x_0)|^{(j-k)/2}. \label{eq:A3}
            \end{equation}
    \end{enumerate}
\end{definition}

Note that if we take $I_1:= I(x_0,\de)$, let $I_0$ shrink to the point $x_0$, assume $\tilde a_2(x_0)\ne 0$, and 
use $C^{d-1,1}$- instead of $C^d$-regularity, we 
recover the notion from \Cref{def:ptadm}.

\begin{lemma} \label[l]{lem:ass>adm}
    Let $I_1 \subseteq \R$ be a bounded open interval
    and $I_0 \Subset I_1$ a relatively compact open subinterval.
    Let $\tilde \aa_n \to \tilde \aa$ in $C^d (\ol I_1, \on{Hyp}_T(d))$ as $n\to \infty$. 
    Set
    \begin{align}
        A&:= 6 \max\{A_1,A_2\}, \label{eq:defA} 
        \intertext{where, using $\tilde a_{0,j} = \tilde a_j$ for convenience
        and $\de := \on{dist}(I_0, \R \setminus I_1)$,} 
        A_1 &:=\sup_{n\ge 0} \max \big\{ \de^{-1}\|\tilde a_{n,2}\|_{L^\infty(I_1)}^{1/2}, |\tilde a_{n,2}'|_{C^{0,1}(\ol I_1)}^{1/2}\big\}, \nonumber
        \\
        A_2 &:=\sup_{n \ge 0} \max_{2 \le j \le d} \big\{ \|\tilde a_{n,j}^{(d)}\|_{L^\infty(I_1)} \cdot \|\tilde a_{n,2}\|_{L^\infty(I_1)}^{(d-j)/2}  \big\}^{1/d}. \nonumber
    \end{align}
    Then $(\tilde \aa, I_0,I_1,C(d)A)$ and $(\tilde \aa_n, I_0,I_1,C(d)A)$, for $n \ge 1$, are $C^d$-admissible,
    for some constant $C(d)\ge 1$.
\end{lemma}

\begin{proof}
    Fix $n\ge 0$ and $x_0 \in I_0 \setminus \{x : \tilde a_{n,2}(x) = 0\}$. By the definition of $A_1$, we have $\II(x_0,A_1) \subseteq I_1$.
    By \Cref{lem:Glaeser},
    \[
        |\tilde a_{n,2}'(x_0)| \le 2 A_1\, |\tilde a_{n,2}(x_0)|^{1/2}
    \]
    which entails (as in the proof of \Cref{lem:enough}) that \eqref{eq:A2} holds on $\II(x_0,6A_1)$. 
    By the definition of $A_2$, \eqref{eq:A3} holds for $k=d$. 
    By \Cref{lem:dominant} and \eqref{eq:A2}, we have 
     $|\tilde a_j(x)| \le 
     2^j\,|\tilde a_2(x_0)|^{j/2}$, for $x \in \II(x_0,A)$. Thus, \eqref{eq:A3} follows from
    \Cref{lem:interpol}. 
\end{proof}

In the following, we will use $\II(x_0,A)$ as well as its counterpart for $\aa_n$ instead of $\aa$, 
that is
\begin{equation} \label{eq:IIn}
    \II_n(x_0,A) := I(x_0,A^{-1}|\tilde a_{n,2}(x_0)|^{1/2}).
\end{equation}

\subsection{Towards a simultaneous splitting}

Our next goal is to show that, if $\tilde \aa_n \to \tilde \aa$ in $C^d (\ol I_1, \on{Hyp}_T(d))$ as $n \to \infty$
and
$(\tilde \aa, I_0,I_1,A)$ and $(\tilde \aa_n, I_0,I_1,A)$, for $n \ge 1$, are $C^d$-admissible, then 
$P_{\tilde \aa}$ and $P_{\tilde \aa_n}$, for $n$ large enough, 
admit a simultaneous splitting; see \Cref{def:simsplit}.

    Let $I_1 \subseteq \R$ be a bounded open interval
    and $I_0 \Subset I_1$ a relatively compact open subinterval.
    Let $\tilde \aa_n \to \tilde \aa$ in $C^d (\ol I_1, \on{Hyp}_T(d))$, i.e.,
    \begin{equation} \label{eq:Cdconvergence}
        \|\tilde \aa-\tilde \aa_n\|_{C^{d}(\ol I_1,\R^d)} \to 0 \quad \text{ as } n \to \infty.
    \end{equation}
Assume that 
$(\tilde \aa, I_0,I_1,A)$ and $(\tilde \aa_n, I_0,I_1,A)$, for $n \ge 1$, are $C^d$-admissible
for some $A>0$.

Fix $x_0 \in I_0 \setminus \{x : \tilde a_2(x)=0\}$.
By \eqref{eq:Cdconvergence}, 
there is $n_0 \ge 1$ such that 
\begin{equation*} 
    ||\tilde a_{2}(x_0)|^{1/2}-|\tilde a_{n,2}(x_0)|^{1/2}| < \frac{1}{2}\, |\tilde a_{2}(x_0)|^{1/2},
    \quad n \ge n_0,
\end{equation*}
and hence
\begin{equation}\label{eq:2close}
    \frac{1}2 < \frac{|\tilde a_{n,2}(x_0)|^{1/2}}{|\tilde a_{2}(x_0)|^{1/2}}<\frac{3}{2}, 
    \quad n \ge n_0.
\end{equation}
So, for $n \ge n_0$,
\begin{equation} \label{eq:Iu}
    \II(x_0,2A) \subseteq \II_n(x_0,A) \subseteq \II(x_0,2A/3),
\end{equation}
where $\II_n(x_0,A)$ is defined in \eqref{eq:IIn}.
Since $(\tilde \aa_n, I_0,I_1,A)$, for $n \ge 1$, is $C^d$-admissible
and thanks to \eqref{eq:Iu} we see that, for $n \ge n_0$,
\begin{align}
        &\II(x_0,2A) \subseteq I_1, \label{eq:uA1}  
        \\
        &\frac{1}{2} \le \frac{\tilde a_{n,2}(x)}{\tilde a_{n,2}(x_0)} \le 2,\quad x \in \II(x_0,2A),  \label{eq:uA2}
        \\
        &|\tilde a_{n,j}^{(k)}(x)| \le  A^k \, |\tilde a_{n,2}(x_0)|^{(j-k)/2},\quad 2\le j \le d,\, 1 \le k \le d,\,   x \in \II(x_0,2A). \label{eq:uA3}
    \end{align}

    Consider the $C^{d}$ curves 
    \begin{align*}
        \ul \aa &:= (0,-1,\ul a_3,\ldots,\ul a_d) : \II(x_0,2A) \to \on{Hyp}_T^0(d) \subseteq \R^d,
        \\
        \ul \aa_n &:= (0,-1,\ul a_{n,3},\ldots,\ul a_{n,d}) : \II(x_0,2A) \to \on{Hyp}_T^0(d) \subseteq \R^d, \quad n\ge n_0,
    \end{align*}
    where $\ul a_j := |\tilde a_2|^{-j/2} \tilde a_j$ and $\ul a_{n,j} := |\tilde a_{n,2}|^{-j/2} \tilde a_{n,j}$.
    Then, by the proof of \Cref{lem:Aconsequence}, we conclude that
    there is a constant
    \begin{equation} \label{eq:C1}
        C_1 = C_1(d) >1
    \end{equation}
    such that, 
    for $x \in \II(x_0,2A)$, 
    \begin{align*}
        \|\ul \aa'(x)\|_2 \le C_1 A \, |\tilde a_2(x_0)|^{-1/2} \quad \text{ and } \quad  \|\ul \aa_n'(x)\|_2 \le C_1 A\, |\tilde a_{n,2}(x_0)|^{-1/2}.
    \end{align*}
    Let $0<r<1$ be the radius of the splitting (see \Cref{def:univsplit}) and 
    define 
    \[
        J_1 := \II(x_0,4C_1A/r)= I(x_0,\tfrac{r}{4C_1 A}|\tilde a_2(x_0)|^{1/2}).
    \]
    Then $\ul \aa(J_1) \subseteq B(\ul \aa(x_0),r/4)$ and $\ul \aa_n(J_1) \subseteq B(\ul \aa_n(x_0),r/2)$, using \eqref{eq:2close}. 
    By \eqref{eq:Cdconvergence},
    there is $n_1 \ge n_0$ such that
    \begin{equation}
        \|\ul \aa(x_0) - \ul \aa_n(x_0)\|_2 < \frac{r}{4}, \quad n \ge n_1.
    \end{equation}
    Consequently, $B(\ul \aa_n(x_0),r/2)$ is contained in $B(\ul \aa(x_0),3r/4)$, for $n \ge n_1$. 

    In view of \eqref{eq:radius} and \Cref{def:univsplit},
we have splittings on $J_1$,
\begin{equation} \label{eq:simsplit}
    P_{\tilde \aa} = P_{\bb} P_{\cc} \quad \text{ and } \quad  P_{\tilde \aa_n} = P_{\bb_n} P_{\cc_n}, \quad n\ge n_1,
\end{equation}
with the following properties:
\begin{enumerate}
    \item $d_{\bb} := \deg P_{\bb} = \deg P_{\bb_n}$, for all $n\ge n_1$, and $d_{\bb} < d$.
    \item There exist bounded analytic functions $\ps_1,\ldots,\ps_{d_{\bb}}$ with bounded partial derivatives of all orders
        such that, for $x \in J_1$ and $1 \le i \le d_{\bb}$,
    \begin{align*}
        b_i(x) &= |\tilde a_2(x)|^{i/2} \, \ps_i(\ul \aa(x) ), 
        \\
        b_{n,i}(x) &= |\tilde a_{n,2}(x)|^{i/2}\, \ps_i(\ul \aa_n(x) ), \quad n\ge n_1.
    \end{align*}
\end{enumerate}
The same is true for the second factors $P_{\cc}$ and $P_{\cc_n}$.
    
\begin{definition}[Simultaneous splitting] \label[d]{def:simsplit}
    We say that the family $\{P_{\tilde \aa}\} \cup \{P_{\tilde \aa_n}\}_{n\ge n_1}$ has a 
    \emph{simultaneous splitting on an interval $J_1$} if \eqref{eq:simsplit}
    and the above properties (1) and (2) are satisfied.
    \end{definition}

    Note that, applying the Tschirnhausen transformation to $P_{\bb}$ and $P_{\bb_n}$ and by \eqref{eq:Tschirn},
    we find bounded analytic functions $\tilde \ps_1,\ldots,\tilde \ps_{d_{\bb}}$ with bounded partial derivatives of all orders
    such that, for $x \in J_1$ and $1 \le i \le d_{\bb}$,
    \begin{align*}
        \tilde b_i(x) &= |\tilde a_2(x)|^{i/2} \, \tilde \ps_i(\ul \aa(x) ), 
        \\
        \tilde b_{n,i}(x) &= |\tilde a_{n,2}(x)|^{i/2}\, \tilde  \ps_i(\ul \aa_n(x) ), \quad n\ge n_1.
    \end{align*}

    \begin{lemma} \label[l]{lem:bconv} 
        We have $\bb_n \to \bb$ and $\tilde \bb_n \to \tilde \bb$ in $C^{d}(\ol J_1,\R^{d_{\bb}})$ as $n \to \infty$. 
    \end{lemma}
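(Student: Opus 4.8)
The plan is to read $b_{n,i}$, $b_i$ (and $\tilde b_{n,i}$, $\tilde b_i$) off the formulas displayed around \Cref{def:simsplit} and to obtain the $C^d(\ol J_1)$-convergence by iterating the composition estimate \Cref{prop:lefttr}. Recall that $P_{\tilde a}$ and $P_{\tilde a_n}$ are hyperbolic in Tschirnhausen form, so $\tilde a_2\le 0$ and $\tilde a_{n,2}\le 0$ on $\ol I_1$; hence $|\tilde a_2| = -\tilde a_2$ and $|\tilde a_{n,2}| = -\tilde a_{n,2}$ are genuine $C^d$ functions and $|\tilde a_{n,2}|\to |\tilde a_2|$ in $C^d(\ol I_1)$. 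The first point is a uniform positivity statement on $\ol J_1$: since $\ol J_1\Subset I_{A,\tilde a}(x_0)$, the admissibility bound \eqref{eq:A2} gives $\tfrac12|\tilde a_2(x_0)|\le|\tilde a_2(x)|\le 2|\tilde a_2(x_0)|$ for $x\in\ol J_1$, and combining \eqref{eq:uA2} with \eqref{eq:2close} gives $\tfrac18|\tilde a_2(x_0)| < |\tilde a_{n,2}(x)| < 8|\tilde a_2(x_0)|$ for $x\in\ol J_1$ and $n\ge n_0$. So there is a \emph{fixed} open bounded convex interval $V_0\Subset(-\infty,0)$ with $\tilde a_2(\ol J_1)\subseteq V_0$ and $\tilde a_{n,2}(\ol J_1)\subseteq V_0$ for all $n\ge n_0$.

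Next I would upgrade $\tilde a_n\to\tilde a$ to the normalized curves $\ul a_n\to\ul a$. For each of the finitely many exponents $s\in\{\tfrac12\}\cup\{-\tfrac j2:2\le j\le d\}\cup\{\tfrac i2:1\le i\le d_b\}$, the map $t\mapsto(-t)^s$ is real-analytic on $\ol V_0$, hence lies in $C^{d+1}(\ol V_0,\R)$; \Cref{prop:lefttr} applied with $U=J_1$, $V=V_0$ and the curves $\tilde a_2|_{\ol J_1}$, $\tilde a_{n,2}|_{\ol J_1}$ (which converge in $C^d(\ol J_1)$, being restrictions of $C^d(\ol I_1)$-convergent sequences) then gives $|\tilde a_{n,2}|^{s}\to|\tilde a_2|^{s}$ in $C^d(\ol J_1)$. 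Since $C^d$ of a bounded interval is a Banach algebra (for an equivalent norm) and the sequences involved are bounded, multiplying by $\tilde a_{n,j}\to\tilde a_j$ yields $\ul a_{n,j}:=|\tilde a_{n,2}|^{-j/2}\tilde a_{n,j}\to|\tilde a_2|^{-j/2}\tilde a_j=\ul a_j$, hence $\ul a_n\to\ul a$ in $C^d(\ol J_1,\R^d)$. By the construction of the simultaneous splitting, $\ul a(J_1)\subseteq B(\ul a(x_0),r/4)$ and $\ul a_n(J_1)\subseteq B(\ul a(x_0),3r/4)$ for $n\ge n_1$, so all these curves stay in a fixed open bounded convex set $V_1$ whose closure is contained in the ball on which the $\ps_i$ and $\tilde\ps_i$ are defined and (after the slight shrinking in \Cref{def:univsplit}) smooth with bounded derivatives.

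Finally, I would apply \Cref{prop:lefttr} once more with $U=J_1$, $V=V_1$ and $\ps=\ps_i\in C^{d+1}(\ol V_1,\R)$ to get $\ps_i\o\ul a_n\to\ps_i\o\ul a$ in $C^d(\ol J_1)$, and likewise $\tilde\ps_i\o\ul a_n\to\tilde\ps_i\o\ul a$. Multiplying by $|\tilde a_{n,2}|^{i/2}\to|\tilde a_2|^{i/2}$ from the previous step and using the Banach algebra property once more gives
\[
b_{n,i}=|\tilde a_{n,2}|^{i/2}\,(\ps_i\o\ul a_n)\longrightarrow|\tilde a_2|^{i/2}\,(\ps_i\o\ul a)=b_i,\qquad \tilde b_{n,i}\longrightarrow\tilde b_i
\]
in $C^d(\ol J_1)$ for each $1\le i\le d_b$, which is the claim.

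The only genuine work is the uniform positivity step and the bookkeeping in the two composition steps: one must check that every composition is performed with a single fixed function on a single fixed open bounded convex domain, uniformly in $n$. All the inclusions needed for this are exactly those packaged into $C^d$-admissibility, \eqref{eq:2close}, and the ball inclusions built into \Cref{def:simsplit}, so it is a matter of arranging constants and possibly enlarging the threshold $n_1$; everything else is mechanical.
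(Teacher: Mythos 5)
Your argument is correct and is essentially the detailed unpacking of the paper's one-line proof: both rely on the $C^d$-admissibility bounds \eqref{eq:A2} and \eqref{eq:uA2} to get uniform non-vanishing of $\tilde a_2, \tilde a_{n,2}$ on $\ol J_1$, and then apply \Cref{prop:lefttr} (the composition estimate) to conclude. Your extra bookkeeping — the fixed convex domains $V_0, V_1$, the Banach algebra multiplication steps, and the reduction of $|\tilde a_{n,2}|^s$ to a composition — is exactly the content the paper leaves implicit, so no comparison is needed beyond noting the approaches coincide.
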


    \begin{proof}
        By \eqref{eq:A2} and \eqref{eq:uA2}, $|\tilde a_2|^{1/2},  |\tilde a_{n,2}|^{1/2} \in C^d(\ol J_1)$
        and
        $\ul \aa, \ul \aa_n \in C^{d}(\ol J_1,\R^d)$, for $n\ge n_0$, and
        the assertion follows from \Cref{prop:lefttr}.
    \end{proof}

Summarizing, we have the following proposition.

\begin{proposition} \label[p]{prop:aftersplit}
    Let $I_1 \subseteq \R$ be a bounded open interval
    and $I_0 \Subset I_1$ a relatively compact open subinterval.
    Let $\tilde \aa_n \to \tilde \aa$ in $C^d (\ol I_1, \on{Hyp}_T(d))$ as $n\to \infty$. 
    Assume that 
    $(\tilde \aa, I_0,I_1,A)$ and $(\tilde \aa_n, I_0,I_1,A)$, for $n \ge 1$, are $C^d$-admissible
    for some $A>0$. Let $x_0 \in I_0 \setminus \{x : \tilde a_2(x)=0\}$.
    Then the following holds:
    \begin{enumerate}
        \item There exist an interval $J_1$ containing $x_0$ and $n_0\ge 1$ such that
            the family $\{P_{\tilde \aa}\} \cup \{P_{\tilde \aa_n}\}_{n\ge n_0}$ has a 
            simultaneous splitting \eqref{eq:simsplit} on $J_1$.
        \item For the factors in the simultaneous splitting  \eqref{eq:simsplit},
            $\bb_n \to \bb$ and $\tilde \bb_n \to \tilde \bb$ in $C^{d}(\ol J_1,\R^{d_{\bb}})$
             as $n \to \infty$. 
        \item There exist a relatively compact open subinterval $J_0 \Subset J_1$ containing $x_0$ and 
            a constant $C=C(d) >1$ such that
            $(\tilde {\bb}, J_0,J_1,CA)$ 
            and $(\tilde {\bb}_n, J_0,J_1,CA)$, for $n \ge n_0$, are $C^d$-admissible. 
    \end{enumerate}
    The properties \thetag{2} and \thetag{3} also hold for $\bb,\bb_n,\tilde \bb, \tilde \bb_n$  
    replaced by $\cc,\cc_n,\tilde \cc, \tilde \cc_n$. 
\end{proposition}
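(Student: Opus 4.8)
The idea is that everything we need has essentially already been assembled in the preceding discussion; the proof of \Cref{prop:aftersplit} is mostly a bookkeeping exercise that collects the intermediate steps into the stated package. First, since $(\tilde a, I_1,I_0,A)$ and $(\tilde a_n, I_1,I_0,A)$ are $C^d$-admissible and \eqref{eq:Cdconvergence} holds, the construction carried out above applies verbatim at the fixed point $x_0 \in I_0 \setminus Z_{\tilde a_2}$: we obtain $n_0 := n_1 \ge 1$, the interval $J_1 := I_{4C_1A/r}(x_0)$, and the splittings \eqref{eq:simsplit} together with the representation formulas in property (2) of \Cref{def:simsplit}. This yields assertion \thetag{1}. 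Assertion \thetag{2} is then exactly \Cref{lem:bconv} (and its analogue for the second factor, which is proved in the same way), so nothing new is needed there.

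For assertion \thetag{3} I would argue as in \Cref{prop:indpt}, but now with a uniform interval rather than a shrinking one. Set $J_0 := I_{8C_1A/r}(x_0)$ (or any fixed fraction of $J_1$ whose closure lies in $J_1$), so that $x_0 \in J_0 \Subset J_1$. The point is to verify that $(\tilde b, J_1, J_0, CA)$ is $C^d$-admissible for a suitable $C=C(d)>1$, i.e.\ that conditions \thetag{1}--\thetag{3} of \Cref{def:adm} hold for $\tilde b$ at every $y_0 \in J_0 \setminus Z_{\tilde b_2}$. The estimates \eqref{eq:bipt1}--\eqref{eq:bipt2}, which come from the representation $b_i = |\tilde a_2|^{i/2}\ps_i(\ul a)$ via \Cref{lem:Aconsequence} (these are $C^d$, not merely $C^{d-1,1}$, estimates here, obtained from the $C^d$-admissibility of $\tilde a$ and \Cref{prop:lefttr}), together with \Cref{lem:b2a2hyp} which gives $|\tilde b_2(y_0)|^{-1} \le 4|\tilde a_2(y_0)|^{-1}$ and $|\tilde a_2(y_0)| \le$ (a constant times) $|\tilde b_2(y_0)|$, let us convert the $\tilde a_2$-scaled bounds for $\tilde b_j$ into $\tilde b_2$-scaled bounds of the form required in \eqref{eq:A3}. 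One then invokes the $C^d$-analogue of \Cref{lem:enough} to upgrade the bounds for $k \ge j$ to all $1\le k\le d$ (using \Cref{lem:interpol} and \Cref{lem:dominant}) and to obtain \eqref{eq:A2} from \Cref{lem:Glaeser}. The same reasoning applies to each $\tilde b_n$; the uniformity of the constant $C=C(d)$ across $n$ is guaranteed because the $C^d$-admissibility constant $A$ is the same for all $n$ and the analytic functions $\ps_i,\tilde\ps_i$ in the universal splitting (\Cref{def:univsplit}) have partial derivatives bounded independently of $n$.

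The statement that \thetag{2} and \thetag{3} also hold for $c,c_n,\tilde c,\tilde c_n$ is immediate by symmetry, since the roles of $P_b$ and $P_c$ in the splitting $P_{\tilde a} = P_b P_c$ are interchangeable and \Cref{lem:b2a2hyp} applies equally to $\tilde c$.

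\textbf{Main obstacle.} The one genuinely delicate point is \thetag{3}: one must be careful that the interval $J_1$ (hence $J_0$) was chosen of size comparable to $|\tilde a_2(x_0)|^{1/2}/A$ rather than shrinking, and then check that for an \emph{arbitrary} $y_0 \in J_0 \setminus Z_{\tilde b_2}$ the scaled interval $I_{CA,\tilde b}(y_0)$ still fits inside $J_1$; this needs \Cref{lem:b2a2hyp} and the control \eqref{eq:uA2} of the oscillation of $\tilde a_{n,2}$, and it is where the constant $C(d)$ must be chosen large enough. Everything else is a direct transcription of the $C^{d-1,1}$ arguments in \Cref{sec:Bronshtein} into the $C^d$ setting, for which \Cref{prop:lefttr} supplies the composition estimates.
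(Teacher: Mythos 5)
Your approach matches the paper's: assertion \thetag{1} is exactly the preceding construction, \thetag{2} is \Cref{lem:bconv}, and for \thetag{3} the paper simply cites \cite[Proposition 3.20]{Parusinski:2023ab} together with the choice $J_0 := I_{8C_1A/r}(x_0)$, which you also make. Your sketch of what that cited proposition does (reading off $C^d$ bounds on $b_i$ and $\tilde b_i$ from the representation $b_i = |\tilde a_2|^{i/2}\ps_i(\ul a)$ via the composition estimate \Cref{prop:lefttr}, converting $\tilde a_2$-scaled bounds into $\tilde b_2$-scaled ones by \Cref{lem:b2a2hyp}, then upgrading to all derivative orders via the $C^d$ analogue of \Cref{lem:enough}) is the right outline, and your remark that the delicate point is that admissibility must now be checked at \emph{every} $y_0 \in J_0\setminus Z_{\tilde b_2}$ rather than at the single center $x_0$ is exactly the place where this proposition differs from \Cref{prop:indpt}.

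One correction: you quote \Cref{lem:b2a2hyp} in the wrong direction. You wrote that it gives $|\tilde b_2(y_0)|^{-1} \le 4|\tilde a_2(y_0)|^{-1}$ (equivalently $|\tilde a_2|\lesssim|\tilde b_2|$); the lemma actually says $|\tilde b_2| \le 4\,|\tilde a_2|$, i.e.\ $|\tilde a_2(y_0)|^{-1} \le 4\,|\tilde b_2(y_0)|^{-1}$. This direction is what you in fact need: to replace $|\tilde a_2(y_0)|^{(j-k)/2}$ by $|\tilde b_2(y_0)|^{(j-k)/2}$ in the derivative estimates you must control $|\tilde a_2|^{-1}$ from above by $|\tilde b_2|^{-1}$, and that is only possible (and only true) for $k\ge j$; the case $k<j$ is then handled by the \Cref{lem:enough} argument, as you already point out. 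With the inequality written the way you stated it the conversion step fails, so do flip it.
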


\begin{proof}
    (1) This was proved above.

    (2) \Cref{lem:bconv}.

    (3) 
    Set $J_0 := \II(x_0,8C_1A/r) = I(x_0, \frac{r}{8C_1A} |\tilde a_2(x_0)|^{1/2})$,  
    where $C_1$ is the constant from \eqref{eq:C1}. Then clearly $J_0 \Subset J_1$.
    
    Fix $x_1 \in J_0 \setminus \{x : \tilde b_2(x) = 0\}$. Then 
    \[
        |x_1-x_0| < \frac{r}{8C_1A} |\tilde a_2(x_0)|^{1/2}.
    \]
    By \Cref{lem:b2a2hyp} and \eqref{eq:A2},
    \[
         |\tilde b_2(x_1)|^{1/2} \le 2 \, |\tilde a_2(x_1)|^{1/2} \le 2\sqrt{2}\,  |\tilde a_2(x_0)|^{1/2}.
    \]
    Setting 
    \[
        B := \frac{16 \sqrt 2\, C_1A}{r}
    \]
    we have
    \[
        B^{-1} \, |\tilde b_2(x_1)|^{1/2} \le \frac{r}{8C_1A}\, |\tilde a_2(x_0)|^{1/2},
    \]
    and hence 
    \[
        \mathbf{J}(x_1,B) := I(x_1,B^{-1} \, |\tilde b_2(x_1)|^{1/2} ) \subseteq I(x_0,\tfrac{r}{4C_1A} |\tilde a_2(x_0)|^{1/2}) = J_1.
    \]
    
    One checks, exactly as in the proof of \Cref{prop:indpt}, that 
            \begin{equation*}
                |\tilde b_i^{(k)}(x)| \le C(d)\, A^k \, |\tilde a_2(x_0)|^{(i-k)/2},
            \end{equation*}
    for all $2 \le i \le d_\bb$, $1 \le k \le d$, and $x \in J_1$.
    If $k\ge i$, we may replace $\tilde a_2(x_0)$ by $\tilde b_2(x_1)$ on the right-hand side. Thus,
    we may conclude that $(\tilde {\bb}, J_0,J_1,CA)$ is $C^d$-admissible, for a suitable constant $C=C(d)>1$, 
    by the proof of \Cref{lem:enough}.

    To see that also $(\tilde {\bb}_n, J_0,J_1,CA)$ is $C^d$-admissible, fix $x_1 \in J_0 \setminus \{x : \tilde b_{n,2}(x) = 0\}$. 
    By \Cref{lem:b2a2hyp}, \eqref{eq:2close}, and \eqref{eq:uA2}, 
    \[
        |\tilde b_{n,2}(x_1)|^{1/2} \le 2 \, |\tilde a_{n,2}(x_1)|^{1/2} \le 2\sqrt{2}\,  |\tilde a_{n,2}(x_0)|^{1/2} \le 3\sqrt{2}\,  |\tilde a_{2}(x_0)|^{1/2}.
    \]
    Hence, using 
    \[
        B := \frac{24 \sqrt 2\, C_1A}{r},
    \]
    we find
    \[
        \mathbf{J}_n(x_1,B) := I(x_1,B^{-1} \, |\tilde b_{n,2}(x_1)|^{1/2} ) \subseteq I(x_0,\tfrac{r}{4C_1A} |\tilde a_2(x_0)|^{1/2}) = J_1.
    \]
    The rest follows in the same way as described above.
\end{proof}

\subsection{The induction argument}

    \begin{proposition} \label[p]{prop:induction}
        Let $I_1 \subseteq \R$ be a bounded open interval
        and $I_0 \Subset I_1$ a relatively compact open subinterval.
        Let $\tilde \aa_n \to \tilde \aa$ in $C^d (\ol I_1, \on{Hyp}_T(d))$ as $n\to \infty$. 
    Assume that 
    $(\tilde \aa, I_0,I_1,A)$ and $(\tilde \aa_n, I_0,I_1,A)$, for $n \ge 1$, are $C^d$-admissible
    for some $A>0$. 
        Then, for almost every $x \in I_0$,
        \begin{equation} \label{eq:ptcind}
            \sol(\tilde \aa_n)'(x) \to \sol(\tilde \aa)'(x)  \quad \text{ as } n\to \infty. 
        \end{equation}
    \end{proposition}

    \begin{proof}
        We proceed by induction on $d$.
        The base case is trivial, since $Z$ is the unique polynomial in Tschirnhausen form of degree $1$.
        Let us assume that $d\ge 2$ and that the statement is true for monic hyperbolic polynomials of degree $\le d-1$.

        If $x \in \on{acc}(Z_{\tilde a})$ and $\sol(\tilde \aa)'(x)$ exists, then $\sol(\tilde \aa)'(x)=0$.
        Thus, by \Cref{lem:zeroset}, it is enough to show that \eqref{eq:ptcind} holds for almost every $x \in I_0 \setminus \{x: \tilde a_2(x)=0\}$. 
        
        Fix $x_0 \in I_0 \setminus \{x : \tilde a_2(x)=0\}$.
        By \Cref{prop:aftersplit}, 
        there exist intervals $J_1 \Supset J_0 \ni x_0$,  $n_0\ge 1$, and $C=C(d)>1$ such that 
        the family $\{P_{\tilde \aa}\} \cup \{P_{\tilde \aa_n}\}_{n\ge n_0}$ has a 
            simultaneous splitting \eqref{eq:simsplit} on $J_1$, 
            $(\tilde \bb, J_0,J_1,CA)$ 
            and $(\tilde \bb_n, J_0,J_1,CA)$, for $n \ge n_0$, are $C^d$-admissible, and
            $\bb_n \to \bb$ and $\tilde \bb_n \to \tilde \bb$ in $C^{d}(\ol J_1,\R^{d_{\bb}})$ as $n \to \infty$.

        We may assume that, for $x \in J_1$, 
        \[
            \mu(x):= (\la^\uparrow_1(\tilde \aa(x)),\la^\uparrow_2(\tilde \aa(x)), \ldots,\la^\uparrow_{d_{\bb}}(\tilde \aa(x)))
        \]
    is the increasingly ordered root vector of $P_{\bb(x)}$ and, for $n \ge n_0$, 
        \[
            \mu_n(x):=  (\la^\uparrow_1(\tilde \aa_n(x)),\la^\uparrow_2(\tilde \aa_n(x)), \ldots,\la^\uparrow_{d_{\bb}}(\tilde \aa_n(x)))
        \]
        is the increasingly ordered root vector of $P_{\bb_n(x)}$; see \Cref{def:univsplit}. 
        Then
        \[
        \mu(x) + \tfrac{1}{d_{\bb}}(b_1(x),\ldots,b_1(x))  \quad \text{ and } \quad \mu_n(x) + \tfrac{1}{d_{\bb}}(b_{n,1}(x),\ldots,b_{n,1}(x))
        \]
        are the corresponding root vectors for $P_{\tilde \bb(x)}$ and $P_{\tilde \bb_n(x)}$, respectively.
        By induction hypothesis and since $b_{n,1}'(x) \to b_1'(x)$ as $n \to \infty$, we have 
        \[
            \mu_n'(x) \to \mu'(x)  \quad \text{ as } n\to \infty,
        \]
        for almost every $x \in J_0$.
        
        Treating the second factors $P_{\cc}$ and $P_{\cc_n}$ analogously, we conclude that
        \eqref{eq:ptcind} holds
        for almost every $x \in J_0$.
        
        The set $I_0 \setminus \{x : \tilde a_2(x)=0\}$ can be covered by the open intervals $J_0$ 
        and this cover admits a countable subcover.
        This ends the proof.
    \end{proof}

\subsection{Proof of \Cref{thm:ptwhyp}}

Let $I \subseteq \R$ be an open interval.
Let $\aa_n \to \aa$ in $C^d (I, \on{Hyp}(d))$ as $n\to \infty$.
The Tschirnhausen transformation effects a shift of $\sol(\aa)$ by $\frac{1}{d}(a_1,\ldots,a_1)$ 
and of $\sol(\aa_n)$ by $\frac{1}{d}(a_{n,1},\ldots,a_{n,1})$.
The new coefficients are polynomials in the old ones, see \eqref{eq:Tschirn}.
Hence we may assume that the polynomials are all in Tschirnhausen form (by \Cref{prop:lefttr}).
Then \Cref{thm:ptwhyp} follows from \Cref{lem:ass>adm} and \Cref{prop:induction}.

This also completes the proof of \Cref{thm:mainhyp}.

\begin{remark}
    We need $C^d$ convergence in \Cref{lem:zeroset}.
    For all other arguments, it would be enough to work in the class $C^{d-1,1}$. 
\end{remark}

\section{Proofs of the main results} \label{sec:proof2}

In this section, we will deduce \Cref{thm:solmap} and \Cref{cor:solmap} from \Cref{thm:mainhyp}.
We also prove \Cref{t:simple}.

\subsection{A multiparameter version}

The following theorem is a multiparameter version of \Cref{thm:mainhyp}.

\begin{theorem} \label[t]{thm:multhyp}
    Let $U \subseteq \R^m$ be open.
    Let $\aa_n \to \aa$ in $C^d(U,\on{Hyp}(d))$,
    i.e.,  
    for each relatively compact open subset $U_1 \Subset U$,
    \begin{equation*} 
        \| \aa- \aa_n\|_{C^{d}(\ol U_1,\R^d)} \to 0 \quad \text{ as } n \to \infty.
    \end{equation*}
    Then $\{\sol(\aa_n) : n \ge 1\}$ is a bounded set in $C^{0,1}(U, \R^d)$
    and, for each relatively compact open subset $U_0 \Subset U$ and each $1 \le q < \infty$,
    \[
        \|\sol(\aa) -  \sol(\aa_n)\|_{W^{1,q}(U_0,\R^d)}  \to 0 \quad \text{ as } n\to \infty. 
    \]
\end{theorem}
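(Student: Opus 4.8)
The plan is to deduce \Cref{thm:multhyp} from the one-parameter statement \Cref{thm:mainhyp} by restricting to line segments parallel to the coordinate axes and applying Fubini's theorem.

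First the boundedness claim. Since $a_n \to a$ in $C^d(U,\on{Hyp}(d))$, for every relatively compact open $U_1 \Subset U$ the set $\{a_n|_{U_1} : n \ge 1\}$ is bounded in $C^{d-1,1}(\ol U_1,\R^d)$, and the multiparameter version of \Cref{thm:Bronshtein} (see \cite{ParusinskiRainerHyp} and \cite[Theorem 3.4]{Parusinski:2023ab}) then provides a bound $\sup_n \|\la^\uparrow \o a_n\|_{C^{0,1}(\ol U_1,\R^d)} < \infty$. Hence $\{\la^\uparrow \o a_n\}$ is bounded in $C^{0,1}(U,\R^d)$; in particular each $\la^\uparrow \o a_n$ lies in $W^{1,\infty}_{\on{loc}}(U,\R^d)$, and on a fixed $U_0 \Subset U$ the weak gradients satisfy $\|\nabla(\la^\uparrow \o a_n)\|_{L^\infty(U_0)} \le B$ for a constant $B$ independent of $n$. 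Since $\|\cdot\|_{W^{1,q}(U_0,\R^d)}$ controls the $L^q(U_0)$-norm of the map together with the $L^q(U_0)$-norm of each first-order partial derivative, it suffices to prove: (a) $\la^\uparrow \o a_n \to \la^\uparrow \o a$ in $L^q(U_0,\R^d)$, and (b) $\p_i(\la^\uparrow \o a_n) \to \p_i(\la^\uparrow \o a)$ in $L^q(U_0,\R^d)$ for $i=1,\dots,m$.

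For (b): fix $i$ and cover $\ol U_0$ by finitely many closed boxes contained in $U$; it is enough to prove the convergence on each such box $R$, and after relabeling coordinates we may assume $i=1$ and write points of $R$ as $(t,y)$ with $t$ ranging over an open interval $I$ and $y$ over a transverse box $V \subseteq \R^{m-1}$ (enlarging $I$ and $V$ slightly so that $\ol R \subseteq I \times V \Subset U$). For each fixed $y$, the restriction $t \mapsto a_n(t,y)$ converges to $t \mapsto a(t,y)$ in $C^d(I,\on{Hyp}(d))$, since the partials of order $\le d$ converge uniformly on compact subsets of $I \times V$; hence \Cref{thm:mainhyp}, specifically \eqref{eq:mainconcl}, gives
\[
    \int_I |\p_1(\la^\uparrow \o a_n)(t,y) - \p_1(\la^\uparrow \o a)(t,y)|^q \, dt \to 0 \quad \text{ as } n \to \infty .
\]
Here we used that $\la^\uparrow \o a_n$ is locally Lipschitz, hence absolutely continuous on almost every line parallel to $e_1$, so its $t$-derivative along such a line agrees a.e.\ with the weak partial derivative $\p_1(\la^\uparrow \o a_n)$ (likewise for $a$). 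The inner integral is bounded by $(2B)^q |I|$ uniformly in $n$ and $y$, so by the dominated convergence theorem in the variable $y \in V$ together with Fubini's theorem,
\[
    \|\p_1(\la^\uparrow \o a_n) - \p_1(\la^\uparrow \o a)\|_{L^q(R,\R^d)}^q = \int_V \Big(\int_I |\cdots|^q \, dt\Big) dy \to 0 .
\]

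For (a): by \cite[Lemma 4.1]{AKLM98} the map $\la^\uparrow : \on{Hyp}(d) \to \R^d$ is continuous, so $\la^\uparrow(a_n(x)) \to \la^\uparrow(a(x))$ for every $x \in U_0$; combined with the uniform Lipschitz bound from the first paragraph, which makes $\{\la^\uparrow \o a_n\}$ equicontinuous on $\ol U_0$, this forces $\la^\uparrow \o a_n \to \la^\uparrow \o a$ uniformly on $\ol U_0$, a fortiori in $L^q(U_0,\R^d)$. Combining (a) and (b) yields $\|\la^\uparrow \o a - \la^\uparrow \o a_n\|_{W^{1,q}(U_0,\R^d)} \to 0$, as claimed. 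The only point requiring care — more bookkeeping than genuine obstacle — is the identification of the weak partial derivatives on $U_0$ with the classical one-variable derivatives along almost every line (the ACL characterization of Lipschitz/Sobolev functions), which is what legitimizes applying \Cref{thm:mainhyp} slice by slice; everything else is a routine Fubini-plus-domination argument.
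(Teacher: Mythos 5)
Your proof is correct and follows essentially the same route as the paper: cover $U_0$ by boxes, apply the one-parameter result (\Cref{thm:mainhyp}) along lines parallel to each coordinate axis, and combine Fubini with the dominated convergence theorem using the uniform Lipschitz bound from Bronshtein's theorem as the dominating function. The only cosmetic difference is in the $L^q$-convergence of the maps themselves: you invoke pointwise convergence plus equicontinuity (an Arzel\`a--Ascoli argument), while the paper reuses the one-dimensional fundamental-theorem-of-calculus estimate from Step~3 of its strategy; both are routine, and your explicit remark on the ACL identification of weak with slicewise classical derivatives is a sensible point the paper leaves implicit.
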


\begin{proof}
    Let us assume that $U_0$ is an open box $U_0 = I_{1} \times \cdots \times I_{m}$
    with sides parallel to the coordinate axes. 
    Set $\la := \sol(\aa)$ and $\la_n := \sol(\aa_n)$. Let $x = (x_1,x')$ and for 
    $x' \in U_0'= I_{2} \times \cdots \times I_{m}$ consider
    \[
        A_n(x') := \int_{I_{1}} \|\p_1 \la(x_1,x') - \p_1 \la_n(x_1,x')\|_2^q\, dx_1. 
    \]
    Then $A_n(x') \to 0$ as $n \to \infty$, by \Cref{thm:mainhyp}. 
    The boundedness of  $\{\la_n : n \ge 1\}$ in $C^{0,1}(U, \R^d)$ is a consequence of Bronshtein's theorem \ref{thm:Bronshtein}.
    It implies
    that $|\p_1 \la - \p_1 \la_n|$ is dominated on $U_0$ by an integrable function.
    By Fubini's theorem,
    \begin{align*}
        \int_{U_0} \|\p_1 \la(x) - \p_1 \la_n(x)\|_2^q\, dx &=  \int_{U_0'} A_n(x') \, d{x'}.
    \end{align*}
    By the dominated convergence theorem,
    we conclude that 
    \[
        \int_{U_0} \|\p_1 \la(x) - \p_1 \la_n(x)\|_2^q\, dx \to 0 \quad \text{ as } n \to \infty.
    \]
    In an analogous way, one sees that $\|\p_j \la - \p_j \la_n\|_{L^q(U_0, \R^d)} \to 0$ as $n \to \infty$, 
    for each $1\le j\le m$. 
    
    We may conclude that $\|\la - \la_n\|_{L^\infty(U_0,\R^d)} \to 0$ as $n\to \infty$ from the fact that this 
    is true component-wise (see Step 3 in \Cref{ssec:strategy}).

    For general $U_0$, we observe that there are finitely many open boxes as before that are relatively compact in $U$ and 
    cover $U_0$. This ends the proof.
\end{proof}

\subsection{Proof of \Cref{thm:solmap}}

It is clear that     
\Cref{thm:multhyp} implies \Cref{thm:solmap} 
because $C^d(U,\on{Hyp}(d))$ is first-countable. 

\subsection{Proof of \Cref{cor:solmap}}

\Cref{cor:solmap} is an immediate consequence of the following corollary of \Cref{thm:multhyp} 
and \Cref{ex:Br-continuity}.

\begin{corollary}
    Let $U \subseteq \R^m$ be open.
    Let $\aa_n \to \aa$ in $C^d(U,\on{Hyp}(d))$ as $n\to  \infty$.
    Then, for each relatively compact open set $U_0 \Subset U$ and each $0 < \al  < 1$,
    \begin{equation*} \label{eq:mainconcl3}
        \|\sol(\aa) -  \sol(\aa_n)\|_{C^{0,\al}(\ol U_0,\R^d)}  \to 0 \quad \text{ as } n\to \infty. 
    \end{equation*}
\end{corollary}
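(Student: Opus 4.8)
The plan is to deduce the $C^{0,\alpha}$-convergence from the $W^{1,q}$-convergence established in \Cref{thm:multhyp} together with the uniform $C^{0,1}$ boundedness. The key observation is that on a bounded open set (which we may take to be a box, or more generally a Lipschitz domain) a sequence that is bounded in $C^{0,1}$ and converges in $W^{1,q}$ for some $q$ with, say, $q > m$, actually converges in $C^{0,\alpha}$ for every $0 < \alpha < 1$. So first I would fix a relatively compact open $U_0 \Subset U$, enlarge it slightly to $U_0 \Subset U_1 \Subset U$, and reduce to the case where $U_0$ is a finite union of open boxes parallel to the axes, exactly as at the end of the proof of \Cref{thm:multhyp}; it suffices to treat a single box since the $C^{0,\alpha}$ seminorm over a finite union is controlled by the seminorms and sup-norms over the pieces (here one uses that on a box, or convex set, the $C^{0,\alpha}$ seminorm of each component is attained along segments).

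Set $\la := \la^\uparrow \o a$ and $\la_n := \la^\uparrow \o a_n$. By \Cref{thm:multhyp}, $\{\la_n : n \ge 1\}$ is bounded in $C^{0,1}(\ol U_0,\R^d)$, say $|\la_n|_{C^{0,1}(\ol U_0)} \le L$ and $|\la|_{C^{0,1}(\ol U_0)} \le L$; hence $|\la - \la_n|_{C^{0,1}(\ol U_0)} \le 2L$. Also $\|\la - \la_n\|_{L^\infty(U_0,\R^d)} \to 0$ and $\|\la - \la_n\|_{W^{1,q}(U_0,\R^d)} \to 0$ as $n \to \infty$, for every $1 \le q < \infty$; fix $q$ once and for all with $q > m$. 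The central step is an interpolation estimate: for any $f \in C^{0,1}(\ol U_0)$ on a box $U_0$ one has, for all $x \ne y$ in $U_0$,
\[
    \frac{|f(x)-f(y)|}{\|x-y\|_2^\al} = \Big(\frac{|f(x)-f(y)|}{\|x-y\|_2}\Big)^\al |f(x)-f(y)|^{1-\al} \le |f|_{C^{0,1}(\ol U_0)}^\al \, (2\|f\|_{L^\infty(U_0)})^{1-\al},
\]
so that $|f|_{C^{0,\al}(\ol U_0)} \le 2^{1-\al}\,|f|_{C^{0,1}(\ol U_0)}^\al \,\|f\|_{L^\infty(U_0)}^{1-\al}$. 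Applying this componentwise to $f = \la_j - \la_{n,j}$ and summing (or taking the max over $j$), we get
\[
    \|\la - \la_n\|_{C^{0,\al}(\ol U_0,\R^d)} \le \|\la - \la_n\|_{L^\infty(U_0,\R^d)} + C(d)\,(2L)^\al\,\|\la - \la_n\|_{L^\infty(U_0,\R^d)}^{1-\al} \to 0
\]
as $n \to \infty$, since $0 < 1-\al$. Notably, this argument does not even need $W^{1,q}$-convergence: the $L^\infty$-convergence from Step 3 of \Cref{ssec:strategy} (recorded in \eqref{eq:C0conv} and reproved in the multiparameter setting) together with the uniform Lipschitz bound already suffices. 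I would therefore present it that way, invoking only the boundedness in $C^{0,1}(\ol U_0,\R^d)$ and the $L^\infty$-convergence, both furnished by \Cref{thm:multhyp}.

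The only mild subtlety — the "main obstacle", though it is minor — is the passage from a single box to a general relatively compact $U_0$: the $C^{0,\alpha}$ seminorm is nonlocal, so covering $U_0$ by finitely many boxes $B_1,\dots,B_N$ does not immediately bound $|f|_{C^{0,\alpha}(\ol U_0)}$ by $\max_i |f|_{C^{0,\alpha}(\ol B_i)}$. One fixes this by choosing the boxes so that $\ol U_0$ is connected and the $B_i$ overlap enough that any two points of $U_0$ can be joined by a chain of boundedly many segments each lying in some $B_i$ (possible since $\ol U_0$ is compact and connected, after shrinking the Lebesgue number); then for $\|x-y\|_2$ bounded below by a fixed constant the quotient $|f(x)-f(y)|/\|x-y\|_2^\alpha$ is controlled by $\|f\|_{L^\infty}$, and for $\|x-y\|_2$ small both points lie in a common $B_i$. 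This is routine and I would dispatch it in one or two sentences, citing the box case already handled and the standard compactness argument.
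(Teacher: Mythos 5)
Your proof is correct, but it takes a genuinely different route from the paper. The paper's proof reduces to the case of a box $U_0$ and applies Morrey's inequality: for $q > m$ and $\al = 1-m/q$, the bound $\|f\|_{C^{0,\al}(\ol U_0,\R^d)} \le C\|f\|_{W^{1,q}(U_0,\R^d)}$ converts the $W^{1,q}$ convergence of \Cref{thm:multhyp} directly into $C^{0,\al}$ convergence; since $q$ may be taken arbitrarily large, every $\al < 1$ is reached. You instead use the elementary pointwise interpolation
\[
    |f|_{C^{0,\al}(\ol U_0)} \le 2^{1-\al}\,|f|_{C^{0,1}(\ol U_0)}^{\al}\,\|f\|_{L^\infty(U_0)}^{1-\al},
\]
combined with the uniform $C^{0,1}$ bound and the $L^\infty$ convergence from \Cref{thm:multhyp}. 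As you observe, this bypasses the Sobolev embedding and, in fact, the $W^{1,q}$ convergence altogether, which makes it a slightly more elementary argument than the one in the paper.

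One remark: the box reduction and the chaining discussion in your final paragraph are unnecessary for your approach and should be dropped. Your interpolation inequality is purely algebraic and holds on an arbitrary open set, and \Cref{thm:multhyp} already asserts that $\{\la^\uparrow \o a_n\}$ is bounded in $C^{0,1}(U,\R^d)$, which by the paper's conventions means a uniform $C^{0,1}(\ol U_0,\R^d)$ bound for every relatively compact open $U_0 \Subset U$, box or not. The worry about passing from a box to a general $U_0$ is genuinely relevant for the paper's Morrey route, where the embedding constant depends on the geometry of $U_0$, but it simply does not arise in your version; invoking it only obscures that your proof is both shorter and domain-independent.
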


\begin{proof}
    Again we may assume that $U_0$ is a box (and hence has Lipschitz boundary).
    Then the assertion follows from \Cref{thm:multhyp} and Morrey's inequality,
    \[
    \|\sol(\aa)-  \sol(\aa_n)\|_{C^{0,\al}(\ol U_0,\R^d)} \le C\, \|\sol(\aa) -  \sol(\aa_n)\|_{W^{1,q}(U_0,\R^d)},
    \]
    where $\al = 1-m/q$, $q> m$, and $C=C(m,q,U_0)$.
\end{proof}

\subsection{Proof of \Cref{t:simple}}

The restriction $\ps:=\la^\uparrow|_{\on{Hyp}^\o(d)} : \on{Hyp}^\o(d) \to \R^d$ is real analytic, by \Cref{lem:splitting}.
Thus \Cref{t:simple} is a consequence of \Cref{prop:lefttr}, observing that $\sol^\o  = \ps_*$ and that $\|\ps\|_{C^{k+1}(\ol V_0,\R^d)}$ 
depends only on $d$, $k$, and $V_0$.

\section{Applications} \label{sec:app}

In this section, we give several applications of our results.
In \Cref{ssec:app1}, we clarify their relation to the continuity results for the solution map of general polynomials 
obtained in \cite{Parusinski:2024ab}. 
In \Cref{ssec:app2}, we deduce that locally the surface area of the graphs of the roots of hyperbolic polynomials 
is continuous and conclude local lower semicontinuity of the area of the zero sets of hyperbolic polynomials.  
In \Cref{ssec:app3}, we prove a theorem on approximation by hyperbolic polynomials with all roots simple.
Finally, we obtain continuity results for the eigenvalues of Hermitian matrices, in \Cref{ssec:Hermitian},
and for singular values, in \Cref{ssec:singval}.

\subsection{Relation to the results for general polynomials} \label{ssec:app1}
 
The case of general complex (not necessarily hyperbolic) polynomials is treated in \cite{Parusinski:2024ab}
which builds on the results of \cite{ParusinskiRainerAC,ParusinskiRainer15}.
The crucial difference is that in general there is no canonical choice of a continuous ordered $d$-tuple of the complex roots. 
Even worse, if the parameter space is at least two-dimensional, then a parameterization of the roots by continuous functions might not exist;
but there exist parameterizations by functions of bounded variation, see \cite{Parusinski:2020aa}.
Therefore the continuity results  in \cite{Parusinski:2024ab} are formulated in terms of the \emph{unordered} $d$-tuple of the roots.

Let us compare the results obtained in this paper with the ones of \cite{Parusinski:2024ab}. 
To this end, we investigate the metric space $\cA_d(\R)$ of unordered $d$-tuples of real numbers.
It is a simple instance of the space $\cA_d(\R^m)$ considered in \cite{Almgren00} and \cite{De-LellisSpadaro11}. 

For $x=(x_1,\ldots,x_d) \in \R^d$, let $[x] = [x_1,\ldots,x_d]$ be the corresponding unordered $d$-tuple, 
i.e., the equivalence class (or orbit) of $x$ with respect to the action 
of the symmetric group $\on{S}_d$ on $\R^d$ by permutation of the coordinates:
\[
    \si x := (x_{\si(1)},x_{\si(2)},\ldots,x_{\si(d)}),\quad \si \in \on{S}_d,~ x \in \R^d. 
\]

The set $\cA_d(\R) := \{[x] : x \in \R^d \}$ with the distance 
\[
    \mathbf d([x],[y]) := \min_{\si \in \on{S}_d}\frac{1}{\sqrt d} \, \|x - \si y\|_2
\]
is a complete metric space. If we identify the elements of $\cA_d(\R)$ with formal sums 
$\frac{1}{d} \sum_{i=1}^d \llbracket x_i \rrbracket$, where $\llbracket x_i \rrbracket$ denotes the Dirac mass of $x_i \in \R$,
then $\mathbf d$ is induced by the $L^2$ based Wasserstein metric on the space of 
probability measures on $\R$.

For $x \in \R^d$, let $x^\uparrow \in \R^d$ be the representative of the equivalence class $[x]$ with increasingly ordered coordinates. 
Clearly, $x^\uparrow$ only depends on $[x]$ and thus we have an injective map $(~)^\uparrow : \cA_d(\R) \to \R^d$.
It is a right-inverse of $[~] : \R^d \to \cA_d(\R)$. 

\begin{lemma} \label[l]{lem:unordered}
   We have 
   \[
       \mathbf d([x],[y]) = \frac{1}{\sqrt d} \, \|x^\uparrow - y^\uparrow \|_2, \quad x,y \in \R^d.
   \]
   In particular, $(~)^\uparrow : \cA_d(\R) \to \R^d$ and $[~] : \R^d \to \cA_d(\R)$ are Lipschitz maps.
\end{lemma}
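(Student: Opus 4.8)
The plan is to reduce the identity to the classical rearrangement inequality, exploiting that the Euclidean norm is $\on{S}_d$-invariant. First I would observe that if $\tau \in \on{S}_d$ is chosen with $\tau x = x^\uparrow$, then for every $\sigma$ one has $\|x - \sigma y\|_2 = \|\tau x - \tau \sigma y\|_2$, and as $\sigma$ ranges over $\on{S}_d$ so does $\tau\sigma$; hence $\min_{\sigma}\|x-\sigma y\|_2 = \min_\sigma \|x^\uparrow - \sigma y\|_2$, and applying the same trick to $y$ we may take $y^\uparrow$ in place of $y$. Thus $\mathbf d([x],[y]) = \min_{\sigma \in \on{S}_d}\tfrac{1}{\sqrt d}\|x^\uparrow - \sigma y^\uparrow\|_2$. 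Expanding $\|x^\uparrow - \sigma y^\uparrow\|_2^2 = \|x^\uparrow\|_2^2 + \|y^\uparrow\|_2^2 - 2\langle x^\uparrow, \sigma y^\uparrow\rangle$ (using again permutation-invariance of the norm on $y^\uparrow$), minimizing the distance is equivalent to maximizing $\langle x^\uparrow, \sigma y^\uparrow\rangle = \sum_{i=1}^d x^\uparrow_i (\sigma y^\uparrow)_i$ over $\sigma$.

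Next I would prove that this maximum is attained at $\sigma = \on{id}$, i.e.\ $\sum_{i} x^\uparrow_i (\sigma y^\uparrow)_i \le \sum_i x^\uparrow_i y^\uparrow_i$ for all $\sigma \in \on{S}_d$. I would record the standard exchange argument: if for a given $\sigma$ there exist $i<j$ with $(\sigma y^\uparrow)_i > (\sigma y^\uparrow)_j$, then swapping the $i$-th and $j$-th entries of the vector $\sigma y^\uparrow$ changes the inner product by $(x^\uparrow_j - x^\uparrow_i)\bigl((\sigma y^\uparrow)_i - (\sigma y^\uparrow)_j\bigr) \ge 0$, since $x^\uparrow$ is nondecreasing. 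Finitely many such transpositions turn $\sigma y^\uparrow$ into the sorted vector $y^\uparrow$ without ever decreasing the inner product, which proves the claim and hence $\mathbf d([x],[y]) = \tfrac{1}{\sqrt d}\|x^\uparrow - y^\uparrow\|_2$.

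For the ``in particular'' part I would argue as follows. The displayed identity shows directly that $\|x^\uparrow - y^\uparrow\|_2 = \sqrt d\,\mathbf d([x],[y])$, so $(~)^\uparrow : \cA_d(\R) \to \R^d$ is Lipschitz with constant $\sqrt d$. Conversely, taking $\sigma = \on{id}$ in the definition of $\mathbf d$ gives $\mathbf d([x],[y]) \le \tfrac{1}{\sqrt d}\|x - y\|_2$, so $[~] : \R^d \to \cA_d(\R)$ is Lipschitz with constant $1/\sqrt d$.

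I do not expect a genuine obstacle: the argument is entirely elementary, and the only point demanding mild care is the bookkeeping in the exchange step — verifying that each transposition weakly increases the inner product and that the procedure terminates at the fully sorted configuration. (Alternatively one could simply cite the rearrangement inequality and skip this verification.)
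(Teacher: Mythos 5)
Your argument is correct and is essentially the same as the paper's: the paper also reduces to showing $\|x^\uparrow - y^\uparrow\|_2 \le \|x^\uparrow - y\|_2$, verifies the $d=2$ case by the equivalent computation $(x_2-x_1)(y_2-y_1)\ge 0$ (your exchange step, just written without inner products), and then invokes the decomposition of a permutation into transpositions, exactly as you do.
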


\begin{proof}
   Evidently,
   \[
       \mathbf d([x],[y]) = \mathbf d([x^\uparrow],[y^\uparrow]) = \min_{\si \in \on{S}_d}\frac{1}{\sqrt d} \, \|x^\uparrow - \si y^\uparrow\|_2 \le \frac{1}{\sqrt d} \,\|x^\uparrow - y^\uparrow\|_2.
   \]
   Thus the assertion will follow from the claim that $\|x^\uparrow - y^\uparrow\|_2 \le \|x^\uparrow-y\|_2$, for all $x,y \in \R^d$.
   For $d=2$, the claim is equivalent to
   \[
       (x_1 - y_1)^2 + (x_2-y_2)^2 \le (x_1 - y_2)^2 + (x_2-y_1)^2
   \]
   whenever $x_1 \le x_2$ and $y_1 \le y_2$.
   By a simple computation, it is further equivalent to the true statement $(x_2-x_1)(y_2-y_1) \ge 0$.
   The general case follows from the fact that any permutation is a finite composite of transpositions.
\end{proof}

By \Cref{lem:unordered}, the map $(~)^\uparrow : \cA_d(\R) \to \R^d$ satisfies the conditions of an Almgren embedding 
(as defined in \cite{Parusinski:2024ab} following \cite{Almgren00} and \cite{De-LellisSpadaro11}). 
Thus \Cref{thm:mainhyp} can be interpreted as a special version of the general theorem \cite[Theorem 1.1]{Parusinski:2024ab}
with the important difference that $\sol(\aa_n) \to \sol(\aa)$ in $W^{1,q}_{\on{loc}}$ as $n \to \infty$, see
\eqref{eq:mainconcl}, holds for each $1 \le q <\infty$,
while in the general result the corresponding fact is valid only for $1 \le q < d/(d-1)$.

For the next theorem, which is a stronger version of \cite[Theorem 1.3]{Parusinski:2024ab} in the hyperbolic case,
we need to recall the notions of metric speed and $q$-energy.

\begin{definition}[Metric speed and $q$-energy]
Let $I \subseteq \R$ be an open interval and $\aa \in C^d(I,\Hyp(d))$.
Consider the Lipschitz curve $\La(x) := [\sol(\aa)(x)]$, for $x \in I$, in the metric space $\cA_d(\R)$.
Then (see \cite{Ambrosio:2008aa}) the limit 
\[
    |\dot \La|(x) := \lim_{h \to 0} \frac{\mathbf d(\La(x+h),\La(x))}{|h|}
\]
exists for almost every $x \in I$ and is called the \emph{metric speed} of $\La$ at $x$. 
The \emph{$q$-energy} of $\La$ on a subinterval $I_0 \subseteq I$ is defined by 
\[
\cE_{q,I_0}(\La) := \int_{I_0} \big(|\dot \La|(x) \big)^q \, dx.
\]
\end{definition}

\begin{theorem} \label[t]{thm:unordered}
    Let $I \subseteq \R$ be an open interval.
    Let $\aa_n \to \aa$ in $C^d (I, \on{Hyp}(d))$ as $n\to \infty$.
    Set $\La := [\sol(\aa)]$ and $\La_n := [\sol(\aa_n)]$.
    Then, for each relatively compact open interval $I_0 \Subset I$, 
    \begin{align} \label{eq:mse1}
        \|\mathbf{d}(\La,\La_n) \|_{L^\infty(I_0)} &\to 0 \quad \text{ as } n \to \infty,
        \\ \label{eq:mse2}
        \big\| |\dot \La| - |\dot \La_n| \big\|_{L^q(I_0)} &\to 0 \quad \text{ as } n \to \infty,
        \\ \label{eq:mse3}
        \big| \cE_{q,I_0}(\La)- \cE_{q,I_0}(\La_n) \big| &\to 0 \quad \text{ as } n \to \infty,
    \end{align}
    for each $1 \le q <\infty$.
\end{theorem}

\begin{proof}
    First, \eqref{eq:mse1} is a consequence of \eqref{eq:C0conv} and \Cref{lem:unordered}.
    By \cite[Lemma 11.1]{Parusinski:2024ab}, 
    \[
        |\dot \La|(x) = \frac{1}{\sqrt d}\, \|\sol(\aa)'(x)\|_2 
        \quad \text{ and } \quad
        |\dot \La_n|(x) = \frac{1}{\sqrt d}\, \|\sol(\aa_n)'(x)\|_2 
    \]
    for almost every $x \in I$. 
    Thus, \eqref{eq:mse2} and \eqref{eq:mse3} follow from \Cref{cor:se}.
\end{proof}

\subsection{Continuity of the area of the solution map} \label{ssec:app2}

Let us first expand \Cref{cor:se}.

\begin{corollary} \label[c]{cor:area1}
   Let $U \subseteq \R^m$ be open.
    Let $\aa_n \to \aa$ in $C^d(U,\on{Hyp}(d))$ as $n\to \infty$. 
    Let $R \in \R[X_1,\ldots,X_{dm}]$ be any real polynomial in the $d \cdot m$ variables $X_1,\ldots,X_{dm}$.
    Set $\la = (\la_1,\ldots,\la_d) := \sol(\aa)$ and 
    $\la_n = (\la_{n,1},\ldots,\la_{n,d}) := \sol(\aa_n)$, for $n \ge 1$.
    Then, for each relatively compact open subset $U_0 \Subset U$ and each $1 \le q<\infty$,
    \begin{equation} \label{eq:R1}
        \Big\|R\Big((\p_i \la_j)_{\substack{1\le i\le m\\ 1\le j \le d}}\Big) - R\Big((\p_i\la_{n,j})_{\substack{1\le i\le m\\ 1\le j \le d}}\Big)\Big\|_{L^q(U_0)} \to 0 \quad \text{ as } n \to \infty,
    \end{equation}
    and consequently,
    \begin{equation} \label{eq:R2}
      \Big\| R\Big((\p_i\la_{n,j})_{\substack{1\le i\le m\\ 1\le j \le d}}\Big)\Big\|_{L^q(U_0)}   \to  \Big\|R\Big((\p_i \la_j)_{\substack{1\le i\le m\\ 1\le j \le d}}\Big) \Big\|_{L^q(U_0)}\quad \text{ as } n \to \infty.
    \end{equation}
\end{corollary}

\begin{proof}
    Clearly, \eqref{eq:R2} is a consequence of \eqref{eq:R1}. 
    
    Let us prove \eqref{eq:R1}.
    It is enough to show the assertion for monomials $R$.
    Let us proceed by induction on the degree $\ell$ of the monomial $R$.
    For $\ell =1$, the assertion follows from \Cref{thm:multhyp} in view of 
    \begin{align*}
        \|\p_i \la_j - \p_i \la_{n,j}\|_{L^q(U_0)} \le         
        \|\p_i \la - \p_i \la_{n}\|_{L^q(U_0,\R^d)}.
    \end{align*}
    If $\ell \ge 2$, then, by H\"older's inequality, 
    \begin{align*}
       \MoveEqLeft \| \p_{i_1} \la_{j_1} \cdots \p_{i_\ell} \la_{j_\ell}-\p_{i_1} \la_{n,j_1} \cdots \p_{i_\ell} \la_{n,j_\ell}\|_{L^q(U_0)}
       \\
        &\le
        \| \p_{i_1} \la_{j_1} \cdots \p_{i_\ell} \la_{j_\ell} -\p_{i_1} \la_{j_1} \cdots \p_{i_{\ell-1}} \la_{j_{\ell-1}} \cdot\p_{i_\ell} \la_{n,j_\ell}\|_{L^q(U_0)}
        \\
        &\quad +
        \| \p_{i_1} \la_{j_1} \cdots \p_{i_{\ell-1}} \la_{j_{\ell-1}} \cdot\p_{i_\ell} \la_{n,j_\ell}   -\p_{i_1} \la_{n,j_1} \cdots \p_{i_\ell} \la_{n,j_\ell}\|_{L^q(U_0)}
       \\
        &\le
        \|\p_{i_1} \la_{j_1} \cdots \p_{i_{\ell-1}} \la_{j_{\ell-1}}\|_{L^\infty(U_0)} \|  \p_{i_\ell} \la_{j_\ell} -\p_{i_\ell} \la_{n,j_\ell}\|_{L^q(U_0)}
        \\
        &\quad +
        \| \p_{i_1} \la_{j_1} \cdots \p_{i_{\ell-1}} \la_{j_{\ell-1}}    -\p_{i_1} \la_{n,j_1} \cdots \p_{i_{\ell-1}} \la_{n,j_{\ell-1}}\|_{L^q(U_0)} \|\p_{i_\ell} \la_{n,j_\ell}\|_{L^\infty(U_0)}
    \end{align*}
    which tends to zero as $n \to 0$, by the induction hypothesis, because
    \[
        \|\p_{i_1} \la_{j_1} \cdots \p_{i_{\ell-1}} \la_{j_{\ell-1}}\|_{L^\infty(U_0)} \le C \quad \text{ and } \quad   \|\p_{i_\ell} \la_{n,j_\ell}\|_{L^\infty(U_0)} \le C
    \]
    for a constant $C>0$ independent of $n$ and $i_k,j_k$,
    by Bronshtein's theorem (see \Cref{thm:Bronshtein}).
\end{proof}

Let $f : U \to \R^d$ 
be a Lipschitz map, 
where $U \subseteq \R^m$ is open. 
We recall that the \emph{Jacobian} $|J f|$ of $f$ is the square root of the sum of the squares of the determinants of the $k \times k$ minors with $k = \min\{m,d\}$ of the Jacobian matrix
\[
    (\p_i f_j)_{\substack{1\le i\le m\\ 1\le j \le d}},
\]
which exists almost everywhere, by Rademacher's theorem.

\begin{corollary} \label[c]{cor:area2}
   Let $U \subseteq \R^m$ be open.
    Let $\aa_n \to \aa$ in $C^d(U,\on{Hyp}(d))$ as $n\to \infty$.
    Then, for each relatively compact open subset $U_0 \Subset U$ and each $1 \le q<\infty$,
    \[
        \big\| |J (\sol(\aa))| - |J (\sol(\aa_n))| \big\|_{L^q(U_0)}  \to 0 \quad \text{ as } n \to \infty,
    \]
    and consequently,
    \[
        \big\|   |J (\sol(\aa_n))| \big\|_{L^q(U_0)}  \to \big\| |J (\sol(\aa))| \big\|_{L^q(U_0)} \quad \text{ as } n \to \infty.
    \]
\end{corollary}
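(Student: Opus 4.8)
The plan is to reduce $|J(\la^\uparrow \o a)|$ to a polynomial expression in the partial derivatives $\p_i \la_j$ and then invoke \Cref{cor:area1}. Recall that $|Jf|$ is, by the Cauchy--Binet formula, the square root of $\sum_J (\det M_J)^2$, where $M_J$ ranges over the $k \times k$ submatrices ($k = \min\{m,d\}$) of the Jacobian matrix $(\p_i f_j)$. Thus $|J(\la^\uparrow \o a)|^2 = R\big((\p_i \la_j)_{i,j}\big)$ for an explicit real polynomial $R$ in the $dm$ variables (namely the sum of squares of the $k \times k$ minors, each of which is a polynomial in the entries). The same $R$ gives $|J(\la^\uparrow \o a_n)|^2 = R\big((\p_i \la_{n,j})_{i,j}\big)$.

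First I would set $S := |J(\la^\uparrow \o a)|^2 = R((\p_i \la_j))$ and $S_n := |J(\la^\uparrow \o a_n)|^2 = R((\p_i\la_{n,j}))$, both defined almost everywhere on $U_0$ by Rademacher's theorem, and observe $|J(\la^\uparrow \o a)| = \sqrt{S}$, $|J(\la^\uparrow \o a_n)| = \sqrt{S_n}$. By \Cref{cor:area1} (applied to the polynomial $R$), $\|S - S_n\|_{L^q(U_0)} \to 0$ as $n \to \infty$, for every $1 \le q < \infty$. Then I would use the elementary pointwise inequality $\big|\sqrt{S(x)} - \sqrt{S_n(x)}\big| \le \sqrt{|S(x) - S_n(x)|}$, valid for all nonnegative reals, to get
\[
    \big\| |J (\la^\uparrow \o a)| - |J (\la^\uparrow \o a_n)| \big\|_{L^q(U_0)}
    = \big\| \sqrt{S} - \sqrt{S_n} \big\|_{L^q(U_0)}
    \le \big\| \sqrt{|S-S_n|} \big\|_{L^q(U_0)}
    = \| S - S_n \|_{L^{q/2}(U_0)}^{1/2}.
\]
If $q \ge 2$ this is directly $\|S-S_n\|_{L^{q/2}(U_0)}^{1/2} \to 0$ by \Cref{cor:area1}. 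For $1 \le q < 2$, since $U_0$ is relatively compact (hence has finite measure), Hölder's inequality gives $\|S-S_n\|_{L^{q/2}(U_0)} \le |U_0|^{c}\,\|S-S_n\|_{L^1(U_0)}$ for a suitable exponent $c \ge 0$, and again the right-hand side tends to $0$. The ``consequently'' statement is then immediate from the reverse triangle inequality $\big| \| |J(\la^\uparrow \o a_n)|\|_{L^q(U_0)} - \| |J(\la^\uparrow \o a)|\|_{L^q(U_0)}\big| \le \big\| |J(\la^\uparrow \o a)| - |J(\la^\uparrow \o a_n)|\big\|_{L^q(U_0)}$.

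The only mild technical point to be careful about is the passage from $L^{q/2}$ to $L^q$ integrability of $S - S_n$ and the use of finite measure of $U_0$; since the Jacobians are bounded on $U_0$ by Bronshtein's theorem (\Cref{thm:Bronshtein}), $S$ and $S_n$ are in fact uniformly bounded in $L^\infty(U_0)$, so all the $L^p$ norms on $U_0$ are comparable and no integrability issue actually arises. I do not anticipate a genuine obstacle here — the content is entirely carried by \Cref{cor:area1}, and the remaining work is the Cauchy--Binet identification of $|Jf|^2$ as a polynomial in the Jacobian entries together with the concavity inequality for the square root.
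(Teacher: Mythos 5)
Your proof is correct and takes essentially the same approach as the paper: both identify $|Jf|^2$ as a sum of squares of $k\times k$ minors (hence a polynomial in the Jacobian entries), apply the pointwise inequality $|\sqrt{s}-\sqrt{t}|\le\sqrt{|s-t|}$, invoke \Cref{cor:area1}, and use H\"older on the finite-measure set $U_0$ to reconcile exponents. The only cosmetic difference is that the paper first passes from $L^q$ to $L^{2q}$ so that \Cref{cor:area1} is applied with exponent $q\ge 1$ directly, whereas you pass to $L^{q/2}$ and then treat $q<2$ by a separate H\"older step; both are fine.
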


\begin{proof}
    Let $M_1,\ldots,M_p$ and $M_{n,1},\ldots,M_{n,p}$ denote the determinants of all the $k \times k$ minors with $k = \min\{m,d\}$ 
    of the Jacobian matrices of $\sol(\aa)$ and $\sol(\aa_n)$, respectively.
    Fix $1 \le q<\infty$.
    Then, by H\"older's inequality, 
\begin{equation*}
    \big\| |J (\sol(\aa))| - |J (\sol(\aa_n))| \big\|_{L^q(U_0)} 
    \le |U_0|^{1/(2q)} \, \big\| |J (\sol(\aa))| - |J (\sol(\aa_n))| \big\|_{L^{2q}(U_0)}
\end{equation*}
and
    \begin{align*}
        \MoveEqLeft \big\| |J (\sol(\aa))| - |J (\sol(\aa_n))| \big\|_{L^{2q}(U_0)}^{2q}
        = 
        \big\| \big(\sum_i M_i^2\big)^{1/2} -\big(\sum_i M_{n,i}^2\big)^{1/2}  \big\|_{L^{2q}(U_0)}^{2q}
        \\
        &\le 
        \big\| \big| \sum_i M_i^2 -\sum_i M_{n,i}^2\big|^{1/2}  \big\|_{L^{2q}(U_0)}^{2q}   
        = 
        \big\| \sum_i M_i^2 -\sum_i M_{n,i}^2  \big\|_{L^{q}(U_0)}^{q}.
    \end{align*}
    Now it suffices to apply \Cref{cor:area1}.
\end{proof}

Next we will combine \Cref{cor:area2} with the area and the coarea formula (see e.g.\ \cite{EvansGariepy92})
which we recall for the convenience of the reader.

Let $f : \R^m \to \R^d$ be Lipschitz and let $E \subseteq \R^m$ be Lebesgue measurable. 
The \emph{area formula} states that, 
if $m \le d$, then 
\[
    \int_E |J f| \, dx = \int_{\R^d} \cH^0(E \cap f^{-1}(y)) \, d\cH^m(y).
\]
The \emph{coarea formula} posits that,
if $m \ge d$, then 
\[
    \int_E |J f| \, dx = \int_{\R^d} \cH^{m-d}(E \cap f^{-1}(y)) \, dy.
\]
Recall that $\cH^k$ denotes the $k$-dimensional Hausdorff measure, 
in particular, $\cH^0$ is the counting measure.

\begin{corollary}
   Let $U \subseteq \R^m$ be open.
    Let $\aa_n \to \aa$ in $C^d(U,\on{Hyp}(d))$ as $n\to \infty$.
    Set $\la := \sol(\aa)$ and $\la_n := \sol(\aa_n)$, for $n \ge 1$.
    \begin{enumerate}
        \item If $m\le d$, then  for each relatively compact open subset $U_0 \Subset U$,
            \[
                \int_{\R^d} \cH^0(U_0 \cap \la_n^{-1}(y)) \, d\cH^{m}(y) \to \int_{\R^d} \cH^0(U_0 \cap \la^{-1}(y)) \, d\cH^{m}(y)
            \]
            as $n \to \infty$.
        \item If $m>d$, then for each relatively compact open subset $U_0 \Subset U$,
    \[
        \int_{\R^d} \cH^{m-d}(U_0 \cap \la_n^{-1}(y)) \, dy \to \int_{\R^d} \cH^{m-d}(U_0 \cap \la^{-1}(y)) \, dy
    \]
            as $n \to \infty$.
    \end{enumerate}
\end{corollary}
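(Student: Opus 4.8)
The plan is to express both sides of each asserted convergence as an integral over $U_0$ of a single Jacobian and then to invoke \Cref{cor:area2} with $q=1$. The first observation is that, by Bronshtein's theorem (\Cref{thm:Bronshtein}), the maps $\la=\la^\uparrow\o a$ and $\la_n=\la^\uparrow\o a_n$ are locally Lipschitz on $U$, hence Lipschitz on every relatively compact open subset $U_0\Subset U$; consequently the classical area formula applies when $m\le d$ and the classical coarea formula applies when $m>d$.

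In the case $m\le d$, I would apply the area formula to $\la|_{U_0}$ and to $\la_n|_{U_0}$ to obtain
\begin{align*}
    \int_{\R^d}\cH^0\big(U_0\cap\la^{-1}(y)\big)\,d\cH^m(y) &= \int_{U_0}|J(\la^\uparrow\o a)|\,dx,
    \\
    \int_{\R^d}\cH^0\big(U_0\cap\la_n^{-1}(y)\big)\,d\cH^m(y) &= \int_{U_0}|J(\la^\uparrow\o a_n)|\,dx.
\end{align*}
Subtracting and using the triangle inequality then bounds the absolute value of the difference of the left-hand sides by $\big\||J(\la^\uparrow\o a)|-|J(\la^\uparrow\o a_n)|\big\|_{L^1(U_0)}$, which tends to $0$ as $n\to\infty$ by \Cref{cor:area2} with $q=1$; this is \thetag{1}. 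In the case $m>d$ I would run the identical argument with the coarea formula in place of the area formula, so that the quantities being compared become $\int_{\R^d}\cH^{m-d}(U_0\cap\la^{-1}(y))\,dy$ and $\int_{\R^d}\cH^{m-d}(U_0\cap\la_n^{-1}(y))\,dy$, whose difference is again dominated by $\big\||J(\la^\uparrow\o a)|-|J(\la^\uparrow\o a_n)|\big\|_{L^1(U_0)}\to 0$; this gives \thetag{2}.

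I do not expect a genuine obstacle. The analytic heart of the matter — the $L^1$-convergence of the Jacobians — is precisely \Cref{cor:area2}, while the measurability of the slice functions $y\mapsto\cH^k(U_0\cap f^{-1}(y))$ for Lipschitz $f$ is built into the standard statements of the area and coarea formulas and needs no separate verification. The only mildly delicate point is to invoke those formulas in the version that allows a Borel integrand (here the indicator function of $U_0$), which is the classical one; everything else is bookkeeping.
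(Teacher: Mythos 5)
Your proof is correct and matches the paper's argument exactly: the paper deduces this corollary as an immediate consequence of \Cref{cor:area2} (with $q=1$) via the area formula when $m\le d$ and the coarea formula when $m>d$, which is precisely what you do. Nothing more is needed.
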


\begin{proof}
    This is an immediate consequence of \Cref{cor:area2} (for $q=1$) and the area and coarea formula.
\end{proof}

We can also conclude that the surface area of the graphs of all the single roots $\sol(\aa_n)_j =\la^\uparrow_j \o \aa_n$, for $1 \le j \le d$,
is locally convergent as $n \to \infty$.

\begin{corollary} \label[c]{cor:area4}
    Let $U \subseteq \R^m$ be open.
    Let $\aa_n \to \aa$ in $C^d(U,\on{Hyp}(d))$ as $n\to \infty$.
    For each $1 \le j \le d$ and for each relatively compact open subset $U_0 \Subset U$, 
    the surface area of the graph of $\la_{n,j} := \sol(\aa_n)_j$ converges to the surface area 
    of the graph of $\la_j:=\sol(\aa)_j$ as $n \to \infty$:
    if $\ol \la_{n,j}(x) := (x,\la_{n,j}(x))$ and $\ol \la_{j}(x) := (x,\la_j(x))$ denote 
    the corresponding graph mappings,
    then 
    \begin{align*}
        \cH^m(\ol \la_{n,j}(U_0)) \to  \cH^m(\ol \la_{j}(U_0)) \quad \text{ as }  n\to \infty.
    \end{align*}
\end{corollary}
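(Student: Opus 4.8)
The plan is to express each surface area by the area formula and then pass to the limit using the $L^1$ convergence of gradients provided by \Cref{thm:multhyp}. Recall first that for a Lipschitz function $g : U_0 \to \R$ on a bounded open set $U_0 \subseteq \R^m$, the graph map $\ol g : U_0 \to \R^{m+1}$, $\ol g(x) = (x,g(x))$, is injective and Lipschitz, so the area formula yields
\[
    \cH^m(\ol g(U_0)) = \int_{U_0} |J\ol g(x)|\, dx = \int_{U_0} \sqrt{1 + \|\nabla g(x)\|_2^2}\, dx ;
\]
the last equality holds because the $m \times m$ minors of the Jacobian of $\ol g$ are the one obtained by deleting the last row (determinant $1$) and the $m$ ones obtained by deleting one of the first $m$ rows (determinants $\pm\,\p_i g$), so $|J\ol g|^2 = 1 + \sum_{i=1}^m (\p_i g)^2$. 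By \Cref{thm:multhyp}, $\{\la^\uparrow \o a_n : n \ge 1\}$ is bounded in $C^{0,1}(U,\R^d)$, hence each $\la_{n,j}$, as well as $\la_j$, is Lipschitz on $\ol U_0$, and the displayed identity applies to $g = \la_{n,j}$ and $g = \la_j$.

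It then remains to show $\int_{U_0}\sqrt{1+\|\nabla\la_{n,j}\|_2^2}\,dx \to \int_{U_0}\sqrt{1+\|\nabla\la_{j}\|_2^2}\,dx$. The map $\Phi : \R^m \to \R$, $\Phi(v) := \sqrt{1+\|v\|_2^2}$, is $1$-Lipschitz, so for almost every $x \in U_0$,
\[
    \big| \sqrt{1+\|\nabla\la_{j}(x)\|_2^2} - \sqrt{1+\|\nabla\la_{n,j}(x)\|_2^2}\big| \le \|\nabla\la_j(x) - \nabla\la_{n,j}(x)\|_2 \le \sum_{i=1}^m |\p_i\la_j(x) - \p_i\la_{n,j}(x)|.
\]
Using $|\p_i\la_j - \p_i\la_{n,j}| \le \|\p_i\la - \p_i\la_n\|_2$ pointwise and integrating over $U_0$, the right-hand side is bounded by $\sum_{i=1}^m \|\p_i\la - \p_i\la_n\|_{L^1(U_0,\R^d)}$, which tends to $0$ as $n \to \infty$ by \Cref{thm:multhyp} (with $q=1$). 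Thus $\big\| |J\ol\la_{j}| - |J\ol\la_{n,j}| \big\|_{L^1(U_0)} \to 0$, and in particular the integrals, i.e.\ the surface areas, converge: $\cH^m(\ol\la_{n,j}(U_0)) \to \cH^m(\ol\la_{j}(U_0))$.

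There is no substantial obstacle here; the only points that need a little care are the injectivity of the graph map (so that the area formula computes the Hausdorff measure of the graph without a multiplicity factor), the Lipschitz continuity of $\la_{n,j}$ on $\ol U_0$ (supplied by Bronshtein's theorem via \Cref{thm:multhyp}), and the elementary remark that convergence of the single component $\la_j$ follows from the vector-valued statement of \Cref{thm:multhyp}. Alternatively, one may avoid the Lipschitz map $\Phi$ and instead repeat the proof of \Cref{cor:area2}: apply \Cref{cor:area1} to the polynomial $R = 1 + X_1^2 + \cdots + X_m^2$ in the relevant block of variables to obtain $\big\|\,\|\nabla\la_{j}\|_2^2 - \|\nabla\la_{n,j}\|_2^2\big\|_{L^q(U_0)} \to 0$ for all $1 \le q < \infty$, and then combine $|\sqrt a - \sqrt b| \le \sqrt{|a-b|}$ with H\"older's inequality on the bounded set $U_0$.
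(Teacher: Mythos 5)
Your argument is correct, and it follows the same overall scheme as the paper (express each surface area as $\int_{U_0}|J\ol\la_{j}|$ via the area formula, and show $L^1$-convergence of the integrands), but you use a cleaner mechanism for the convergence step. The paper estimates
\[
    \big\| |J\ol\la_{j}| - |J\ol\la_{n,j}| \big\|_{L^2(U_0)}^2
    \le \big\| \tfrac{}{}\textstyle\sum_i (\p_i\la_j)^2 - \sum_i(\p_i\la_{n,j})^2 \big\|_{L^1(U_0)}
\]
using $|\sqrt a - \sqrt b|\le\sqrt{|a-b|}$, and then invokes \Cref{cor:area1} (which in turn is proved by induction on polynomial degree together with H\"older's inequality and the $C^{0,1}$-boundedness). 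Your route bypasses \Cref{cor:area1} entirely: the observation that $\Phi(v)=\sqrt{1+\|v\|_2^2}$ is $1$-Lipschitz gives the pointwise bound $\big| |J\ol\la_j| - |J\ol\la_{n,j}| \big| \le \|\nabla\la_j - \nabla\la_{n,j}\|_2$, which reduces the claim directly to the $W^{1,1}$-convergence in \Cref{thm:multhyp}. This is more elementary and self-contained; the paper's route via \Cref{cor:area1} is more uniform with its treatment of \Cref{cor:area2} (where the polynomial-in-minors structure genuinely matters). You also correctly flag the one point that needs care in the area-formula step, namely injectivity of the graph map, which the paper leaves implicit. Finally, your last paragraph correctly identifies the paper's actual argument as the alternative route.
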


\begin{proof}
    We have
    \[
        |J\ol \la_{j}| = \Big(1 + \sum_{i=1}^m (\p_i \la_{j})^2\Big)^{1/2}
        \quad \text{ and } \quad
        |J\ol \la_{n,j}| = \Big(1 + \sum_{i=1}^m (\p_i \la_{n,j})^2\Big)^{1/2}.
    \]
    As in the proof of \Cref{cor:area2}, we have
    \begin{align*}
      \MoveEqLeft  \Big\| \Big(1 + \sum_{i=1}^m (\p_i \la_{j})^2\Big)^{1/2}- \Big(1 + \sum_{i=1}^m (\p_i \la_{n,j})^2\Big)^{1/2}\Big\|_{L^2(U_0)}^2
      \\
      &\le  \Big\| \Big|\sum_{i=1}^m (\p_i \la_{j})^2-   \sum_{i=1}^m (\p_i \la_{n,j})^2\Big|^{1/2}\Big\|_{L^2(U_0)}^2
      \\
      &= \Big\| \sum_{i=1}^m (\p_i \la_{j})^2-   \sum_{i=1}^m (\p_i \la_{n,j})^2\Big\|_{L^1(U_0)}.
    \end{align*}
    So the assertion follows from \Cref{cor:area1} and the area formula.
\end{proof}

It follows that the area of the zero sets of $C^d$ families of hyperbolic polynomials of degree $d$ 
locally has a lower semicontinuity property; 
for the reader's convenience, we restate \Cref{cor:area5intro}:

\begin{corollary} \label[c]{cor:area5}
    Let $U \subseteq \R^m$ be open.
    Let $\aa_n \to \aa$ in $C^d(U,\on{Hyp}(d))$ as $n\to \infty$.
    For any relatively compact open $U_0 \Subset U$, 
            consider the zero sets
            \begin{align*}
                Z &= \{ (x,y) \in U_0 \times \R : P_{\aa(x)}(y)=0\} \quad \text{ and }
                \\
                Z_n &= \{ (x,y) \in U_0 \times \R : P_{\aa_n(x)}(y)=0\}, \quad n\ge 1.
            \end{align*}
    Then 
    \begin{align*}
        \liminf_{n \to \infty}  \cH^m(Z_n) \ge  \cH^m(Z).
    \end{align*}
\end{corollary}

\begin{proof}
    Set $\la := \sol(\aa)$.
    For $i=2,\ldots,d$, let $E_{i} := \{x \in U_0 : \la_{i-1}(x) = \la_{i}(x)\}$. 
    Then, using the notation of \Cref{cor:area4},
    \begin{equation} \label{eq:HmZ}
        \cH^m(Z) = \cH^m(\ol \la_1(U_0)) + \sum_{i=2}^d \cH^m(\ol \la_i(U_0\setminus E_i)). 
    \end{equation}
    Analogously, setting $\la_n := \sol(\aa_n)$ and $E_{n,i} := \{x \in U_0 : \la_{n,i-1}(x) = \la_{n,i}(x)\}$, we have 
    \begin{equation} \label{eq:HmZn}
        \cH^m(Z_n) = \cH^m(\ol \la_{n,1}(U_0)) + \sum_{i=2}^d \cH^m(\ol \la_{n,i}(U_0\setminus E_{n,i})). 
    \end{equation}

    By the continuity of $\la^\uparrow : \on{Hyp}(d) \to \R^d$, for each $i =2,\ldots,d$ and each $x \in U_0$,
    \begin{equation} \label{eq:limsup1}
        \limsup_{n\to \infty} \mathbf 1_{E_{n,i}}(x) \le \mathbf 1_{E_i}(x).
    \end{equation}
   
    By \Cref{thm:ptwhyp} (applied coordinate by coordinate), we have that $|J \ol \la_{n,i}| \to |J \ol \la_i|$ as $n \to \infty$ 
    almost everywhere in $U_0$. By Bronshtein's theorem (\Cref{thm:Bronshtein}),  
    there is a constant $B>0$ such that 
    $\big\| |J \ol \la_{n,i}| \big\|_{L^\infty(U_0)} \le B$ for all $n \ge 1$.
    Thus,
    by the area formula and the reverse Fatou lemma, 
    \begin{align*}
        \limsup_{n\to \infty} \cH^m(\ol \la_{n,i}(E_{n,i})) 
        &=  \limsup_{n\to \infty} \int_{U_0} \mathbf 1_{E_{n,i}} |J \ol \la_{n,i}|\, dx
        \\
        &\le   \int_{U_0} \limsup_{n\to \infty}\big(\mathbf 1_{E_{n,i}} |J \ol \la_{n,i}| \big)\, dx
        \\
        &\le   \int_{U_0} \limsup_{n\to \infty} \mathbf 1_{E_{n,i}} \cdot \limsup_{n\to \infty} |J \ol \la_{n,i}| \, dx
        \\
        &\le   \int_{E_i}   |J \ol \la_{i}| \, dx = \cH^m(\ol \la_i(E_i)),
    \end{align*}
    where we used \eqref{eq:limsup1} in the last inequality.

    Together with \eqref{eq:HmZ}, \eqref{eq:HmZn}, and \Cref{cor:area4}, this gives 
    \begin{align*}
        \liminf_{n \to \infty}  \cH^m(Z_n) 
        &= \liminf_{n \to \infty} \Big( \sum_{i=1}^d \cH^m(\ol \la_{n,i}(U_0)) - \sum_{i=2}^d \cH^m(\ol \la_{n,i}(E_{n,i}))  \Big)
        \\
        &=  \sum_{i=1}^d \liminf_{n \to \infty}\cH^m(\ol \la_{n,i}(U_0)) - \sum_{i=2}^d \limsup_{n\to \infty} \cH^m(\ol \la_{n,i}(E_{n,i})) 
        \\
        &\ge \sum_{i=1}^d \cH^m(\ol \la_{i}(U_0)) - \sum_{i=2}^d \cH^m(\ol \la_i(E_i))
        = \cH^m(Z)
    \end{align*}
    which ends the proof.
\end{proof}

\subsection{Approximation by hyperbolic polynomials with all roots distinct} \label{ssec:app3}

We recall a lemma of Wakabayshi \cite{Wakabayashi86} which extends an observation of Nuij \cite{Nuij68}. 

\begin{lemma}[{\cite[Lemma 2.2]{Wakabayashi86}}] \label[l]{lem:Wakabayashi}
    Let $P_\aa \in \on{Hyp}(d)$ and set 
    \begin{equation} \label{eq:diffop}
        P_{\aa,s}(Z) :=  (1 + s \tfrac{\p}{\p Z})^{d-1} P_\aa(Z), \quad s \in \R.
    \end{equation}
    Then $P_{\aa,s} \in \on{Hyp}(d)$ for all $s \in \R$ and there are positive constants $c_i=c_i(d)$, $i=1,2$,
    such that, if $\la^\uparrow_1(\aa,s) \le \cdots \le \la^\uparrow_d(\aa,s)$ denote the increasingly ordered roots of $P_{\aa,s}$, then 
    \begin{equation} \label{eq:Waka1}
        \la^\uparrow_j (\aa,s) - \la^{\uparrow}_{j-1}(\aa,s) \ge c_1 |s|, \quad \text{ for } s \in \R \text{ and } 2 \le j \le d,   
    \end{equation}
    and 
    \begin{equation}
        0 < \pm (\la^\uparrow_j(\aa) - \la^\uparrow_j(\aa,s)) \le c_2 |s|, \quad \text{ for } \pm s >0 \text{ and } 1 \le j \le d.
    \end{equation}
\end{lemma}

In conjunction with our findings, \Cref{lem:Wakabayashi} leads to the following approximation result.

\begin{corollary} \label[c]{cor:approx}
    Let $U \subseteq \R^m$ be open and 
    $\aa \in C^d(U, \on{Hyp}(d))$. There exists a sequence $(\aa_n)_{n\ge 1} \subseteq C^d(U, \on{Hyp}(d))$ with the following properties: 
    \begin{enumerate}
        \item $\aa_n \to \aa$ in $C^d(U,\on{Hyp}(d))$ as $n \to \infty$;
        \item $\sol(\aa_n)_1(x) < \sol(\aa_n)_2(x) < \cdots < \sol(\aa_n)_d(x)$ for all $x \in U$ and all $n \ge 1$;
        \item $\sol(\aa_n) \in C^d(U,\R^d)$ for all $n \ge 1$;   
        \item $\sol(\aa_n) \to \sol(\aa)$ in $C^{0,1}_{q}(U,\R^d)$, for all $1 \le q<\infty$, as $n \to \infty$; 
        \item for any relatively compact open $U_0 \Subset U$, 
            consider the zero sets
            \begin{align*}
                Z &= \{ (x,y) \in U_0 \times \R : P_{\aa(x)}(y)=0\} \quad \text{ and }
                \\
                Z_n &= \{ (x,y) \in U_0 \times \R : P_{\aa_n(x)}(y)=0\}, \quad n\ge 1.
            \end{align*}
            Then $\lim_{n \to \infty} \cH^m(Z_n)$ exists and
            \[
                \lim_{n \to \infty} \cH^m(Z_n) \ge \cH^m(Z).
            \]
    \end{enumerate}
\end{corollary}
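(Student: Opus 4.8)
The plan is to take $a_n$ to be a Nuij--Wakabayashi smoothing of $a$: fix a sequence $s_n \to 0$ with $s_n \ne 0$ (for instance $s_n = 1/n$) and let $P_{a_n(x)} := P_{a(x),s_n}$ for $x \in U$, with $P_{\cdot,\cdot}$ as in \eqref{eq:diffop}. First I would record, by expanding the constant-coefficient operator $(1+s\tfrac{\p}{\p Z})^{d-1}$, that for $1 \le \ell \le d$ the coefficient $a_{n,\ell}$ is a polynomial in $s_n$ of degree at most $\ell$ whose coefficients are $\R$-linear in $a_1,\dots,a_\ell$ and which reduces to $a_\ell$ at $s_n = 0$. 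This gives at once that $a_n \in C^d(U,\R^d)$, that $\|a-a_n\|_{C^d(\ol U_1,\R^d)} = O(|s_n|)$ on each relatively compact $U_1 \Subset U$ — which is property \thetag{1} — and, combined with the pointwise content of \Cref{lem:Wakabayashi}, that $a_n(x) = P_{a(x),s_n} \in \on{Hyp}(d)$, so that $a_n \in C^d(U,\on{Hyp}(d))$.

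Property \thetag{2} is then immediate from \eqref{eq:Waka1}, since $s_n \ne 0$ forces consecutive roots of $P_{a_n(x)}$ to be at distance at least $c_1|s_n|>0$. For \thetag{3} I would fix $x_0 \in U$, note that $P_{a_n(x_0)}$ has $d$ simple roots, and apply the implicit function theorem to the map $(x,Z)\mapsto P_{a_n(x)}(Z)$ (which is $C^d$ in $x$ and polynomial in $Z$) to obtain $C^d$ local root branches near $x_0$; the uniform gap from \eqref{eq:Waka1} guarantees that these branches never collide and remain correctly ordered, so they glue to a global map $\la^\uparrow\o a_n \in C^d(U,\R^d)$. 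Property \thetag{4} is property \thetag{1} fed into \Cref{thm:solmap}.

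The only part with real content is \thetag{5}. Here I would fix a relatively compact open $U_0 \Subset U$, write $\la_j := \la^\uparrow_j \o a$ and $\la_{n,j} := \la^\uparrow_j \o a_n$ with graph maps $\ol\la_j(x):=(x,\la_j(x))$, $\ol\la_{n,j}(x):=(x,\la_{n,j}(x))$ as in \Cref{cor:area4}, and observe that for each fixed $x\in U_0$ the fibre of $Z_n$ over $x$ is exactly $\{\la_{n,1}(x),\dots,\la_{n,d}(x)\}$, which by \thetag{2} consists of $d$ distinct points; hence $Z_n = \bigsqcup_{j=1}^d \ol\la_{n,j}(U_0)$ is a \emph{disjoint} union and $\cH^m(Z_n) = \sum_{j=1}^d \cH^m(\ol\la_{n,j}(U_0))$. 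Applying \Cref{cor:area4} termwise shows that $\lim_{n\to\infty} \cH^m(Z_n)$ exists and equals $\sum_{j=1}^d \cH^m(\ol\la_j(U_0))$. On the other hand $Z = \bigcup_{j=1}^d \ol\la_j(U_0)$, a union that is in general \emph{not} disjoint (the graphs meet over the discriminant locus of $a$), and each $\ol\la_j(U_0)$ is the injective Lipschitz image — Lipschitz by \Cref{thm:Bronshtein} — of the bounded set $U_0$, hence $\cH^m$-measurable with finite measure; subadditivity of $\cH^m$ then yields $\cH^m(Z) \le \sum_{j=1}^d \cH^m(\ol\la_j(U_0)) = \lim_{n\to\infty}\cH^m(Z_n)$, which is the asserted inequality.

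Overall the statement is a synthesis of earlier results, so I do not anticipate a serious obstacle. The point needing the most care is \thetag{3}: one must check that the local $C^d$ branches produced by the implicit function theorem assemble into a single, globally defined and correctly ordered $C^d$ map, which is exactly where the uniform root gap \eqref{eq:Waka1} enters. A minor secondary subtlety is that in \thetag{5} one genuinely gets an inequality rather than an equality, forced precisely by the overlap of the limiting graphs $\ol\la_j(U_0)$ over the discriminant locus while the approximating graphs stay disjoint.
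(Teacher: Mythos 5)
Your proposal is correct and takes essentially the same approach as the paper: both use the Nuij--Wakabayashi smoothing $P_{a(x),s_n}$, derive \thetag{1}--\thetag{4} from the linearity in $s_n$ of the coefficients, \eqref{eq:Waka1}, the implicit function theorem, and \Cref{thm:solmap}, and then prove \thetag{5} by splitting $Z_n$ into a disjoint union of graphs, applying \Cref{cor:area4}, and using subadditivity of $\cH^m$ for $Z$. The only cosmetic differences are that the paper restricts to $s_n>0$ (unnecessarily for the argument, as you correctly observe) and that you spell out slightly more explicitly why the coefficient map is $C^d$-close to the identity and why the local $C^d$ root branches glue globally.
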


\begin{proof}
    Let $(s_n)_{n\ge 1}$ be any positive sequence of reals that tends to $0$. 
    Consider the polynomial $P_{\aa(x),s_n}$ (defined in \eqref{eq:diffop}), where $x \in U$, and let $\aa_n(x)$ be its coefficient vector.
    Then, by \Cref{lem:Wakabayashi},
    $\aa_n \in C^d(U,\on{Hyp}(d))$, for $n\ge 1$.
    We will show that the sequence $(\aa_n)_{n\ge 1}$ has the desired properties.

    (1) This is clear by the definition \eqref{eq:diffop} and since $s_n \to 0$ as $n \to \infty$.

    (2) follows from \eqref{eq:Waka1} and the fact that $s_n>0$ for all $n \ge 1$.

    (3) For fixed $x \in U$, $\frac{\p}{\p Z} P_{\aa_n(x)}(Z)$ does not vanish at any root of $P_{\aa_n(x)}$, by (2).
    So, by the implicit function theorem, the roots of $P_{\aa_n(x)}$ are of class $C^d$ in a neighborhood of $x$.
    This implies (3).

    (4) is a consequence of (1) and \Cref{thm:solmap}.

    (5) Using the notation of \Cref{cor:area4}, for each $n\ge 1$, the set 
    $Z_n$ is the union of the graphs $\ol \la_{n,j}(U_0)$ of the single roots $\la_{n,j}|_{U_0} = \sol(\aa_n)_j|_{U_0}$, for $1 \le j \le d$, 
    and these graphs are pairwise disjoint, by (2).
    Thus, by (1) and \Cref{cor:area4}, 
    \begin{align*}
        \lim_{n\to \infty} \cH^m(Z_n) 
        = \lim_{n \to \infty}\sum_{j=1}^d \cH^m(\ol \la_{n,j}(U_0)) = \sum_{j=1}^d \cH^m(\ol \la_j(U_0)) \ge \cH^m(Z). 
    \end{align*}
    For the inequality at the end, note that the union $Z = \bigcup_{j=1}^d \ol \la_j(U_0)$ is not necessarily disjoint; 
    see also \Cref{cor:area5}.
\end{proof}

\subsection{Perturbation theory for Hermitian matrices} \label{ssec:Hermitian}

Let $\on{Herm}(d)$ denote the real vector space of complex Hermitian $d \times d$ matrices.
With $A \in \on{Herm}(d)$ we associate its increasingly ordered eigenvalues 
$\la_1^\uparrow(A) \le \la_2^\uparrow(A) \le \cdots \le \la_d^\uparrow(A)$
and thus obtain a continuous map
\begin{equation} \label{eq:Herm}
    \la^\uparrow = (\la_1^\uparrow, \ldots, \la_d^\uparrow) : \on{Herm}(d) \to \R^d.
\end{equation}

\begin{proposition}[Weyl's perturbation theorem \cite{Weyl12}; see e.g.\ {\cite[III.2.6]{Bhatia97}}] \label[p]{p:Weyl}
    Let $A, B \in \on{Herm}(d)$. Then 
    \begin{equation} \label{eq:Weyl}
       \|\la^\uparrow(A)-\la^\uparrow(B)\|_\infty \le \|A-B\|,
   \end{equation}
   where $\|A-B\|$ denotes the operator norm of $A-B$.
\end{proposition}

In \cite{Bhatia97}, the result is stated for eigenvalue vectors with decreasing eigenvalues, but reversing the order evidently 
leaves the left-hand side of \eqref{eq:Weyl} unchanged.

As a consequence of \Cref{p:Weyl},
the map \eqref{eq:Herm} induces a bounded map
\begin{equation} \label{eq:E}
   \eig:= (\la^\uparrow)_* : C^{0,1}(\ol I, \on{Herm}(d)) \to C^{0,1}(\ol I, \R^d), \quad A \mapsto \la^\uparrow \o A,
\end{equation}
which takes Lipschitz curves of Hermitian matrices to Lipschitz curves of their increasingly ordered eigenvalues. The Lipschitz constants 
satisfy
\begin{equation} \label{eq:eigLip}
    |\eig(A)|_{C^{0,1}(\ol I,\R^d)} \le |A|_{C^{0,1}(\ol I,\on{Herm}(d))},
\end{equation}
if $\on{Herm}(d)$ is endowed with the operator norm and $\R^d$ with the maximum norm.
This remains true if we replace the interval $I$ by a bounded open set  $U \subseteq \R^m$.

The following corollary includes \Cref{t:eigmap}.

\begin{corollary} \label[c]{c:eigmap}
    Let $U \subseteq \R^m$ be open.  Then:
    \begin{enumerate}
        \item 
    The map    
    \begin{equation*} 
        \eig : C^{d}(U,\on{Herm}(d)) \to C^{0,1}_q(U,\R^d), \quad A \mapsto \la^\uparrow \o A,
    \end{equation*}
    is continuous, for all $1\le q<\infty$.
    \item
    The map    
    \begin{equation*} 
        \eig : C^{d}(U,\on{Herm}(d)) \to C^{0,\al}(U,\R^d), \quad A \mapsto \la^\uparrow \o A,
    \end{equation*}
    is continuous, for all $0< \al<1$.
    \item
      If $A_n \to A$ in $C^{d}(U,\on{Herm}(d))$ as $n \to \infty$, then over each relatively compact open subset $U_0 \Subset U$ 
      the surface area of the graph of $\eig(A_n)_j$ converges to the  surface area of the graph of $\eig(A)_j$ as $n \to \infty$, for each $1 \le j \le d$.
    \end{enumerate}
\end{corollary}

\begin{proof}
    (1) We have the commuting diagram
    \[
        \xymatrix{
            C^{d}(U,\on{Herm}(d)) \ar[dr]_{\cP} \ar[rr]^{\eig} && C^{0,1}_q(U,\R^d)
            \\
                                                 &  C^{d}(U,\on{Hyp}(d)) \ar[ru]_{\sol} &
        }
    \]
    where $\cP$ sends $A$ to its characteristic polynomial $P_A$. The coefficients of $P_A$ are given by 
    polynomials in the entries of $A$. Thus, \Cref{prop:lefttr} implies that the map $\cP$ is continuous.
    Consequently, $\eig = \sol \o \cP$ is continuous by \Cref{thm:solmap}.

    (2) This follows similarly from the continuity of $\cP$ and \Cref{cor:solmap}.

    (3) Use \Cref{cor:area4} and the continuity of $\cP$.
\end{proof}

The following example shows that $\eig$ is not continuous with respect to the 
$C^{0,1}$ topology on the target space.

\begin{example} \label[e]{ex:A}
    The sequence $(A_n)_n$ of curves of symmetric $2 \times 2$ matrices 
    \[
        A_n(x) = \begin{pmatrix} \frac{1}n & x \\ x & - \frac{1}{n}  \end{pmatrix}, \quad x \in \R,
    \]
    converges to 
    \[
        A(x) = \begin{pmatrix} 0 & x \\ x & 0  \end{pmatrix}, \quad x \in \R,
    \]
    uniformly on every compact interval in all derivatives. 
    We have 
    \[
        \eig(A_n)(x) = \Big(-\sqrt{x^2 + \frac{1}{n^2}}, \sqrt{x^2 + \frac{1}{n^2}}\Big)
    \]
    and 
    \[
        \eig(A)(x) = (-|x|,|x|).
    \]
    Hence, \Cref{ex:Br-continuity} shows that the Lipschitz constant of $\eig(A)-\eig(A_n)$ on each bounded open interval containing $0$ 
    is bounded below by $2-\sqrt 2$
    which shows that $\eig(A_n) \not\to \eig(A)$ in the $C^{0,1}$ topology.
\end{example}

Given that the map $\eig$ is defined and bounded on $C^{0,1}(U,\on{Herm}(d))$ (see \eqref{eq:E} and \eqref{eq:eigLip}),
it is natural to ask whether in \Cref{c:eigmap} one can replace $C^d$ by $C^1$, see \Cref{q:AC1}.

If $d=2$, this is indeed the case as evidenced in the following proposition.

\begin{proposition} \label[p]{p:Ad=2}
    Let $U \subseteq \R^m$ be open.  Then:
    \begin{enumerate}
        \item 
    The map    
    \begin{equation*} 
        \eig : C^{1}(U,\on{Herm}(2)) \to C^{0,1}_q(U,\R^2), \quad A \mapsto \la^\uparrow \o A,
    \end{equation*}
    is continuous, for all $1\le q<\infty$.
    \item
    The map    
    \begin{equation*} 
        \eig : C^{1}(U,\on{Herm}(2)) \to C^{0,\al}(U,\R^2), \quad A \mapsto \la^\uparrow \o A,
    \end{equation*}
    is continuous, for all $0< \al<1$.
    \item
      If $A_n \to A$ in $C^{1}(U,\on{Herm}(2))$ as $n \to \infty$, then over each relatively compact open subset $U_0 \Subset U$ 
      the surface area of the graph of $\eig(A_n)_j$ converges to the  surface area of the graph of $\eig(A)_j$ as $n \to \infty$, for $j=1,2$.
    \end{enumerate}
\end{proposition}

\begin{proof}
    It suffices to prove (1) in the case $m=1$. Then the multiparameter version of (1) as well as (2) and (3) follow by the arguments given in detail for hyperbolic polynomials, 
    if one uses \Cref{p:Weyl} instead of Bronshtein's theorem \ref{thm:Bronshtein}.

    Let us show (1) for $m=1$.
    We may assume that the trace of $A$ vanishes, by replacing $A$ by $A - \frac{1}{2}\on{tr}(A)\I$. Thus, we have 
    \[
        A = 
        \begin{pmatrix}
            a & b+i c \\ b-i c & -a
        \end{pmatrix},
    \]
    where $a,b,c \in \R$, and 
    \[
        \la^\uparrow(A) = \big(- \sqrt{a^2 + b^2 +c^2}, \sqrt{a^2 + b^2 +c^2}\big).
    \]
    
    Let us assume that $a,b,c \in C^1(I,\R)$, where $I \subseteq \R$ is an open interval. Then
    \[
        I \ni x \mapsto \sqrt{a(x)^2 + b(x)^2 +c(x)^2} =: \mu(x)
    \]
    is locally Lipschitz, differentiable almost everywhere, and 
    \begin{equation} \label{eq:mu}
        |\mu'(x_0)| \le \sup_{x \in \ol I_1}\|A'(x)\|_2 = \sqrt 2 \,\sup_{x \in \ol I_1} \sqrt{a'(x)^2 + b'(x)^2 + c'(x)^2},
    \end{equation}
    for each relatively compact open subinterval $I_1 \Subset I$ and each $x_0 \in I_1$ where $\mu'(x_0)$ exists, by \eqref{eq:eigLip}.

    Suppose that 
    \[
        A_n = 
        \begin{pmatrix}
            a_n & b_n+i c_n \\ b_n-i c_n & -a_n
        \end{pmatrix}
        \longrightarrow 
        A = 
        \begin{pmatrix}
            a & b+i c \\ b-i c & -a
        \end{pmatrix} \quad \text{ in } C^1(I,\on{Herm}(2))
    \]
    as $n \to \infty$. 
    It suffices to prove that 
    \begin{equation} \label{eq:muW}
        \|\mu - \mu_n\|_{W^{1,q}(I_1)} \to 0 \quad \text{ as } n \to \infty,
    \end{equation}
    for each relatively compact open subinterval $I_1 \Subset I$ and all $1 \le q < \infty$, where 
    \[
        \mu_n := \sqrt{a_n^2 + b_n^2 +c_n^2}.
    \]

    By \eqref{eq:Weyl}, we have 
    \[
        \|\mu - \mu_n\|_{L^\infty(I_1)} \to 0 \quad \text{ as } n \to \infty,
    \]
    for each relatively compact open subinterval $I_1 \Subset I$.

    We claim that, for almost every $x \in I$, 
    \[
        \mu_n'(x) \to \mu'(x) \quad \text{ as } n \to \infty.
    \]

    This is clear on the set $\Om := \{x \in I : a(x)^2 + b(x)^2 +c(x)^2 \ne 0\}$: for each $x_0 \in \Om$, the derivative $\mu'(x_0)$ exists, and, by assumption, 
    $a(x_0)^2 + b(x_0)^2 +c(x_0)^2 \ne 0$ if $n$ is large enough so that also $\mu_n'(x_0)$ exists and $\mu_n'(x_0) \to \mu'(x_0)$. 

    Now consider $Z := \{x \in I : a(x)^2 + b(x)^2 +c(x)^2 = 0\}$ and the set $\on{acc}(Z)$ of accumulation points of $Z$. 
    Note that $a'$, $b'$, and $c'$ vanish on $\on{acc}(Z)$.
    Fix $x_0 \in \on{acc}(Z)$ and $\ep>0$. By continuity, there exists $\de>0$ such that $\ol I(x_0,\de) \Subset I$ and 
    \[
        \sup_{x \in \ol I(x_0,\de)} \sqrt{a'(x)^2 + b'(x)^2 + c'(x)^2} \le \frac{\ep}2.
    \]
    As $A_n \to A$ in $C^1(I,\on{Herm(2)})$, there is $n_0 \ge 1$ such that, for all $n \ge n_0$, 
    \[
        \sup_{x \in \ol I(x_0,\de)} \sqrt{a_n'(x)^2 + b_n'(x)^2 + c_n'(x)^2} \le \ep.
    \]
    If $\mu_n'(x_0)$ exists, then we conclude, by \eqref{eq:mu}, that 
    \[
        |\mu_n'(x_0)| \le \sqrt 2\, \ep, \quad n \ge n_0.  
    \] 
    This implies the claim, since the set of accumulation points of $Z$, where all $\mu_n$ and $\mu$ are differentiable, has full measure in $Z$
    and $\mu'$ vanishes on this set.

    Now the dominated convergence theorem implies that 
    \[
        \|\mu'- \mu_n'\|_{L^q(I_1)} \to 0 \quad \text{ as } n \to \infty,
    \]
    for each relatively compact open subinterval $I_1 \Subset I$ and all $1 \le q < \infty$, 
    completing the proof of \eqref{eq:muW}.
\end{proof}

\begin{remark}\label[r]{rem:eigenvalues}
    \Cref{c:eigmap} has an evident analogue for skew-Hermitian matrices which simply follows from the fact 
    that a $d \times d$ matrix $A$ is skew-Hermitian if and only if $i A$ is Hermitian. The eigenvalues of $A$ 
    and $iA$ just differ by multiplication by $i$.

    On the other hand, there is no consistent continuous choice of the eigenvalues of unitary $d \times d$ matrices. 
    Consider, for example, 
    the curve of unitary matrices 
    \[
       A(x) = \begin{pmatrix} 0 & e^{2 \pi i x} \\ 1 & 0 \end{pmatrix}, \quad x \in \R, 
    \]
    with the eigenvalues $\la_{\pm}(x) = \pm e^{\pi i x}$. Even though $\la_{\pm} : \R \to (\mathbb S^1)^2$ is continuous, 
    there is no continuous choice of the eigenvalues $\mathbb S^1 \to (\mathbb S^1)^2$ of 
    the curve of unitary matrices induced by $A$ on $\mathbb S^1 = \R/\Z$ (because 
    $\la_{\pm}(0) = \pm 1 \ne \mp 1 = \la_{\pm}(1)$). 

    In this case, and more generally for normal matrices, the general continuity results of \cite{Parusinski:2024ab}
    apply. For the perturbation theory of normal matrices, see \cite{RainerN}, \cite{Parusinski:2020ac}, and the 
    survey \cite{Parusinski:2023ab}.
\end{remark}

\subsection{Singular values} \label{ssec:singval}

Let us consider the vector space $M_{D,d}(\C)$ of complex $D \times d$ matrices, where $d \le D$. 
The singular values of $A \in M_{D,d}(\C)$ are the nonnegative square roots of the eigenvalues 
of the Hermitian matrix $A^*A$, usually ordered decreasingly
\[
    \si_1(A) \ge \si_2(A) \ge \cdots \ge \si_d(A) \ge 0.
\]
This defines a map $\si=(\si_1,\ldots,\si_d) : M_{D,d}(\C) \to \R^d$.

Let us consider the real vector space $M_{D,d}(\C) \times \R$ and the homogeneous polynomial 
of degree $2d$,
\[
    f(A,r) := \det(r^2 \, \mathbb I - A^*A ), \quad (A,r) \in M_{D,d}(\C) \times \R.
\]
Then $f$ is \emph{G\r{a}rding hyperbolic} with respect to the direction $(0,1)\in M_{D,d}(\C) \times \R$ 
(see \cite[Sec.\ 6]{BGLS01})
which means by definition that all roots of the univariate polynomial
\[
    P_{A,r}(Z) :=  f((A,r) - Z\, (0,1)) = \det((r-Z)^2\, \mathbb I - A^*A) 
\]
are real. Indeed, the roots of $P_{A,r}$ (in decreasing order) are
\[
    r+\si_1(A), r+\si_2(A), \ldots, r+\si_d(A), r-\si_d(A), \ldots,r-\si_1(A). 
\]
Hence, by \Cref{thm:Bronshtein}, $\si$ induces a bounded map
\begin{equation*} 
    \si_* : C^{2d-1,1}(U,M_{D,d}(\C)) \to C^{0,1}(U,\R^d), \quad A \mapsto \si \o A,
\end{equation*}
where $U \subset \R^m$ is open. In general, this map is not continuous, which follows from \Cref{ex:A},
but we have the following result.

\begin{corollary} \label[c]{c:singval}
    Let $U \subseteq \R^m$ be open.  Then:
    \begin{enumerate}
        \item 
    The map    
    \begin{equation*} 
        \si_* : C^{2d}(U,M_{D,d}(\C)) \to C^{0,1}_q(U,\R^d), \quad A \mapsto \si \o A,
    \end{equation*}
    is continuous, for all $1\le q<\infty$.
    \item
    The map    
    \begin{equation*} 
    \si_* : C^{2d}(U,M_{D,d}(\C)) \to C^{0,\al}(U,\R^d), \quad A \mapsto \si \o A,
    \end{equation*}
    is continuous, for all $0< \al<1$.
    \item
      If $A_n \to A$ in $C^{2d}(U,M_{D,d}(\C))$ as $n \to \infty$, 
      then over each relatively compact open subset $U_0 \Subset U$ 
      the surface area of the graph of $\si_j(A_n)$ converges to the  surface area of the graph of $\si_j(A)$ 
      as $n \to \infty$, for each $1 \le j \le d$.
    \end{enumerate}
\end{corollary}

\begin{proof}
    Similarly as in the proof of \Cref{c:eigmap}, we have, for each $r \in \R$, a 
    continuous map $C^{2d}(U,M_{D,d}(\C)) \to C^{2d}(U,\on{Hyp}(d))$, $A \mapsto P_{A,r}$, 
    which can be used to reduce the statements of the corollary to the corresponding one for 
    hyperbolic polynomials.
\end{proof}

Observing that the Hermitian matrix 
\[
   \mathbf A:= \begin{pmatrix}
        0 & \tilde A
        \\
        \tilde A^* & 0
    \end{pmatrix},
\]
where $\tilde A$ is the $D \times D$ matrix resulting from $A$ by adding $D-d$ columns consisting of zeros, has the eigenvalues 
\[
    \si_1(A), \ldots,\si_d(A), 0,\ldots,0, -\si_d(A),\ldots,-\si_1(A),
\]
we conclude from \eqref{eq:Weyl} that, for $A,B \in M_{D,d}(\C)$ and $1 \le i \le d$, 
\begin{align*}
    |\si_i(A) - \si_i(B)| &\le \|\mathbf A - \mathbf B\| \le \|\mathbf A - \mathbf B\|_2 
    = |\on{tr}\big( (\mathbf A - \mathbf B)^*(\mathbf A - \mathbf B)\big)|^{1/2}
    \\
                          &= |2\, \on{tr}\big( (\tilde A - \tilde B)^*(\tilde  A - \tilde B)\big)|^{1/2} 
                         = \sqrt 2 \, \|A-B\|_2.
\end{align*}
Consequently, the map
\[
    \si_* : C^{0,1}(U,M_{D,d}(\C)) \to C^{0,1}(U,\R^d)
\]
is well-defined and bounded. 
So, in analogy to \Cref{q:AC1}, it is thus natural to ask whether in \Cref{c:singval} one can replace the assumption $C^{2d}$ by $C^1$.

\section{Restricted multiplicity} \label{sec:rm}

In this section we prove a refinement of \Cref{thm:mainhyp} which accounts for the case that the 
maximal multiplicity of the roots is smaller than the degree. 

First we recall the following version of Bronshtein's theorem. 

\begin{theorem}[{\cite[Theorem 2.1]{ParusinskiRainerHyp}}] \label[t]{t:Brm}
    Let $I \subseteq \R$ be an open interval 
    and $\aa \in C^{p-1,1}(I,\on{Hyp}(d))$, where 
    $p$ is the maximal multiplicity of the roots of $P_{\aa(x)}$, for $x \in I$.
    Then any continuous root $\la \in C^0(I)$ of $P_{\aa}$ is locally Lipschitz. 

    If $p=d$, then we have the bound \eqref{eq:Lipconst}.
  
    Assume $p<d$ and suppose that $P_{\tilde \aa}$ is in Tschirnhausen form. 
    Let $\la_1^{\uparrow}(x) \le \cdots \le \la_d^{\uparrow}(x)$ be the increasingly ordered 
    roots of $P_{\tilde \aa(x)}$, for $x \in I$, and consider
    \begin{equation} \label{eq:alpha}
        \al(x) := \frac{|\la_d^{\uparrow}(x)-\la_1^\uparrow(x)|}{\min_{1 \le i \le d-p}|\la_{i+p}^\uparrow(x)-\la_i^\uparrow(x)|} 
        \quad \text{ and } \quad \al_I := \sup_{x \in I} \al(x).
    \end{equation}
    Then each continuous root $\la$ of $P_{\tilde a}$ satisfies, for any pair of relatively compact 
    open intervals $I_0 \Subset I_1 \Subset I$,
    \begin{align*}
        |\la|_{C^{0,1}(\ol I_0)} &\le C(d)\, \al_{I_1}^{\frac{d-p}{p}}\, 
        \max\Big\{ \de^{-1} \|\tilde a_2\|^{1/2}_{L^\infty(I_1)}, |\tilde a_2'|_{C^{0,1}(\ol I_1)}^{1/2}, 
        \\
                                 & \hspace{3.2cm} \max_{i \le p} \Big( |\tilde a_i^{(p-1)}|_{C^{0,1}(\ol I_1)} \|\tilde a_2\|^{\frac{p-i}{2}}_{L^\infty(I_1)}\Big)^{1/p},
                                 \\
                                 & \hspace{3.2cm} \max_{i > p} \Big( |\tilde a_i^{(p-1)}|_{C^{0,1}(\ol I_1)} \big(\min_{x \in \ol I_0} |\tilde a_2(x)|\big)^{\frac{p-i}{2}}\Big)^{1/p} \Big\},
    \end{align*}
where $\de := \on{dist}(I_0, \R \setminus I_1)$.
\end{theorem}

The next theorem generalizes \Cref{thm:mainhyp}.

\begin{theorem} \label[t]{t:rm}
    Let $I \subseteq \R$ be an open interval.
    Let $\aa_n \to \aa$ in $C^p (I, \on{Hyp}(d))$ as $n \to \infty$,
    where $p$ is the maximal multiplicity of the roots of $P_{\aa(x)}$, for $x \in I$.
    If $p<d$ assume that, for each relatively compact open $I_1 \Subset I$, 
    \begin{equation*}
    \al_{I_1} < \infty.
    \end{equation*}
    Then $\{\sol(\aa_n) : n \ge 1\}$ is a bounded set in $C^{0,1}(I, \R^d)$ 
    and, for each relatively compact open interval $I_0 \Subset I$ and each $1 \le q < \infty$,
    \begin{equation*} 
        \|\sol(\aa) -  \sol(\aa_n)\|_{W^{1,q}(I_0,\R^d)}  \to 0 \quad \text{ as } n\to \infty. 
    \end{equation*}
\end{theorem}

\begin{proof}
    In view of \Cref{thm:mainhyp}, we may assume that $p< d$. Furthermore, we may assume that all polynomials are 
    in Tschirnhausen form.
    
    We first observe that, for each relatively compact open $I_1 \Subset I$, 
    \begin{equation} \label{eq:uc}
        \|\sol(\tilde \aa) - \sol(\tilde \aa_n)\|_{L^\infty(I_1,\R^d)} \to 0 \quad \text{ as } n \to \infty,
    \end{equation}
    as a consequence of \cite[Corollary 6.5]{Parusinski:2024ab} and \Cref{lem:unordered}. 
    (For this it is actually enough that $\tilde \aa_n \to \tilde \aa$ in $C^0(\ol I_1,\on{Hyp}_T(d))$ 
    as $n \to \infty$.)

    Fix relatively compact open subintervals $I_0 \Subset I_1 \Subset I$.
    Then there exists $n_0 \ge 1$ such that for all $n \ge n_0$ 
    the maximal multiplicity of the roots of $P_{\tilde \aa_n}$ on $I_1$ is at most $p$.
    (If not this is violated on a sequence $x_{n_k}$ in $I_1$, leading to a contradiction at an 
    accumulation point of this sequence in $\ol I_1$, since $\la^\uparrow : \on{Hyp}_T(d) \to \R^d$ is continuous.)

    Consequently, the functions $\al_n : I_1 \to \R$
    associated to $P_{\tilde \aa_n}$ as in \eqref{eq:alpha} are well-defined, for all $n \ge n_0$. 
    By the assumption $\al_{I_1} < \infty$ and \eqref{eq:uc}, 
    $\al_n \to \al$ uniformly on $I_1$ as $n \to \infty$ and thus the sequence 
    $\al_{n,I_1} := \sup_{x \in I_1} \al_n(x)$ is bounded.

    Hence, by \Cref{t:Brm}, the derivative of $\sol(\aa_n)$ exists almost everywhere in $I_0$ and 
    is uniformly bounded on $I_0$ by a constant independent of $n$.

    By \Cref{lem:splitting} and \Cref{prop:lefttr}, 
    we can split $P_{\tilde \aa}$ and $P_{\tilde \aa_n}$, for large $n$,  
    locally in factors of degrees at most $p$ in a simultaneous way. This allows us to apply \Cref{thm:ptwhyp} 
    in the case $d=p$ and conclude that, for almost every $x \in I_0$, 
    \[
        \sol(\aa_n)'(x) \to \sol(\aa)'(x) \quad \text{ as } n \to \infty.
    \]
    Now it suffice to invoke the dominated convergence theorem to finish the proof.
\end{proof}

\subsection*{Acknowledgement}

We are grateful to Antonio Lerario for posing the question about the continuity of the solution map.

This research was funded 
in part by the Austrian Science Fund (FWF) DOI 10.55776/P32905 and DOI 10.55776/PAT1381823.
For open access purposes, the authors have applied a CC BY public copyright license to any author-accepted manuscript version arising from this submission.


\def\cprime{$'$}
\providecommand{\bysame}{\leavevmode\hbox to3em{\hrulefill}\thinspace}
\providecommand{\MR}{\relax\ifhmode\unskip\space\fi MR }
\providecommand{\MRhref}[2]{%
  \href{http://www.ams.org/mathscinet-getitem?mr=#1}{#2}
}
\providecommand{\href}[2]{#2}

\end{document}